\newcommand{\be}{\begin{eqnarray}}
\newcommand{\ee}{\end{eqnarray}}
\newcommand{\by}{\begin{eqnarray*}}
\newcommand{\ey}{\end{eqnarray*}}
\newcommand{\bn}{\begin{enumerate}}
\newcommand{\en}{\end{enumerate}}
\newcommand{\bi}{\begin{itemize}}
\newcommand{\ei}{\end{itemize}}
\newtheorem{theorem}{Theorem}[section]
\newtheorem{lemma}{Lemma}[section]
\newtheorem{remark}{Remark}[section]
\newtheorem{defin}{Definition}[section]
\newtheorem{proposition}{Proposition}[section]
\newcommand \mF {\mathcal{F}}
\newcommand \mD {\mathcal{D}}
\newcommand \mO {\mathcal{O}}
\newcommand \mR {\mathcal{R}}
\newcommand \mRn {\mR_n}
\newcommand \fR {\mathfrak{R}}
\newcommand \mC {\mathcal{C}}
\newcommand \mL {\mathcal{L}}
\newcommand \mLn {\mL_n}
\newcommand \pr {\mathbb{P}}
\newcommand \E {\mathbb{E}}
\newcommand \En {\E_n}
\newcommand \R {\mathbb{R}}
\newcommand \N {\mathbb{N}}
\newcommand \la {\lambda}
\newcommand \alp {\alpha}
\newcommand \tet {\theta}
\newcommand \kap {\kappa}
\newcommand \del {\delta}
\newcommand \ome {\omega}
\newcommand \eps {\varepsilon}
\newcommand \hR {\hat R}
\newcommand \tauab {s_{\alp b}}
\newcommand \Sy {S_Y}
\newcommand \Fy {F_Y}
\newcommand \fy {f_Y}
\newcommand \hy {h_Y}
\newcommand \My {M_Y}
\newcommand \sqrtn {\sqrt{n}}
\newcommand \lan {\la_n}
\newcommand \Yn {Y_n}
\newcommand \Xn {X_n}
\newcommand \Xnd {X_{n,D}}
\newcommand \Nn {N_n}
\newcommand \tetn {\tet_n}
\newcommand \cn {c_n}
\newcommand \psin {\psi_n}
\newcommand \opsin {\overline{\psi}_n}
\newcommand \opsidn {\overline{\psi}_{D,n}}
\newcommand \hd {h_D}
\newcommand \hn {h_n}
\newcommand \hdn {h_{D,n}}
\newcommand \rhod {\rho_D}
\newcommand \psid {\psi_D}
\newcommand \psidn {\psi_{D,n}}
\newcommand \rhon {\rho_n}
\newcommand \Rn {R_n}
\newcommand \Rd {R_D}
\newcommand \Rdn {R_{D,n}}
\newcommand \gn {g_n}
\newcommand \kn {k_n}
\newcommand \kapn {\kap_n}
\newcommand \Zd {Z_d}
\newcommand \un {u_n}
\newcommand \elln {\ell_n}
\numberwithin{equation}{section}
\begin{document}

\title{Optimal Reinsurance to Minimize the Probability of Drawdown Under the Mean-Variance Premium Principle: \break Asymptotic Analysis}

\author{Pablo Azcue%
\thanks{Departamento de Matematicas, Universidad Torcuato Di Tella. Av. Figueroa Alcorta 7350 (C1428BIJ) Ciudad de Buenos Aires, Argentina, pazcue@utdt.edu.}
\and Xiaoqing Liang%
\thanks{Corresponding author. Department of Statistics, School of Sciences, Hebei University of Technology, Tianjin 300401, P.\ R.\ China, liangxiaoqing115@hotmail.com. X.\ Liang thanks the National Natural Science Foundation of China (11701139) and the Natural Science Foundation of Hebei Province (A2018202057, A2020202033) and the Research Foundation for Returned Scholars of Hebei Province (C20200102) for financial support.}
\and Nora Muler%
\thanks{Departamento de Matematicas, Universidad Torcuato Di Tella. Av. Figueroa Alcorta 7350 (C1428BIJ) Ciudad de
Buenos Aires, Argentina, nmuler@utdt.edu.}
\and Virginia R. Young%
\thanks{Department of Mathematics, University of Michigan, Ann Arbor, Michigan, 48109, vryoung@umich.edu. V.\ R.\ Young thanks the Cecil J. and Ethel M. Nesbitt Professorship of Actuarial Mathematics for financial support.}
}

\maketitle

\date{}

\centerline{{\bf Abstract}}

\medskip

In this paper, we consider an optimal reinsurance problem to minimize the probability of drawdown for the scaled Cram\'er-Lundberg risk model when the reinsurance premium is computed according to the mean-variance premium principle.  We extend the work of Liang et al.\ \cite{LLY2020} to the case of minimizing the probability of drawdown.  By using the comparison method and the tool of adjustment coefficients, we show that the minimum probability of drawdown for the scaled classical risk model converges to the minimum probability for its diffusion approximation, and the rate of convergence is of order $\mO(n^{-1/2})$.  We further show that using the optimal strategy from the diffusion approximation in the scaled classical risk model is $\mO(n^{-1/2})$-optimal.

\medskip

\noindent{\bf Keywords:}   Optimal reinsurance; probability of drawdown; scaled Cram\'er-Lundberg model; asymptotic analysis; diffusion approximation.

\medskip

{\bf AMS 2020 Subject Classification.}   91G05, 93E20, 35B51, 47G20.

\medskip

{\bf JEL Codes.}  C61, G22, D81.

\section{Introduction}

Drawdown occurs when the value of a portfolio or company's surplus is lower than a proportion of its historic maximum, which is an important risk metric for fund or corporate managers when evaluating their portfolios or companies, respectively. The frequent occurrence of drawdown implies low financial profit, potential large losses, or even bankruptcy, which may help managers take effective action for the company's operation.

Due to the importance of analyzing the occurrence of drawdown, recently researchers have been considering the problem of minimizing the probability of drawdown.  Specifically, a decision maker chooses an optimal strategy to minimize the probability that the fund or surplus decreases to a fixed proportion, say $\alp \in [0,1)$, of its historic maximum value.  If we set $\alp = 0$, then minimizing the probability of drawdown degenerates to minimizing the probability of ruin.  Angoshtari et al.\ \cite{ABY16a} minimize the probability of drawdown with investment in a Black-Scholes financial market. They find that the strategy for minimizing the probability of drawdown is identical to the one for minimizing the probability of ruin.  Han et al.\ \cite{HLY2020} consider the reinsurance problem of minimizing the probability of drawdown under the mean-variance premium principle and under the diffusion approximation of the classical Cram\'er-Lundberg risk model. They find an explicit form of the optimal reinsurance strategy and show that the optimal reinsurance strategy for minimizing the probability of drawdown coincides with the one for minimizing the probability of ruin, as in Angoshtari et al.\ \cite{ABY16a}.  There is also research related to optimization problems under drawdown from the perspective of individual investors; see, for example, Grossman and Zhou \cite{GZ1993}, Cvitani\'c and Karatzas \cite{CK1995}, Elie and Touzi \cite{ET2008}, Chen et al.\ \cite{CLLL15}, and Angoshtari et al.\ \cite{ABY16b}.

There are many papers that employ the diffusion approximation in the actuarial science literature. By comparison, research on optimization problems with jump-diffusion models or classical Cram\'er-Lundberg (CL) models are less.  The reason is that explicit solutions for the latter models are difficult to derive.  Some researchers use probabilistic techniques to verify the convergent relationship between the probability of ruin for the scaled model and that of its corresponding diffusion approximation; see, for example, Iglehart \cite{I1969}, Grandell \cite{G1977}, Asmussen \cite{A1984}, and B\"auerle \cite{B2004}.  More recently, Cohen and Young \cite{CY2020} use the comparison method from differential equations to find upper and lower bounds of the probability of ruin and prove that the probability of ruin for the scaled CL model converges to the probability of ruin for the corresponding diffusion approximation. Liang et al.\ \cite{LLY2020} consider the problem of minimizing the probability of ruin with reinsurance under the mean-variance premium principle. They show that the minimum probability of ruin is the unique viscosity solution of the corresponding Hamilton-Jacobi-Bellman equation with boundary conditions. They prove that, under appropriate scaling of the CL risk model, the probability of ruin converges to its diffusion approximation.  Liang and Young \cite{LY2021} extend Cohen and Young \cite{CY2020} to the exponential Parisian ruin model.  Cohen and Young \cite{CY2021} prove asymptotic results for the optimal-dividend problem.

In this paper, we analyze the relationship between the probability of drawdown for the scaled classical CL risk model and its corresponding diffusion approximation.  We first prove two ``smooth'' comparison results. Then, we use the comparison results and the analogs of adjustment coefficients to prove that the minimum probability of drawdown for the scaled classical risk model converges to the minimum probability for its diffusion approximation uniformly, with rate of convergence of order $\mO(n^{-1/2})$.  To justify using the diffusion approximation in this optimization problem, we show that using the optimal retention strategy from the diffusion approximation is $\mO(n^{-1/2})$-optimal in the scaled classical CL risk model. 

The rest of the paper is organized as follows.  In Section \ref{sec:CL}, we present the classical CL model for the insurer's surplus, we describe the reinsurance market, and we define the minimum probability of drawdown for the classical risk process.  Then, in Section \ref{sec:compare}, we provide two comparison theorems that form the backbone of the proofs in Section \ref{sec:bounds}.  In Section \ref{sec:scale_CL}, we define the scaled model and discuss how to extend the comparison theorems to this model.  Section \ref{sec:diff} provides an explicit expression for the minimum probability of drawdown for the diffusion approximation of the scaled classical risk model, and Section \ref{sec:outline} gives an outline of the remainder of the paper.

In Sections \ref{sec:adj_coeff_CL} and \ref{sec:adj_coeff_diff}, we define analogs of the adjustment coefficient for the scaled classical risk model and for its diffusion approximation, respectively, and in Section \ref{sec:relation}, we prove two lemmas that show how these adjustment coefficients are related.  In Section \ref{sec:bounds}, we use the comparison lemma from Section \ref{sec:compare} to modify the minimum probability of drawdown for the diffusion approximation by functions of order $\mO(n^{-1/2})$ to obtain upper and lower bounds of the corresponding probability of drawdown for the scaled classical risk model.  In Section \ref{sec:psin_lim}, we use these bounds to prove that the minimum probability of drawdown for the scaled classical risk model converges to that of its diffusion approximation.  We show that the rate of convergence is of order $\mO(n^{-1/2})$, uniformly with respect to the surplus.  Finally, in Section \ref{sec:Rd_opt}, we prove the main result of our paper, namely, that if the insurer uses the optimal strategy from the diffusion approximation, then the resulting probability of drawdown is $\mO(n^{-1/2})$-optimal.  This result thereby justifies using the diffusion approximation when minimizing the probability of drawdown.

\section{Scaled Cram\'er-Lundberg model and its diffusion approximation}

\subsection{Cram\'er-Lundberg model and the probability of drawdown}\label{sec:CL}

In this section, we describe the reinsurance market available to the insurance company, and we formulate the problem of minimizing the probability of drawdown.  Assume that all random processes exist on the filtered probability space $\big( \Omega, \mathcal{F}, \mathbb{F} = \{ \mF_t \}_{t \ge 0}, \mathbb{P} \big)$.

We model the insurer's claim process $C = \{C_t\}_{t \ge 0}$ according to a compound Poisson process, namely,
\begin{equation}
\label{eq:CPP}
C_t = \sum_{i = 1}^{N_t} Y_i,
\end{equation}
in which the claim severities $Y_1, Y_2, \dots$ are independent and identically distributed according to a common cumulative distribution function $\Fy$, with $\Fy(0) = 0$, and in which the claim frequency $N = \{ N_t \}_{t \ge 0}$ follows a Poisson process with parameter $\la > 0$.  Let $\Sy = 1 - \Fy$ denote the survival function of the severity random variable $Y$, and assume that $Y$'s moment generating function $\My$ is finite in a neighborhood of $0$.  Also, assume that the insurer receives premium payable continuously at a rate $c > \la \E Y$, and assume that the Poisson process $N$ is independent of the claim severity process $\{Y_i\}_{i \in \N}$.

\begin{remark}\label{rem:mgf}
From Lemma $2.3.1$ in Rolski et al.\ {\rm \cite{RSST1999}}, we know that, because $\My(t_0) < \infty$ for some $t_0 > 0$, then there exists $b > 0$ such that 
\begin{equation}\label{eq:exp_Sy}
\Sy(y) \le b e^{-t_0 y}, \qquad \forall y \ge 0.
\end{equation}
In other words, if $\My$ is finite in a neighborhood of $0$, then $\Sy$ has an exponentially decreasing right tail.  Conversely, if \eqref{eq:exp_Sy} holds, that is, if $\Sy$ has an exponentially decreasing right tail, then $\My(t) < \infty$ for all $t < t_0$.  \qed
\end{remark}

We assume that the insurer can buy per-loss reinsurance, with a continuously payable premium computed according to the so-called \textit{mean-variance} premium principle, which combines the expected-value and variance premium principles, with risk loadings $\tet$ and $\eta$, respectively.  Specifically, if $R_t(\omega, y)$ represents the retained claim at time $t \ge 0$, as a function of the (possible) claim $Y = y$ at that time and state of the world $\omega \in \Omega$, then reinsurance indemnifies the insurer by the amount $y - R_t(\omega, y)$ if there is a claim $y$ at time $t \ge 0$ and $\omega \in \Omega$, and the time-$t$ premium rate is given by
\begin{equation}\label{eq:reins_prem}
(1 + \tet) \la \E(Y - R_t) + \dfrac{\eta}{2} \, \la \E((Y - R_t)^2).
\end{equation}
Assume that
\begin{equation}\label{eq:c_not_huge}
c < (1 + \tet) \la \E Y + \dfrac{\eta}{2} \, \la \E(Y^2);
\end{equation}
in words, the insurer's premium income is not sufficient to buy full reinsurance, and let $\kap$ denote the positive difference
\begin{equation}\label{eq:kap}
\kap = (1 + \tet) \la \E Y + \dfrac{\eta}{2} \, \la \E(Y^2) - c.
\end{equation}

\begin{defin}\label{def:adm}
A strategy $\mR = \{ R_t \}_{t \ge 0}$ is an {\rm admissible retention strategy} if it satisfies the following properties:
\begin{itemize}

\item[$(i)$] $\mR$ is predictable; that is, the function $(\omega, y) \mapsto R_t(\omega, y)$ is $\mF_{t^-} \times \mathcal{B}(\R_+)$-measurable for every $t \ge 0$, in which $\mathcal{B}(\R_+)$ denotes the Borel $\sigma$-algebra on $\R_+$.


\item[$(ii)$] $0 \le R_t(\omega, y) \le y$, for all $t \ge 0$ and $\omega \in \Omega$.

\item[$(iii)$] The net premium of the controlled surplus is greater than the expected rate of claim payment, that is, 
\begin{equation}\label{eq:net_prof}
c - (1 + \tet) \la \E(Y - R_t) - \dfrac{\eta}{2} \, \la \E((Y - R_t)^2) > \la \E R_t ,
\end{equation}
with probability one for all $t \ge 0$.  Hald and Schmidli {\rm \cite{HS2004}} and Liang and Guo {\rm \cite{LG2007}}, among others,
refer to inequality \eqref{eq:net_prof} as the {\rm net-profit condition}.

\item[$(iv)$] $\mR$ is progressively measurable; that is, the function $(\omega, y, t) \mapsto R_t(\omega, y)$ is $\mF_t \times \mathcal{B}(\R_+) \times \mathcal{B}(\R_+)$-measurable.
\end{itemize}
When initial surplus equals $x$, denote the set of admissible strategies by $\fR_x$.

Moreover, a function $R: \R_+ \to \R_+$ is an {\rm admissible retention function} if
\begin{itemize}
\item[$(i)$] The mapping $y \mapsto R(y)$ is $\mathcal{B}(\R_+)$-measurable.

\item[$(ii)$] $0 \le R(y) \le y$, for all $y \ge 0$.

\item[$(iii)$] $c - (1 + \tet) \la \E(Y - R) - \dfrac{\eta}{2} \, \la \E((Y - R)^2) > \la \E R$.
\end{itemize}
In other words, a function $R$ is an admissible retention function if the constant strategy $\mR = \{R_t \equiv R \}$ is an admissible retention strategy.  \qed
\end{defin}
Given a retention strategy $\mR \in \fR_x$, the insurer's surplus follows the dynamics
\begin{align}
\label{eq:X}
dX^\mR_t &= \left( c - (1 + \tet) \la \E(Y - R_t) - \dfrac{\eta}{2} \, \la \E((Y - R_t)^2) \right)dt - R_t dN_t \notag \\
&= \left( - \kap + \la \left( (1 + \tet) \E R_t + \eta \E(Y R_t) - \dfrac{\eta}{2} \, \E(R^2_t) \right) \right)dt - R_t dN_t,
\end{align}
with $X^\mR_{0^-} = x \ge 0$.\footnote{We assume that no claim occurs at time $0$, which is true with probability 1, so we also have $X^\mR_{0} = x \ge 0$.}  Define the corresponding maximum surplus process $M^\mR = \big\{ M^\mR_t \big\}_{t \ge 0}$ by
\begin{equation}\label{eq:M}
M^\mR_t = \max \bigg\{ \sup \limits_{0 \le s \le t} X^\mR_s, \, M^\mR_{0^-} \bigg\},
\end{equation}
with $M^\mR_0 = m \ge x$.  We allow the surplus process to have a financial past, as embodied by the term $M^\mR_{0^-}$ in \eqref{eq:M}.  Drawdown is the time when the surplus process drops below the proportion $\alp \in [0,1)$ of its maximum value, that is, at the hitting time $\tau^\mR_\alp$ given by
\begin{equation}\label{eq:tau_alp}
\tau^\mR_\alp = \inf \Big\{t \ge 0:\ X^\mR_t < \alp M^\mR_t \Big \}.
\end{equation}
If $\alp = 0$, then the time or event of drawdown is the same as that for ruin under the ruin level $0$.

The goal of the insurer is to minimize its probability of drawdown by purchasing per-loss reinsurance.  The corresponding minimum probability of drawdown $\psi$ is defined by
\begin{equation}\label{eq:phi}
\psi(x, m) = \inf \limits_{\mR \in \fR_x} \pr^{x, m} \big(\tau^\mR_\alp < \infty \big) = \inf \limits_{\mR \in \fR_x} \E^{x, m} \big({\bf 1}_{\{\tau^\mR_\alp < \infty\}} \big),
\end{equation}
in which $\pr^{x, m}$ and $\E^{x,m}$ denote the probability and expectation, respectively, conditional on $X_0 = x$ and $M_0 = m$.   Note that, if $x < \alp m$, then $\psi(x, m) = 1$.  It remains for us to study the minimum probability of drawdown $\psi$ on the domain
\begin{equation} \label{eq:mD}
\mD = \big\{(x, m) \in \R_+^2: \alp m \le x \le m \big\}.
\end{equation}
In the following proposition, we prove some interesting properties of $\psi$.

\begin{proposition}
$\psi(x, \cdot)$ is nondecreasing on $\mD$, and $\psi(\cdot, m)$ is nonincreasing and Lipschitz on $\mD$.  Moreover,
\begin{equation}\label{eq:psi_lim}
\lim_{m \to \infty} \psi(m, m) = 0.
\end{equation}
\end{proposition}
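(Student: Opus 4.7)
I would prove the four claims in the order they appear.  For the two monotonicity statements, I would couple two copies of the controlled surplus driven by the same admissible $\mR \in \fR_x$.  If $m_1 \le m_2$, the $X$-process does not depend on the initial maximum, so $M^{\mR,m_1}_t \le M^{\mR,m_2}_t$ for all $t$; consequently $\tau^{\mR,m_2}_\alp \le \tau^{\mR,m_1}_\alp$, and taking the infimum over $\mR$ yields $\psi(x,m_1) \le \psi(x,m_2)$.  For $x_1 < x_2$ with fixed $m$, the same $\mR$ produces $X^{\mR,x_2}_t = X^{\mR,x_1}_t + (x_2-x_1)$, and a brief case analysis on whether $\sup_{s \le t} X^{\mR,x_1}_s$ falls in $[0,m-(x_2-x_1)]$, $(m-(x_2-x_1),m]$, or $(m,\infty)$ gives
\begin{equation*}
X^{\mR,x_2}_t - \alp M^{\mR,x_2,m}_t \ge X^{\mR,x_1}_t - \alp M^{\mR,x_1,m}_t + (1-\alp)(x_2-x_1),
\end{equation*}
so drawdown cannot occur earlier for $x_2$, giving $\psi(x_2,m) \le \psi(x_1,m)$.

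For Lipschitz continuity I would splice a deterministic bridge onto an $\epsilon$-optimal tail strategy.  Let $\bar R(y) = y$ (full retention, admissible by $c > \la \E Y$), and set $c_0 := c - \la \E Y > 0$.  For $\alp m \le x_1 < x_2 \le m$ with $\delta := x_2 - x_1$, apply $\bar R$ to $(x_1,m)$ until either a claim arrives or, absent a claim, the surplus reaches $x_2$ at the deterministic time $\delta/c_0$; on the no-claim event, switch to an $\epsilon$-optimal strategy from $(x_2, m)$ by the strong Markov property.  The bridge keeps the surplus in $[x_1, x_2] \subset [\alp m, m]$, so drawdown cannot occur during it and the running maximum stays at $m$.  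Therefore
\begin{equation*}
\psi(x_1,m) \le \bigl(1 - e^{-\la \delta/c_0}\bigr) + e^{-\la \delta/c_0}\bigl(\psi(x_2,m) + \epsilon\bigr),
\end{equation*}
and letting $\epsilon \downarrow 0$ together with $1 - e^{-u} \le u$ yields $\psi(x_1,m) - \psi(x_2,m) \le (\la/c_0)\,\delta$.

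For the limit I would again take $\bar R(y) = y$ and use its Lundberg adjustment coefficient $R^* > 0$, which exists because $\My$ is finite in a neighborhood of $0$ by Remark~\ref{rem:mgf}; this yields the standard bound $\pr^x(\inf_t X_t \le L) \le e^{-R^*(x - L)}$ for $x > L$.  Choose $\bet \in (1, 1/\alp)$, so that $\gam := 1 - \alp \bet > 0$, and decompose the drawdown event by the ``phase'' $k \ge 0$ in which $M_t \in [m\bet^k, m\bet^{k+1})$ at the drawdown time.  Drawdown in phase $k$ forces $X_t \le \alp m \bet^{k+1} < m\bet^k$; applying the strong Markov property at $T_k := \inf\{t : X_t \ge m\bet^k\}$ (where $X_{T_k} = M_{T_k} = m\bet^k$, since the surplus grows continuously between claims), Lundberg bounds this phase's contribution by $e^{-R^* m \bet^k \gam}$.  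Summing,
\begin{equation*}
\psi(m,m) \le \sum_{k \ge 0} \exp(-R^* m \bet^k \gam),
\end{equation*}
which tends to zero as $m \to \infty$ with leading term $e^{-R^* m \gam}$.  The main obstacle is producing the Lundberg estimate for the constant strategy $\bar R$, which the paper develops in Section~\ref{sec:adj_coeff_CL}; for $\alp = 0$ the partition collapses and the claim reduces directly to the classical Cram\'er--Lundberg ruin estimate.
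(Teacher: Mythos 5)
Your overall structure tracks the paper's, and all four parts are correct, aside from one minor arithmetic slip. The two monotonicity claims follow from the same pathwise considerations the paper invokes, only you spell out the coupling that the paper leaves implicit. The Lipschitz argument is also the paper's own: splice a no-claim bridge run under a constant strategy onto an $\eps$-optimal tail via the strong Markov property. The slip is in the constant: for $\bar R(y)=y$, the drift of $X$ between claims is $p_{\bar R}=c-(1+\tet)\la\E(Y-\bar R)-\tfrac{\eta}{2}\la\E\bigl((Y-\bar R)^2\bigr)=c$, not the net rate $c_0=c-\la\E Y$; the $\la\E Y$ correction appears only after averaging in the downward jumps, which you have excluded on the no-claim event. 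So the bridge reaches $x_2$ at time $\del/c$, stays in $[x_1,x_2]$, and the Lipschitz constant sharpens to $\la/c$, which matches the paper's $\la/\sup_R p_R$ (one checks $\sup_R p_R = p_Y = c$). With $c_0$ in place of $c$ the bridge would overshoot $x_2$ and, for $\del$ close to $m-x_1$, could even push $X$ above $m$ and change the running maximum; the clean statement uses $\del/c$.

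For the limit you take a genuinely different route. The paper fixes a constant retention $R$, defines a drawdown-adapted adjustment coefficient $J$ as the positive root of $\la\bigl(\E e^{JR}-1-J\E R\bigr)=J\bigl[p_R-\la\E R\bigr]$, and shows by induction on the number of claims that $\psi_k(x,m;\mR)\le e^{-J(x-\alp m)}$, hence $\psi(m,m;\mR)\le e^{-J(1-\alp)m}\to 0$. You instead keep the classical Lundberg first-passage bound for a fixed level, with no drawdown-specific Lundberg object, and compensate by decomposing the drawdown event according to which geometric band $[m\bet^k,m\bet^{k+1})$ contains $M_{\tau_\alp}$; the strong Markov property at $T_k$, where $X_{T_k}=M_{T_k}=m\bet^k$ by upward continuity of the controlled path, reduces each phase to an ordinary first-passage estimate from $m\bet^k$ down to $\alp m\bet^{k+1}$, and the constraint $\alp\bet<1$ makes the gap $m\bet^k\gam$ grow with $k$ so that $\sum_k e^{-R^* m\gam\bet^k}\to 0$. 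Both give exponential decay. The paper obtains the single, slightly sharper exponent $J(1-\alp)m$ (and for $\bar R(y)=y$ one can verify that $J$ equals the classical adjustment coefficient $R^*$, so the paper's exponent is $R^*(1-\alp)m$, whereas yours is $R^*\gam m$ with $\gam<1-\alp$ but $\gam$ can be taken as close to $1-\alp$ as desired). What your route buys is that it rests only on the textbook Lundberg ruin estimate plus a partition of the maximum, rather than on a drawdown-specific recursion over claim numbers.
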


\begin{proof}
For a fixed value of $x$, if $(x \le) \; m_1 < m_2$, then the ruin levels are ordered, that is, $\alp m_1 < \alp m_2$, which implies that the probability of drawdown under ruin level $\alp m_1$ is less than or equal to the probability of drawdown under $\alp m_2$.  In other words, $\psi(x, \cdot)$ is nondecreasing on $\mD$.

For a fixed value of $m$, if $\alp m \le x_1 < x_2 \le m$, then clearly the probability of drawdown when surplus equals $x_1$ is greater than or equal to the probability of drawdown when surplus equals $x_2$.  In other words, $\psi(\cdot, m)$ is nonincreasing on $\mD$.  It remains to show that  $\psi(\cdot, m)$ is Lipschitz on $\mD$.

For a fixed value of $m$, suppose $\alp m \le x_1 < x_2 \le m$.  Choose an admissible retention function $\hat{R}$, and form the constant admissible retention strategy $\hat\mR = \{\hat{R}\}_{t\ge 0} \in \fR_{x_1}$.  Given $\eps > 0$, let $\bar{\mR}_1 = \{R^1_t\}_{t \ge 0} \in \fR_{x_2}$ be such that
\begin{align*}
\psi(x_2, m; \bar{\mR}_1) \le \psi(x_2,m) + \eps,
\end{align*}
in which $\psi(\cdot, \, \cdot; \mR)$ denotes the probability of drawdown when the insurer follows strategy $\mR$.

Now, define $\bar{\mR}=\{R_t\}_{t \ge 0} \in \fR_{x_1}$ as follows:
\[
R_t =
\begin{cases}
\hat{R}, &\quad t\le \tau_{x_2} := \inf \{t\ge0: X^{\bar{\mR}}_t=x_2\}, \\
R^{1}_{t- \tau_{x_2}}, &\quad t>\tau_{x_2}.
\end{cases}
\]
If no claims occur, then the process $X^{\bar{\mR}}$ with initial surplus $x_1$ reaches $x_2$ at time $h = (x_2 - x_1)/p_{\hat{R}}$, in which
\begin{equation}\label{eq:pR}
p_R = c - (1 + \tet) \la \E(Y - R) - \frac{\eta}{2} \la \E((Y - R)^2).
\end{equation}
Note that condition (iii) in Definition \ref{def:adm} implies $p_R > \la \E R$ for any admissible retention function $R$.  Thus, if we let $\tau_1$ denote the time of the first claim when initial surplus equals $x_1$, we have
\begin{align*}
1 - \psi(x_1, m) &\ge 1- \psi(x_1, m; \bar{\mR}) \notag \\
&\ge \big(1 - \psi(x_2, m; \bar{\mR}) \big) \pr(\tau_1 > h) \notag \\
&= \big(1 - \psi(x_2, m; \bar{\mR}) \big) e^{-\la h} \notag \\
&\ge (1-\psi(x_2,m)- \eps) e^{- \la h}.
\end{align*}
Because $\eps > 0$ is arbitrary, we deduce
\begin{equation}\label{ineq1}
1 - \psi(x_1, m) \ge (1-\psi(x_2,m)) e^{- \la h}.
\end{equation}
From inequality \eqref{ineq1} and $\psi(\cdot, m)$ nonincreasing, we obtain
\begin{align*}
0 &\le \psi(x_1,m) - \psi(x_2,m) \le (1 - \psi(x_2,m))(1-e^{-\la h})  \\
& \le 1 - e^{-\la h} \le \la h = \dfrac{\la}{p_{\hat R}} \, (x_2 - x_1).
\end{align*}
Because $\hat{R}$ is arbitrary such that $p_{\hat R} > \la \E \hat R$, it follows that
\[
\big| \psi(x_1,m) - \psi(x_2,m) \big| \le \dfrac{\la}{\sup_{R} p_{R}} \, |x_2 - x_1|.
\]
Thus, $\psi$ is Lipschitz in $x$.

It remains to prove the limit in \eqref{eq:psi_lim}.  To that end, for any value $(x, m) \in \mD$, let $R$ be an admissible retention function, and let $\mR = \{ R\}_{t \ge 0} \in \fR$ be the corresponding constant retention strategy.  We wish to prove that
\begin{equation}\label{ineq:psi_mR}
\psi(x, m; \mR) \le e^{-J(x - \alp m)},
\end{equation}
in which $J$ is the positive solution of the following equation:
\begin{equation}\label{eq:J}
\la \big(\E e^{J R} - 1 - J \E R \big) = J \left[ - \kap + \la \left( \tet \E R + \eta \E(YR) - \dfrac{\eta}{2} \, \E(R^2) \right) \right].
\end{equation}
As an aside, to see that \eqref{eq:J} has a unique positive solution, note that condition (iii) of Definition \ref{def:adm} implies the coefficient of $J$ on the right side is positive. The two sides equal $0$ when $J = 0$, and the rates of growth with respect to $J$ when $J = 0$ of the left and right sides equal, respectively, $0$ and the positive coefficient in square brackets.  Thus, because the left side grows exponentially and the right grows linearly with $J$, there is a unique positive solution of \eqref{eq:J}.

Next, let $\psi_k(\cdot, \, \cdot; \mR)$ denote the probability of drawdown (under the strategy $\mR$) at or before the $k^{th}$ claim. For all $(x, m) \in \mD$, $\psi_k(x, m; \mR)$ increases with respect to $k$, and $\lim \limits_{k \to \infty} \psi_k(x, m; \mR) = \psi(x, m; \mR)$.  Indeed, if we let $T_k$ denote the time of the $k^{th}$ claim, then $\psi_k(x, m; \mR) = \pr(\tau_\alp^\mR \le T_k)$, and $\lim \limits_{k \to \infty} T_k = \infty$, which give us
\[
\lim_{k \to \infty} \psi_k(x, m; \mR) = \lim_{k \to \infty} \pr(\tau_\alp^\mR \le T_k) = \pr \big( \cup_{k=1}^\infty \{\tau_\alp^\mR \le T_k \} \big) = \pr(\tau_\alp^\mR < \infty) = \psi(x, m; \mR).
\]
Thus, to prove \eqref{ineq:psi_mR}, it is enough to prove
\begin{equation}\label{ineq:psik_mR}
\psi_k(x, m; \mR) \le e^{-J(x - \alp m)},
\end{equation}
for all $k \in \N$.  By a recursive argument, $\psi_k$ solves
\begin{equation}\label{eq:psik_recursion}
\psi_k(x, m; \mR) = \int_0^\infty \int_0^\infty \psi_{k-1}(x + p_R t - R(y), m; \mR) \la e^{-\la t} d\Fy(y) dt,
\end{equation}
for $k \in \N$, in which
\begin{equation}
\psi_0(x, m; \mR) =
\begin{cases}
0, &\quad x \ge \alp m, \\
1, &\quad x < \alp m.
\end{cases}
\end{equation}
For $(x, m) \in \mD$, $\psi_0(x, m; \mR) = 0 \le e^{-J(x - \alp m)}$.  Assume \eqref{ineq:psik_mR} holds for some $k - 1 = 0, 1, 2, \dots$; then,
\begin{align*}
\psi_k(x, m; \mR) &\le \int_0^\infty \int_0^\infty e^{-J(x + p_R t - R(y) - \alp m)} \la e^{-\la t} d\Fy(y) dt \\
&= e^{-J(x - \alp m)} \int_0^\infty \la e^{J R(y)} d\Fy(y) \cdot \int_0^\infty e^{- (\la + Jp_R)t} dt \\
&= e^{-J(x - \alp m)} (\la + J p_R) \cdot \dfrac{1}{\la + J p_R} = e^{-J(x - \alp m)},
\end{align*}
in which the second equality follows from \eqref{eq:J}.  Thus, inequality \eqref{ineq:psik_mR} follows by recursion, which implies inequality \eqref{ineq:psi_mR}.  From the minimality of $\psi$ and from \eqref{ineq:psi_mR}, we obtain
\[
0 \le \lim_{m \to \infty} \psi(m, m) \le \lim_{m \to \infty} \psi(m, m; \mR) \le \lim_{m \to \infty} e^{-J(1 - \alp)m} = 0,
\]
which proves the limit in \eqref{eq:psi_lim}.
\end{proof}

\subsection{Comparison theorems}\label{sec:compare}

For the classical risk model, we cannot find an explicit expression for the minimum probability of drawdown $\psi$, so in Section \ref{sec:diff_approx}, we prove that the minimum probability of drawdown for the diffusion approximation of $X^\mR$ approximates $\psi$.     To that end, in this section, we prove two comparison theorems which we use in Section \ref{sec:diff_approx}.

We present the following lemma, which we will use in the proofs of Theorems \ref{thm:sub} and \ref{thm:super} below.

\begin{lemma}\label{lem:tauab_fin}
For $(x, m) \in \mD$, fix an admissible retention strategy $\mR \in \fR_x$.  Define $s_b = \inf\{t \ge 0: X^\mR_t \ge b\}$ with $X^\mR_0 = x$, and define $\tauab = \tau_\alp \wedge s_b$.\footnote{These stopping times depend on $\mR$ via $X^\mR$, but for simplicity of notation in the remainder of this section, we suppress the superscript $\mR$ on $\tau_\alp$, $s_b$, and $\tauab$.}  Then, $\pr^{x, m}(\tauab < \infty) = 1$.
\end{lemma}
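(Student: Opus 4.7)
The plan is to prove $\pr^{x,m}(\tauab < \infty) = 1$ by combining two ingredients: a uniform positive lower bound on the drift rate $p_R$ of the surplus process across all admissible retention functions, and the fact that a Poisson-arrival-free interval of a suitable fixed length allows this drift alone to push the surplus above $b$, thereby triggering $\tauab$. Iterating over successive intervals of that length then forces an exit from $[\alp m, b)$ with probability one.

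The key technical step, which I expect to be the main obstacle, is to show that there exists $p_{\min} > 0$ such that $p_R \ge p_{\min}$ for every admissible retention function $R$. I would argue by contradiction: if $\{R_n\}$ is a sequence of admissible retention functions with $p_{R_n} \to 0$, then the net-profit condition (iii) of Definition \ref{def:adm} forces $\la \E R_n < p_{R_n} \to 0$, so $\E R_n \to 0$. Because $0 \le R_n \le Y$ and $\E Y^2 < \infty$ (by Remark \ref{rem:mgf}), the decomposition $\E R_n^2 \le K \E R_n + \E[Y^2 \mathbf{1}_{\{Y > K\}}]$---valid for every $K > 0$, and exploited by first choosing $K$ large to shrink the tail and then $n$ large---yields $\E R_n^2 \to 0$; Cauchy--Schwarz then gives $\E(YR_n) \to 0$. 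Substituting into \eqref{eq:pR} forces $p_{R_n} \to c - (1+\tet)\la \E Y - (\eta/2)\la \E Y^2 = -\kap < 0$, contradicting $p_{R_n} \to 0$. Since Definition \ref{def:adm}(iii) says that $R_t(\omega, \cdot)$ is almost surely an admissible retention function at each time $t$, this uniform bound gives $p_{R_t} \ge p_{\min}$ almost surely for every $t \ge 0$.

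With this bound in hand, set $T = (b - \alp m)/p_{\min}$. Between claims, $X^\mR$ has derivative $p_{R_t} \ge p_{\min}$. On $\{\tauab > kT\}$ we have $X^\mR_{kT} \in [\alp m, b)$, because $X^\mR_{kT} \ge \alp M^\mR_{kT} \ge \alp m$ (drawdown has not yet occurred) and $X^\mR_{kT} < b$ (since $s_b > kT$). If in addition no claim arrives on $[kT, (k+1)T]$, then $X^\mR_{(k+1)T} \ge X^\mR_{kT} + p_{\min} T \ge b$, forcing $\tauab \le (k+1)T$. Because the Poisson increment $N_{(k+1)T} - N_{kT}$ is independent of $\mF_{kT}$ and equals zero with probability $e^{-\la T}$, taking conditional expectation on $\{\tauab > kT\} \in \mF_{kT}$ gives $\pr^{x,m}(\tauab > (k+1)T \mid \tauab > kT) \le 1 - e^{-\la T}$. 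Iterating yields $\pr^{x,m}(\tauab > kT) \le (1 - e^{-\la T})^k \to 0$ as $k \to \infty$, which completes the proof.
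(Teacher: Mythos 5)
Your proof is correct, but it takes a genuinely different, self-contained route. The paper simply invokes Proposition~2.2 of Azcue and Muler (2014), which says that with probability one the surplus either ruins in finite time or diverges to $+\infty$; since $\alp M_t \ge 0$ ruin implies drawdown, and divergence implies $s_b < \infty$, so $\tauab < \infty$ a.s.\ follows in one line. You instead prove the needed dichotomy directly: you establish a uniform lower bound $p_{\min} := \inf_R p_R > 0$ over all admissible retention functions, using a clean compactness-by-contradiction argument (if $p_{R_n} \to 0$ then $\E R_n \to 0$ by the net-profit condition, whence $\E R_n^2 \to 0$ via truncation and $\E(YR_n) \to 0$ via Cauchy--Schwarz, forcing $p_{R_n} \to -\kap < 0$), and then you run a standard geometric-trials argument on claim-free windows of length $T = (b-\alp m)/p_{\min}$, using the independence of Poisson increments from $\mF_{kT}$ to get $\pr(\tauab > kT) \le (1 - e^{-\la T})^k \to 0$. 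The monotonicity observation that $X_t - \alp M_t$ is nondecreasing on claim-free intervals, and the fact that $X_{kT} \ge \alp M_{kT} \ge \alp m$ on $\{\tauab > kT\}$, make the window bookkeeping go through. The trade-off: the paper's proof is shorter but outsources the core probabilistic fact to a reference (whose hypotheses must be checked against the controlled model); yours is longer but elementary, explicitly adapted to the controlled process, and produces the uniform drift bound $p_{\min}>0$ as a reusable by-product. The only mild caveat is the standard measure-theoretic one already latent in the paper's Definition~\ref{def:adm}(iii): "with probability one for all $t$" should be read so that, for a.e.\ $\omega$, $p_{R_t(\omega,\cdot)} \ge p_{\min}$ for Lebesgue-a.e.\ $t$, which is all the drift integral $\int_{kT}^{(k+1)T} p_{R_t}\,dt \ge p_{\min}T$ requires.
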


\begin{proof}
Proposition 2.2 in Azcue and Muler \cite{AM2014} states: with probability one, either ruin occurs in finite time or $X^\mR_t$ diverges to infinity as $t$ goes to infinity.  Because $\alp M_t \ge 0$ and because $b < \infty$, it follows that $\pr^{x, m}(\tauab < \infty) = 1$.
\end{proof}

In the first theorem, we show that any smooth subsolution of our problem is less than or equal to the value function.  Before stating the theorem, we introduce some notation.  For an admissible retention function $R$, define the operator $\mL^R$ on $\mC^{1,1}(\mD)$ as follows:  for $u \in \mC^{1,1}(\mD)$,
\begin{align}
\label{eq:mL}
\mL^R u(x, m) &= - \kap u_x(x, m) \notag \\
&\quad + \la \left[ \left( (1 + \tet) \E R + \eta \E(YR) - \dfrac{\eta}{2} \, \E(R^2) \right) u_x(x,m) + \E u(x - R, m) - u(x, m) \right],
\end{align}
in which we extend $u$ by defining $u(x, m) = 1$ for $x < \alp m$.  Note that the HJB equation for $\psi$ is $\inf_R \mL^R \psi(x, m) = 0$.

\begin{theorem}[Subsolution]\label{thm:sub}
Suppose $u \in \mC^{1,1}(\mD)$ is a bounded function that satisfies the following conditions:
\begin{enumerate}
\item[$(i)$] $\lim \limits_{m \to \infty} u(m, m) \le 0$. 
\item[$(ii)$] $u(x, m)$ is defined for all $x \le m$ and $m \ge 0$, with $u(x, m) = 1$ for all $x < \alp m$.
\item[$(iii)$] $u_m(m, m) \ge 0$ for all $m \ge 0$.
\item[$(iv)$] $\mL^R u(x, m) \ge 0$ for all $(x, m) \in \mD$ and for all admissible retention functions $R$.
\end{enumerate}

\noindent
Then, $u \le \psi$ on $\mD$.
\end{theorem}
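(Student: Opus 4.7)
The plan is to run a verification argument: I would apply an Itô/Dynkin-type change-of-variables formula to $u(X^\mR_t, M^\mR_t)$ along an arbitrary admissible strategy $\mR \in \fR_x$, stop at $\tauab = \tau_\alp \wedge s_b$, take expectations, and then send $b \to \infty$. To set this up, fix $(x, m) \in \mD$ and note that $X^\mR$ has absolutely continuous drift between claim times with downward jumps $-R_{T_i}(Y_i)$ at the claim times $T_i$, while the running maximum $M^\mR$ is continuous and nondecreasing, with its Stieltjes measure supported on $\{t: X^\mR_t = M^\mR_t\}$.

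Applying the change-of-variables formula to the bounded $\mC^{1,1}$ function $u$ should yield the decomposition
\begin{align*}
u(X^\mR_{t \wedge \tauab}, M^\mR_{t \wedge \tauab}) - u(x, m) &= \int_0^{t \wedge \tauab} \mL^{R_s} u(X^\mR_{s^-}, M^\mR_{s^-})\, ds \\
&\quad + \int_0^{t \wedge \tauab} u_m(M^\mR_s, M^\mR_s)\, dM^\mR_s + L_{t \wedge \tauab},
\end{align*}
where $L$ is the local martingale compensating the jumps of $u$. Because $u$ is bounded, standard localization will make $L$ stopped at $\tauab$ a true martingale; condition (iv) forces the first integral to be nonnegative, and since $dM^\mR$ is supported on $\{X^\mR = M^\mR\}$, condition (iii) forces the second integral to be nonnegative as well. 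Taking $\E^{x,m}$, then letting $t \to \infty$ via Lemma~\ref{lem:tauab_fin} and boundedness of $u$, I would obtain $\E^{x,m}\, u(X^\mR_{\tauab}, M^\mR_{\tauab}) \ge u(x, m)$.

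Next, I would identify the terminal value on each piece of $\{\tau_\alp \le s_b\} \cup \{s_b < \tau_\alp\}$. On the first event, $X^\mR_{\tau_\alp} < \alp M^\mR_{\tau_\alp}$, so condition (ii) gives $u = 1$ at time $\tauab$. On the second event, since $X^\mR$ can rise only continuously, it crosses the level $b$ continuously, so $X^\mR_{s_b} = b$ and, once $b \ge m$, also $M^\mR_{s_b} = b$; thus the terminal value equals $u(b, b)$. Combining,
\[
u(x, m) \le \pr^{x, m}(\tau_\alp \le s_b) + u(b, b)\, \pr^{x, m}(s_b < \tau_\alp).
\]
Sending $b \to \infty$, the first probability increases to $\pr^{x, m}(\tau_\alp^\mR < \infty)$ by monotone convergence, and condition (i) together with the boundedness of $u$ yields $\limsup_{b \to \infty} u(b, b)\, \pr^{x, m}(s_b < \tau_\alp) \le 0$. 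Hence $u(x, m) \le \psi(x, m; \mR)$, and taking the infimum over $\mR \in \fR_x$ gives $u \le \psi$ on $\mD$.

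The hard part will be the rigorous justification of the change-of-variables formula in this mixed setting: I need to handle the compound Poisson downward jumps of $X^\mR$ and the reflection-type continuous growth of $M^\mR$ on $\{X^\mR = M^\mR\}$ simultaneously, and to confirm via localization that the local martingale part truly contributes zero in expectation. The $\mC^{1,1}$ regularity of $u$ on $\mD$ is exactly what is required for this formula, but care is needed because $u$ is extended as the constant $1$ outside $\mD$ and is thus generally discontinuous across $x = \alp m$; this discontinuity is precisely what drives the jump term $\la\, \E\, u(x - R, m) - \la\, u(x, m)$ inside $\mL^R u$, and must be tracked consistently through the expectation using condition (ii).
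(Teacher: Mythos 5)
Your proposal is correct and follows essentially the same route as the paper's own proof: apply Itô's change-of-variables formula to $u(X^\mR, M^\mR)$ stopped at $\tauab$, use conditions (iv) and (iii) to sign the drift and running-maximum integrals, use boundedness of $u$ to kill the compensated-jump martingale in expectation, identify the terminal value via condition (ii) and the continuity of the upward crossing of level $b$, then let $b \to \infty$ using Lemma~\ref{lem:tauab_fin} and condition (i), and finally take the infimum over $\mR$. The only cosmetic differences are that you first send $t \to \infty$ before $b \to \infty$ and you write $\{\tau_\alp \le s_b\}$ where the paper writes $\{\tau_\alp < s_b\}$ and separately notes $\pr^{x,m}(\tau_\alp = s_b) = 0$; neither affects the argument.
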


\begin{proof}
Assume that $u$ satisfies the conditions specified in the statement of this theorem, and fix an admissible retention strategy $\mR$.

For a fixed value of $m \ge 0$, let $b > m$, and define $s_b$ and $\tauab$ as in Lemma \ref{lem:tauab_fin}.  By applying It\^{o}'s formula to $u(x, m)$, we have
\begin{align}\label{eq:sub1}
u \big(X^\mR_{\tauab}, M^\mR_{\tauab} \big) &= u(x, m) + \int_0^{\tauab} \left( - \kap + \la \left( (1 + \tet) \E R_t + \eta \E(Y R_t) - \dfrac{\eta}{2} \, \E(R^2_t) \right) \right) u_x \big(X^\mR_{t^-}, M^\mR_t \big) \, dt \notag \\
&\quad + \int_0^{\tauab} \left( \E u \big(X^\mR_{t^-} - R_t, M^\mR_t \big) - u \big(X^\mR_{t^-}, M^\mR_t \big) \right) dN_t + \int_0^{\tauab} u_m \big(X^\mR_{t^-}, M^\mR_t \big) \, dM^\mR_t \notag \\
&= u(x, m) + \int_0^{\tauab} \mL^{R_t} u \big(X^\mR_{t^-}, M^\mR_t \big) \, dt \notag \\
&\quad + \int_0^{\tauab} \left( \E u \big(X^\mR_{t^-} - R_t, M^\mR_t \big) - u \big(X^\mR_{t^-}, M^\mR_t \big) \right) d(N_t - \la t) \notag \\
&\quad + \int_0^{\tauab} u_m \big(X^\mR_{t^-}, M^\mR_t \big) \, dM^\mR_t.
\end{align}
The first integral in \eqref{eq:sub1} is non-negative because of condition (iv) of the theorem.  The expectation of the second integral equals $0$ because $u$ is bounded.  The third integral is non-negative almost surely because $dM^\mR_t$ is non-zero only when $M^\mR_t = X^\mR_t$ and $u_m(m, m) \ge 0$ by condition (iii).  Here, we also used the fact that $M^\mR$ is non-decreasing; therefore, the first variation process associated with it is finite almost surely, and we conclude that the cross variation of $M^\mR$ and $X^\mR$ is zero almost surely.  Thus, by taking expectations in \eqref{eq:sub1}, we have
\begin{equation}\label{eq:sub2}
\E^{x, m} \left[u \big(X^\mR_{\tauab}, M^\mR_{\tauab} \big) \right] \ge u(x, m).
\end{equation}

Because $\tauab < \infty$ with probability 1 and because $b > m$, it follows from the extension of $u$ to $u(x, m) = 1$ for all $x < \alp m$ and from inequality \eqref{eq:sub2} that
\begin{align}\label{eq:sub3}
u(x, m) &\le \pr^{x, m}(\tau_\alp < s_b) \cdot 1 + \pr^{x, m}(s_b < \tau_\alp) \cdot u(b, b) \notag \\
&\le \pr^{x, m}(\tau_\alp < \infty) + \pr^{x, m}(s_b < \tau_\alp) \cdot u(b, b).
\end{align}
In \eqref{eq:sub3}, we purposefully omit $\pr^{x, m}(\tau_\alp = s_b)$ because that probability equals $0$, which follows from $\pr^{x, m}(\tauab < \infty) = 1$ and $\alp m < b$.

By applying the Dominated Convergence Theorem to \eqref{eq:sub3} as we take the limit $b \to \infty$ and by using $\lim_{b \to \infty} u(b, b) \le 0$ from condition (i), we obtain
\begin{equation}\label{eq:sub4}
u(x, m) \le \pr^{x, m}(\tau_\alp < \infty).
\end{equation}
By taking the infimum over admissible strategies, we obtain $u \le \psi$ on $\mD$.
\end{proof}

In the second theorem, we show that any smooth supersolution of our problem is greater than or equal to the value function.

\begin{theorem}[Supersolution]\label{thm:super}
Suppose $v \in \mC^{1,1}(\mD)$ is a bounded function that satisfies the following conditions:
\begin{enumerate}
\item[$(i)$] $\lim \limits_{m \to \infty} v(m, m) \ge 0$. 
\item[$(ii)$] $v(x, m)$ is defined for all $x \le m$ and $m \ge 0$, with $v(x, m) = 1$ for all $x < \alp m$.
\item[$(iii)$] $v_m(m, m) \le 0$ for all $m \ge 0$.
\item[$(iv)$] $\mL^{\hR} v(x, m) \le 0$ for all $(x, m) \in \mD$ and for some admissible retention function $\hR$.
\end{enumerate}

\noindent
Then, $\psi \le v$ on $\mD$.
\end{theorem}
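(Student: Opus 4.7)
The plan is to mirror the proof of Theorem \ref{thm:sub} with two structural changes: first, instead of quantifying over admissible strategies, I fix the constant strategy $\mR = \{\hR\}_{t \ge 0} \in \fR_x$ built from the specific retention function $\hR$ supplied by condition (iv); second, every sign in the It\^o decomposition reverses. For $(x,m) \in \mD$ and $b > m$, I take $s_b$ and $\tauab$ as in Lemma \ref{lem:tauab_fin}, so that $\pr^{x,m}(\tauab < \infty) = 1$, and apply It\^o's formula to $v(X^\mR_t, M^\mR_t)$ on $[0, \tauab]$ to obtain the direct analog of \eqref{eq:sub1}.

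Next I would check the three resulting terms. The drift integral $\int_0^{\tauab} \mL^{\hR} v(X^\mR_{t^-}, M^\mR_t)\, dt$ is nonpositive by condition (iv). The compensated-jump integral has zero expectation because $v$ is bounded. The integral $\int_0^{\tauab} v_m(X^\mR_{t^-}, M^\mR_t)\, dM^\mR_t$ is almost surely nonpositive, because $M^\mR$ increases only on $\{M^\mR_t = X^\mR_t\}$, where condition (iii) forces $v_m \le 0$; cross-variation of $M^\mR$ with $X^\mR$ vanishes as in the subsolution proof. Taking expectations yields $\E^{x,m}[v(X^\mR_{\tauab}, M^\mR_{\tauab})] \le v(x,m)$. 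I then split by cases. On $\{\tau_\alp < s_b\}$, the process jumps strictly below $\alp M^\mR$, so by condition (ii) the terminal value of $v$ equals $1$. On $\{s_b < \tau_\alp\}$, the surplus reaches $b$ continuously via premium income, so $X^\mR_{s_b} = M^\mR_{s_b} = b$ and the terminal value is $v(b,b)$. Combining gives
\[
v(x,m) \ge \pr^{x,m}(\tau_\alp < s_b) + \pr^{x,m}(s_b < \tau_\alp)\, v(b,b).
\]

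Finally I would send $b \to \infty$. Since $s_b \uparrow \infty$ almost surely, the first probability converges to $\pr^{x,m}(\tau_\alp < \infty) = \psi(x, m; \mR)$. For the boundary term, condition (i) provides, for any $\eps > 0$, a threshold $B$ with $v(b,b) \ge -\eps$ whenever $b \ge B$; together with boundedness of $v$ this forces $\liminf_{b \to \infty} \pr^{x,m}(s_b < \tau_\alp)\, v(b,b) \ge -\eps$. Letting $\eps \downarrow 0$ and recalling that $\psi(x, m) \le \psi(x, m; \mR)$, I conclude $v(x,m) \ge \psi(x, m; \mR) \ge \psi(x,m)$, which is the claim. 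The only nontrivial obstacle is this final step: condition (i) is an asymptotic statement, not a pointwise sign on $v(b,b)$, so the $\eps$-argument above is needed to squeeze out nonnegativity in the limit. Beyond this, the proof is a sign-reversed replay of Theorem \ref{thm:sub}, with the single specific strategy $\{\hR\}$ replacing the supremum over strategies.
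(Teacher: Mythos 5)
Your proof is correct and mirrors the paper's argument: fix the constant strategy $\{\hR\}$, apply It\^o's formula up to $\tauab$, use conditions (iii)--(iv) and boundedness of $v$ to kill the three integral terms, split at $\tauab$, and let $b \to \infty$. The only place you depart from the paper is the passage to the limit: you assert $s_b \uparrow \infty$ a.s.\ (which is true because the drift $p_{R_t} \le c$ forces $X^\mR_t \le x + ct$, hence $s_b \ge (b-x)/c$, but you should say so), whereas the paper sidesteps this by directly proving the set identity $\lim_{b\to\infty}\{\tau_\alp < s_b\} = \{\tau_\alp < \infty\}$ via the observation that $M^{\hat\mR}_{\tau_\alp} < \infty$ on $\{\tau_\alp < \infty\}$; your $\varepsilon$-$\liminf$ handling of the boundary term is a harmless substitute for the paper's appeal to dominated convergence.
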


\begin{proof}
Assume that $v$ and $\hR$ satisfy the conditions specified in the statement of this theorem, and let $\hat{\mR}$ denote the constant retention strategy corresponding to $\hR$.

For a fixed value of $m \ge 0$, let $b > m$, and define $s_b$ and $\tauab$ as in Lemma \ref{lem:tauab_fin}.  By applying It\^{o}'s formula to $v(x, m)$, we have
\begin{align}\label{eq:super1}
v \Big(X^{\hat \mR}_{\tauab}, M^{\hat \mR}_{\tauab} \Big) &= v(x, m) + \int_0^{\tauab} \left( - \kap + \la \left( (1 + \tet) \E \hR_t + \eta \E \big( Y \hR_t \big) - \dfrac{\eta}{2} \, \E \big( \hR^2_t \big) \right) \right) v_x \Big(X^{\hat \mR}_{t^-}, M^{\hat \mR}_t \Big) \, dt \notag \\
&\quad + \int_0^{\tauab} \left( \E v \Big(X^{\hat \mR}_{t^-} - \hR_t, M^{\hat \mR}_t \Big) - v \Big(X^{\hat \mR}_{t^-}, M^{\hat \mR}_t \Big) \right) dN_t + \int_0^{\tauab} v_m \Big(X^{\hat \mR}_{t^-}, M^{\hat \mR}_t \Big) \, dM^{\hat \mR}_t \notag \\
&= v(x, m) + \int_0^{\tauab} \mL^{\hR_t} v \Big(X^{\hat \mR}_{t^-}, M^{\hat \mR}_t \Big) \, dt \notag \\
&\quad + \int_0^{\tauab} \left( \E v \Big(X^{\hat \mR}_{t^-} - \hR_t, M^{\hat \mR}_t \Big) - v \Big(X^{\hat \mR}_{t^-}, M^{\hat \mR}_t \Big) \right) d(N_t - \la t) \notag \\
&\quad + \int_0^{\tauab} v_m \Big(X^{\hat \mR}_{t^-}, M^{\hat \mR}_t \Big) \, dM^{\hat \mR}_t.
\end{align}
The first integral in \eqref{eq:super1} is non-positive because of condition (iv) of the theorem.  The expectation of the second integral equals $0$ because $v$ is bounded.  The third integral is non-positive almost surely because $dM^{\hat \mR}_t$ is non-zero only when $M^{\hat \mR}_t = X^{\hat \mR}_t$ and $v_m(m, m) \le 0$ by condition (iii).  Thus, by taking expectations in \eqref{eq:super1}, we have
\begin{equation}\label{eq:super2}
\E^{x, m} \left[v \Big(X^{\hat \mR}_{\tauab}, M^{\hat \mR}_{\tauab} \Big) \right] \le v(x, m).
\end{equation}

Because $\tauab < \infty$ with probability 1 and because $b > m$, it follows from the extension of $v$ to $v(x, m) = 1$ for all $x < \alp m$ and from inequality \eqref{eq:super2} that
\begin{align}\label{eq:super3}
v(x, m) &\ge \pr^{x, m}(\tau_\alp < s_b) \cdot 1 + \pr^{x, m}(s_b < \tau_\alp) \cdot v(b, b).
\end{align}
Next, we wish to show that
\begin{equation}\label{eq:super3a}
\lim_{b \to \infty} \{\tau_\alp < s_b\} = \{\tau_\alp < \infty\}.
\end{equation}
Because $\{\tau_\alp < s_b\} \subseteq \{\tau_\alp < \infty\}$ for all $b > m$, it follows that $\lim_{b \to \infty} \{\tau_\alp < s_b\} \subseteq \{\tau_\alp < \infty\}$.  To prove the opposite inclusion, suppose $\omega \in \{\tau_\alp < \infty\}$, that is, drawdown occurs in finite time, say, at time $t$.  Thus, $M^{\hat \mR}_t < \infty$, which implies $\omega \in \{\tau_\alp < s_b\}$ for all $b > M^{\hat \mR}_t$, and we have shown that $\{\tau_\alp < \infty\} \subseteq \lim_{b \to \infty} \{ \tau_\alp < s_b \}$, from which \eqref{eq:super3a} follows.

By applying the Dominated Convergence Theorem to \eqref{eq:super3} as we take the limit $b \to \infty$, by using \eqref{eq:super3a}, and by using $\lim_{b \to \infty} v(b, b) \ge 0$ from condition (i), we obtain
\begin{equation}\label{eq:super4}
v(x, m) \ge \pr^{x, m}(\tau_\alp < \infty),
\end{equation}
which implies $v \ge \psi$ on $\mD$.
\end{proof}

\subsection{Scaled Cram\'er-Lundberg model}\label{sec:scale_CL}

In this section, we scale the Cram\'er-Lundberg risk model by $n > 0$.  To obtain the scaled model, multiply the Poisson rate $\la$ by $n$, divide the claim severity by $\sqrtn$, and adjust the premium rate so that net premium income remains constant. Specifically, define $\lan = n \la$, so $n$ large is equivalent to $\lan$ large.  Scale the claim severity by defining $\Yn = Y/\sqrtn$. Also, define $\tetn = \tet/\sqrtn$ and $\cn = c + (\sqrtn - 1) \la \E Y$, which implies $\cn - \lan \En \Yn = c - \la \E Y$, independent of $n$.\footnote{By writing $\En$, we mean expectation with respect to the measure induced by $\Yn$.  Specifically,
\begin{align*}
\En(g(\Yn)) &= \int_0^\infty g(y) dF_{\Yn}(y) = \int_0^\infty g(y) d\Fy(\sqrtn y) \\
&= \int_0^\infty g(t/\sqrtn) d\Fy(t) = \E(g(Y/\sqrtn)),
\end{align*}
in which $\E = \E_1$.}  The parameter $\eta$ remains unchanged.  Finally, define
\begin{align*}
\kap_n &= (1 + \tetn) \lan \En\Yn + \dfrac{\eta}{2} \,  \lan \En(\Yn^2) - \cn \\
&= (1 + \tet) \la \E Y + \dfrac{\eta}{2} \,  \la \E(Y^2) - c = \kap,
\end{align*}
so $\kapn$ is also independent of $n$.

In the scaled model, define an {\it $n$-admissible retention strategy} $\mRn = \{ (\Rn)_t \}_{t \ge 0}$ is as in Definition \ref{def:adm}, except with condition (iii) replaced by
\begin{equation}\label{eq:iii_prime}
(iii') \hskip 20 pt   \cn - (1 + \tetn) \lan \En(\Yn - (\Rn)_t) - \dfrac{\eta}{2} \, \lan \En((\Yn - (\Rn)_t)^2) > \lan \En (\Rn)_t.
\end{equation}
Similarly, define an {\it $n$-admissible retention function} $\Rn$.  If we are given an $n$-admissible retention strategy $\mRn = \{ (\Rn)_t \}_{t \ge 0}$, then we can define an admissible retention strategy $\mR = \{ R_t \}_{t \ge 0}$ by
\begin{equation}\label{eq:R_Rn_t}
R_t(\omega, y) = \sqrtn \, (\Rn)_t(\omega, y/\sqrtn \, ).
\end{equation}
Indeed, by using the assignment in \eqref{eq:R_Rn_t}, condition (iii$'$) in \eqref{eq:iii_prime} implies
\begin{align*}
&(c + (\sqrtn - 1) \la \E Y) - \left(1 + \dfrac{\tet}{\sqrtn} \right) n \la \E\bigg( \dfrac{Y - R_t}{\sqrtn} \bigg)  - \dfrac{\eta}{2} \, n \la \E\ \bigg(\bigg( \dfrac{Y - R_t}{\sqrtn} \bigg)^2 \bigg) > n \la \E \bigg( \dfrac{R_t}{\sqrtn} \bigg) \\
&\iff c - \la(1 + \tet) \E Y + \la \tet \E R_t -  \dfrac{\eta}{2} \, \la \E((Y - R_t)^2) > 0 \\
&\iff c - \la(1 + \tet) \E (Y - R_t) -  \dfrac{\eta}{2} \, \la \E((Y - R_t)^2) > \la \E R_t.
\end{align*}
Conversely, given an admissible strategy $\mR$, we can define an $n$-admissible strategy $\mRn$ via \eqref{eq:R_Rn_t}.  Thus, we deduce there is a one-to-one correspondence between admissible strategies and $n$-admissible strategies.  Similarly, via the relationship
\begin{equation}\label{eq:R_Rn}
R(y) = \sqrtn \, \Rn(y/\sqrtn \, ),
\end{equation}
we obtain a one-to-one correspondence between admissible retention functions and $n$-admissible retention functions.

Let $\Xn^{\mRn}$ denote the surplus process for the scaled model; thus, $\Xn^{\mRn}$ follows the dynamics
\begin{align}
\label{eq:Xn}
d \big(\Xn^{\mRn} \big)_t &= \left( - \kapn + \lan \left( (1 + \tetn) \En (\Rn)_t + \eta \En( \Yn (\Rn)_t) - \dfrac{\eta}{2} \, \En((\Rn)^2_t) \right) \right)dt - (\Rn)_t \, d(\Nn)_t \notag \\
&= \left( - \kap + n \la \left( (1 + \tet/\sqrtn) \En (\Rn)_t + \eta \En(Y (\Rn)_t/\sqrtn) - \dfrac{\eta}{2} \, \En((\Rn)^2_t) \right) \right)dt - (\Rn)_t \, d(\Nn)_t,
\end{align}
in which $\Nn$ denotes the Poisson process with rate $\lan = n \la$.  Let $\psin$ denote the minimum probability of drawdown for the scaled system, and note that Theorems \ref{thm:sub} and \ref{thm:super} apply to $\psin$ with $\mL^R$ replaced by $\mLn^{\Rn}$, in which $\mLn^{\Rn}$ is defined as follows: for $u \in \mC^{1,1}(\mD)$ and for an $n$-admissible retention function $\Rn$,
\begin{align}
\label{eq:mLn}
&\mLn^{\Rn} u(x, m) =  - \kapn u_x(x, m) \notag \\
&+ \lan \left[ \left( (1 + \tetn) \En \Rn + \eta \En(\Yn\Rn) - \dfrac{\eta}{2} \, \En(\Rn^2) \right) u_x(x,m) + \En u(x - \Rn, m) - u(x, m) \right].
\end{align}
In \eqref{eq:mLn}, we again extend $u$ by defining $u(x, m) = 1$ for $x < \alp m$.

\subsection{Diffusion approximation}\label{sec:diff}

Let $\Xnd^\mR$ denote the diffusion approximation to the $n$-scaled process in \eqref{eq:Xn}, which we form by
\[
(\Rn)_t \, d(\Nn)_t \approx n \la \En (\Rn)_t \, dt - \sqrt{n \la \En((\Rn)^2_t)} \, dB_t,
\]
in which $B$ is a standard Brownian motion on the filtered probability space.  Then, $\Xnd^{\mRn}$ follows the dynamics
\begin{align}\label{eq:Xd}
d \big(\Xnd^{\mR_n} \big)_t &= \left( - \kap + n \la \left( (\tet/\sqrtn) \En(\Rn)_t + \eta \En(\Yn (\Rn)_t) - \dfrac{\eta}{2} \, \En ((\Rn)^2_t) \right) \right)dt + \sqrt{n \la \En((\Rn)^2_t)} \, dB_t \notag \\
&= \left( - \kap + \la \left( \tet \En (\sqrtn (\Rn)_t) + \eta \En(\sqrtn \Yn \cdot \sqrtn (\Rn)_t) - \dfrac{\eta}{2} \, \En(n (\Rn)^2_t) \right) \right)dt \notag \\
&\quad + \sqrt{\la \En(n(\Rn)^2_t)} \, dB_t \notag \\
&=  \left( - \kap + \la \left( \tet \E R_t + \eta \E(YR_t) - \dfrac{\eta}{2} \, \E(R^2_t) \right) \right)dt + \sqrt{\la \E(R^2_t)} \, dB_t,
\end{align}
in which we define the strategy $\mR = \{ R_t \}_{t \ge 0}$ by the assignment in \eqref{eq:R_Rn_t}.  Thus, the dynamics of $\Xnd^{\mRn}$ are independent of $n$ after this assignment, which implies that the minimum probability of ruin for the diffusion approximation of the $n$-scaled process is independent of $n$.

Let $\psid$ denote the minimum probability of drawdown for the diffusion approximation of the (scaled) Cram\'er-Lundberg model.  Han et al.\ \cite{HLY2020} solved the optimization problem associated with $\psid$.  The following theorem is a limiting case (as the riskless rate $r \to 0^+$) of Theorem 3.2, the main result of Han et al.\ \cite{HLY2020}.

\begin{theorem}\label{thm:psid}
The minimum probability of drawdown $\psid$ on
\[
\mD = \big\{(x,m) \in \R_+^2: \alp m \le x \le m \big\}
\]
under the diffusion approximation in \eqref{eq:Xd} equals
\begin{equation}\label{eq:psid}
\psid(x, m) = 1 - \hd(m) \left( 1 - e^{-\rhod(x - \alp m)} \right),
\end {equation}
in which $\hd$ is defined by
\begin{equation}\label{eq:hd}
\hd(m) = \left( 1 - e^{-\rhod(1 - \alp)m} \right)^{\frac{\alp}{1 -\alp}},
\end{equation}
and $\rhod > 0$ uniquely solves
\begin{equation}\label{eq:rhod}
c - \la \E Y = \la \rho  \int_0^\infty \left( \dfrac{\tet + \eta y}{\rho + \eta} \wedge y \right) \Sy(y) dy.
\end{equation}
The corresponding optimal retention strategy is a constant strategy $\{\Rd \}_{t \ge 0}$, in which
\begin{equation}\label{eq:Rd}
\Rd(y) = \dfrac{\tet + \eta y}{\rhod + \eta} \wedge y,
\end{equation}
with $y \ge 0$ the possible claim size.  \qed
\end{theorem}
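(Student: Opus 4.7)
The plan is to follow an HJB-verification approach, exactly as Han et al.\ \cite{HLY2020} do in their Theorem 3.2, and specialize to the zero-interest-rate limit. Since the diffusion dynamics \eqref{eq:Xd} are independent of $n$, I would first write the HJB equation for $\psi_D$ on $\mD$, namely
\[
\inf_R \left\{ \left[- \kap + \la \E\!\left( \tet R + \eta YR - \tfrac{\eta}{2} R^2 \right) \right] v_x + \tfrac{\la}{2} \E(R^2) v_{xx} \right\} = 0,
\]
together with the drawdown boundary condition $v(\alp m, m) = 1$, the reflection condition $v_m(m, m) = 0$ at the running maximum, and the far-field condition $\lim_{m \to \infty} v(m, m) = 0$. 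The drawdown boundary follows from continuity at the hitting time, while the reflection at $x = m$ is the standard Neumann condition associated with a process controlled by its running supremum (the local time at the max grows only when $X = M$, so the cross term in It\^o forces $v_m(m,m) = 0$).

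Next I would carry out the pointwise minimization in $R(y)$: for each $y$ the integrand is quadratic in $R(y)$ with positive leading coefficient $\tfrac{\la}{2} v_{xx}$ (anticipating $v_{xx} > 0$ and $v_x < 0$), so the constrained minimizer is the truncated affine function
\[
R^\ast(y) = \left( \dfrac{\tet + \eta y}{\eta + v_{xx}/(-v_x)} \right) \wedge y.
\]
This minimizer becomes independent of $(x, m)$ exactly when the ratio $v_{xx}/(-v_x)$ is constant, which motivates the exponential-in-$x$ ansatz $v(x, m) = 1 - h(m)(1 - e^{-\rho(x - \alp m)})$ with $v_{xx}/(-v_x) \equiv \rho$, and delivers the constant retention function $\Rd$ in \eqref{eq:Rd}. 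Substituting the ansatz into the HJB reduces it to the scalar equation $-\kap + \la \E(\tet \Rd + \eta Y \Rd - \tfrac{\eta}{2} \Rd^2) = (\la \rho/2) \E(\Rd^2)$; using $\kap = (1+\tet)\la \E Y + \tfrac{\eta}{2} \la \E Y^2 - c$ and the tail identity $\E g(Y) = \int_0^\infty g'(y) \Sy(y)\,dy$ for $g(0) = 0$ applied to $\Rd$, $Y \Rd$, and $\Rd^2$ (splitting the integration at $y^\ast = \tet/\rho$), this collapses to \eqref{eq:rhod}; monotonicity of the right-hand side of \eqref{eq:rhod} in $\rho$ combined with its behavior at $\rho = 0$ and $\rho \to \infty$ yields existence and uniqueness of $\rhod > 0$. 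The Neumann condition $v_m(m,m) = 0$ then produces the separable ODE
\[
\dfrac{h'(m)}{h(m)} = \dfrac{\rhod \, \alp \, e^{-\rhod(1 - \alp)m}}{1 - e^{-\rhod(1 - \alp)m}},
\]
integrated to $h(m) = C \, (1 - e^{-\rhod(1 - \alp)m})^{\alp/(1-\alp)}$; the far-field condition $\psid(m,m) \to 0$ pins down $C = 1$, giving \eqref{eq:hd}.

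Finally, I would verify that this explicit candidate is indeed the value function by invoking the diffusion analogues of Theorems \ref{thm:sub} and \ref{thm:super}, with the jump-integral term in $\mL^R$ replaced by the second-order diffusion term and It\^o's formula for the continuous semimartingale $\Xnd^\mR$ replacing the jump version; the cross-variation of $M$ and $X_D$ is still zero and $dM$ is supported on $\{X_D = M\}$, so $v_m(m,m) = 0$ suffices for the boundary terms to vanish. The main obstacles are (a) the algebraic reduction of the HJB to exactly \eqref{eq:rhod}, which requires a careful case split at $y^\ast = \tet/\rho$ in the tail-integral identity, and (b) confirming that $\Rd$ is admissible in the sense of Definition \ref{def:adm} — this is where the hypothesis \eqref{eq:c_not_huge} and the positivity of $\kap$ enter, ensuring $\la \E \Rd < p_{\Rd}$ in the reduced equation.
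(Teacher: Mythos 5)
Your proposal is correct, but note that the paper itself does not prove this theorem: it simply cites Theorem~3.2 of Han et al.\ \cite{HLY2020} and observes that the present statement is the $r \to 0^+$ limiting case. What you have written is, in effect, a reconstruction of the HJB-verification argument that appears in \cite{HLY2020}, and the steps you lay out are the right ones: the pointwise minimization over $R(y)$ of a $y$-by-$y$ quadratic with positive leading coefficient $\frac{\la}{2}\big(v_{xx} - \eta v_x\big)$ produces the truncated-affine $R^\ast(y) = \frac{\tet + \eta y}{\eta + v_{xx}/(-v_x)} \wedge y$; the exponential-in-$x$ ansatz makes $v_{xx}/(-v_x) \equiv \rho$ constant so that the optimizer is the state-independent $\Rd$; the tail-integral identity $\E g(Y) = \int_0^\infty g'(y)\Sy(y)\,dy$ applied with a split at $y^\ast = \tet/\rho$ collapses the scalar relation $-\kap + \la\big(\tet \E\Rd + \eta\E(Y\Rd) - \tfrac{\eta}{2}\E(\Rd^2)\big) = \tfrac{\la\rho}{2}\E(\Rd^2)$ to \eqref{eq:rhod} (one checks the integrand equals $\rho \Rd(y)$ on both sides of $y^\ast$); and the Neumann condition $v_m(m,m)=0$ plus $\psid(m,m) \to 0$ pins down $h_D$ as in \eqref{eq:hd}. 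Two small points to tighten: (a) the minimization step anticipates $v_x < 0$ and $v_{xx} > 0$, and these must be confirmed on the candidate before the argument that the $y$-by-$y$ minimizer is truncated at $y$ (they do hold since $\rhod > 0$ and $0 < \hd(m) \le 1$); and (b) the diffusion analogues of Theorems~\ref{thm:sub} and \ref{thm:super} that you invoke for the verification step are not stated anywhere in this paper, so a complete self-contained proof would need to formulate and prove them --- precisely the gap the authors fill by deferring to \cite{HLY2020}.
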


\begin{remark}\label{rem:Rdn}
If we reverse \eqref{eq:R_Rn} as applied to $R = \Rd$ in \eqref{eq:Rd}, then we obtain
\begin{equation}\label{eq:Rdn}
\Rdn(y) := \dfrac{1}{\sqrtn} \Rd(\sqrtn y) = \dfrac{1}{\sqrtn} \left( \dfrac{\tet + \eta \sqrtn y}{\rhod + \eta} \wedge \sqrtn y \right) = \dfrac{\tetn + \eta y}{\rhod + \eta} \wedge y,
\end{equation}
that is, we get the optimal retention function for the diffusion approximation of the $n$-scaled model, as expected, from the discussion following \eqref{eq:Xd}.  \qed
\end{remark}

\begin{remark}
In Section {\rm \ref{sec:adj_coeff_diff}}, we show that $\rhod$ is the maximum adjustment coefficient, which parallels a similar result in Liang, Liang, and Young {\rm \cite{LLY2020}}.  \qed
\end{remark}

An important result of this paper is that $\psid$ and $\psin$ are approximately equal, specifically, to order $\mO(n^{-1/2})$.  The technique for showing that $\psid$ approximates $\psin$ is to modify $\psid$ by a function of order $\mO(n^{-1/2})$ and, then, prove that the modified function is a sub- or supersolution of $\psin$ via Theorem \ref{thm:sub} or \ref{thm:super}, respectively.  In the next section, we detail the steps in the remainder of the paper.

\subsection{Outline of paper}\label{sec:outline}

To make it easier to follow the material in Sections \ref{sec:adj_coeff} and \ref{sec:diff_approx}, we outline our steps in those sections:
\begin{enumerate}
\item{}  In Section \ref{sec:adj_coeff}, we define analogs of the so-called {\it adjustment coefficient} from risk theory; for background on the adjustment coefficient see, for example, Section 5.4 in Schmidli \cite{S2017}.
\begin{enumerate}
\item{} In Section \ref{sec:adj_coeff_CL}, we define an analog of the maximum adjustment coefficient for the scaled Cram\'er-Lundberg model, which we denote by $\rhon$.

\item{}  In Section \ref{sec:adj_coeff_diff}, we show that $\rhod$ from Section \ref{sec:diff} is the analog of the maximum adjustment coefficient for the diffusion approximation.

\item{}  In Section \ref{sec:relation}, we prove two lemmas that relate $\rhon$ and $\rhod$.  In particular, we show $\lim \limits_{n \to \infty} \rhon = \rhod$.
\end{enumerate}

\item{}  In Section \ref{sec:diff_approx}, we justify using the diffusion approximation for the classical risk process when analyzing the minimum probability of drawdown.
\begin{enumerate}
\item{}  In Section \ref{sec:bounds}, we modify $\psid$ by functions of order $\mO(n^{-1/2})$ to obtain upper and lower bounds of $\psin$; denote those bounds by $\un$ and $\elln$, respectively.  We use Theorems \ref{thm:sub} and \ref{thm:super} to prove that, indeed, $\elln \le \psin \le \un$ on $\R$.  Propositions \ref{prop:psin_lower} and \ref{prop:psin_upper} prove these two inequalities, respectively.

\item{}  In Section \ref{sec:psin_lim}, we use Propositions \ref{prop:psin_upper} and \ref{prop:psin_lower} to prove Theorem \ref{thm:psin_lim}, which states that, as $n$ goes to infinity, $\psin$ converges to $\psid$ uniformly on $\mD$ with rate of convergence of order $\mO(n^{-1/2})$.

\item{} Finally, in Section \ref{sec:Rd_opt}, we prove the main result of our paper, namely, that if the insurer uses the optimal strategy from the diffusion approximation, then the resulting probability of drawdown is $\mO(n^{-1/2})$-optimal.  This result thereby justifies using the diffusion approximation when minimizing the probability of drawdown.
\end{enumerate}
\end{enumerate}

\section{Analogs of the adjustment coefficient}\label{sec:adj_coeff}

\subsection{Scaled Cram\'er-Lundberg model}\label{sec:adj_coeff_CL}

In this section, we define the analog of the adjustment coefficient for the scaled Cram\'er-Lundberg model for the probability of drawdown.  We will use that analog to create an exponential upper bound of $\psin$.

For a given $n$-admissible retention function $\Rn$, formally obtain the adjustment coefficient $\rhon(\Rn) > 0$ by substituting $e^{-\rho x}$ for $u$ in $\mLn^{\Rn} u = 0$, including when $x - Y/\sqrtn < \alp m$.  When we perform this substitution, we obtain the following equation for $\rhon(\Rn)$:
\begin{equation*}
\left[- \kapn + \lan \left( (1 + \tetn) \En \Rn + \eta \En(\Yn \Rn) - \frac{\eta}{2} \, \En(\Rn^2) \right) \right] \rho - \lan \big( M_{\Rn}(\rho) - 1 \big) = 0,
\end{equation*}
in which $M_{\Rn}$ denotes the moment generating function of $\Rn$, which is finite in a neighborhood of $0$ because $\My$ is finite in a neighborhood of $0$.  After we substitute for the $n$-scaled parameters and set $R(y) = \sqrtn \Rn(y/\sqrtn)$ as in \eqref{eq:R_Rn}, this equation is equivalent to
\begin{equation}\label{eq:rhon_R}
n \la \gn(\rho; R) = \la \left(\tet \E R + \eta \E(YR) - \dfrac{\eta}{2} \, \E(R^2) \right) - \kap,
\end{equation}
in which we define $\gn$ by
\begin{equation}\label{eq:gn}
\gn(\rho; R) = \dfrac{1}{\rho} \, \left( \E e^{\frac{\rho}{\sqrtn} R} - 1 - \frac{\rho}{\sqrtn} \, \E R\right).
\end{equation}
Condition (iii) in Definition \ref{def:adm} for an admissible retention function implies that the right side of \eqref{eq:rhon_R} is positive; in fact, those two statements are equivalent.  Also, $\gn(\rho; R)$ increases from $0^+$ to infinity as $\rho$ increases from $0^+$ to infinity; thus, $\rhon(\Rn) > 0$ exists for any $n$-admissible retention function $\Rn$.

Let $\rhon$ denote the maximum adjustment coefficient for the classical risk model, in which we maximize over $n$-admissible retention functions $\Rn$.  By following the argument in Section 4.1 of Liang, Liang, and Young \cite{LLY2020}, we deduce that $\rhon$ solves the following maximization problem:
\begin{equation}\label{eq:rhon_HJB_0}
\sup_{\Rn} \left\{ \left[- \kapn + \lan \left( (1 + \tetn) \En \Rn + \eta \En(\Yn \Rn) - \frac{\eta}{2} \, \En(\Rn^2) \right) \right] \rho - \lan \big( M_{\Rn}(\rho) - 1 \big) \right\} = 0,
\end{equation}
in which we maximize over $n$-admissible retention functions $\Rn$.  By using the assignment in \eqref{eq:R_Rn}, this maximization problem is equivalent to
\begin{equation}\label{eq:rhon_HJB}
-\kap + \la \sup_{R} \left[  \left( \tet \E R + \eta \E(Y R) - \frac{\eta}{2} \, \E(R^2) \right) - n \gn(\rho; R) \right] = 0,
\end{equation}
in which we maximize over admissible retention functions $R$.

In the following proposition, we give an expression for $\rhon$ and the corresponding optimal $n$-retention function $\Rn^\rho$.

\begin{proposition}\label{prop:rhon}
The maximum adjustment coefficient $\rhon > 0$ for the $n$-scaled risk process in \eqref{eq:Xn} uniquely solves
\begin{equation}\label{eq:rhon}
c - \la \E Y = n \la \int_0^\infty \left( e^{\rho \Rn^\rho(y)} - 1 \right) S_{\Yn}(y) dy.
\end{equation}
in which $S_{\Yn}(y) = \Sy(\sqrtn y)$, for all $y \ge 0$,  and the corresponding optimal $n$-retention function $\Rn^\rho$ is given by
\begin{equation}\label{eq:Rn_rho}
\Rn^\rho(y) =
\begin{cases}
y, &\quad 0 \le y \le \dfrac{1}{\rhon} \ln(1 + \tetn), \\
R_c(y), &\quad y >  \dfrac{1}{\rhon} \ln(1 + \tetn).
\end{cases}
\end{equation}
In \eqref{eq:Rn_rho}, $R_c(y) \in [0, y)$ for $y > \frac{1}{\rhon} \ln(1 + \tetn)$ uniquely solves
\begin{equation}\label{eq:Rc}
(1 + \tetn) + \eta (y - R) =  e^{\rhon R}.
\end{equation}
\end{proposition}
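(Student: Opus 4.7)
The plan is to determine the pointwise optimal retention by maximizing the integrand in the HJB-type identity \eqref{eq:rhon_HJB_0} claim by claim, then substitute back and simplify via integration by parts until the identity \eqref{eq:rhon} drops out. Dividing \eqref{eq:rhon_HJB_0} by $\rho>0$, the requirement becomes $\sup_{\Rn}\{-\kap_n + \lan\En[(1+\tetn)\Rn+\eta\Yn\Rn-\tfrac{\eta}{2}\Rn^2-\tfrac{1}{\rho}(e^{\rho\Rn}-1)]\}=0$. The integrand depends on $\Rn(y)$ only through its pointwise value, so for each $y>0$ I would maximize $\phi(R;y):=(1+\tetn)R+\eta yR-\tfrac{\eta}{2}R^2-\tfrac{1}{\rho}(e^{\rho R}-1)$ over $R\in[0,y]$. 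Strict concavity $\phi''(R;y)=-\eta-\rho e^{\rho R}<0$ together with the boundary slope $\phi'(y;y)=(1+\tetn)-e^{\rho y}$ yields two cases: writing $y^*:=\tfrac{1}{\rho}\ln(1+\tetn)$, if $y\le y^*$ the slope remains non-negative on $[0,y]$ so the maximizer is $R=y$; otherwise there is a unique interior critical point $R_c(y)\in(0,y)$ solving $(1+\tetn)+\eta(y-R)=e^{\rho R}$, which is \eqref{eq:Rc}. This produces the two-piece form \eqref{eq:Rn_rho} for $\Rn^\rho$.

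Next I would substitute $\Rn^\rho$ back and convert each expectation into a Lebesgue integral weighted by $S_{\Yn}$ via $\En g(\Yn)=\int_0^\infty g'(y) S_{\Yn}(y)\,dy$, justified for $g\in\{R,yR,R^2,e^{\rho R}-1\}$ by the exponential decay of $S_{\Yn}$ from Remark \ref{rem:mgf}. After substitution, the Lundberg-type equation becomes
\begin{equation*}
-\kap_n + \lan\!\int_0^\infty\!\Big[\eta\Rn^\rho + (\Rn^\rho)'\big((1+\tetn)+\eta(y-\Rn^\rho)\big)\Big]S_{\Yn}\,dy = \lan\!\int_0^\infty (\Rn^\rho)' e^{\rho\Rn^\rho} S_{\Yn}\,dy.
\end{equation*}
On $\{y>y^*\}$ the first-order condition collapses $(1+\tetn)+\eta(y-\Rn^\rho)$ to $e^{\rho\Rn^\rho}$ and the matching integrands cancel; on $\{y\le y^*\}$ one has $(\Rn^\rho)'=1$, and what remains rearranges to $\kap_n=\lan\eta\int_0^\infty\Rn^\rho S_{\Yn}\,dy + \lan\int_0^{y^*}[(1+\tetn)-e^{\rho y}]S_{\Yn}\,dy$. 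Combining this with the rewrite $c-\la\E Y=\tet\la\E Y+\tfrac{\eta}{2}\la\E Y^2-\kap$ from \eqref{eq:kap}, and re-expressing $\la\E Y=\lan\tetn/\tet\cdot\En\Yn$ and $\la\E Y^2=\lan\En\Yn^2$ as $\lan$-weighted integrals of $S_{\Yn}$ via the scaling relations, the bookkeeping collapses exactly onto $c-\la\E Y=\lan\int_0^\infty(e^{\rho\Rn^\rho(y)}-1) S_{\Yn}(y)\,dy$, which is \eqref{eq:rhon}.

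For existence and uniqueness of $\rho_n>0$, I would show that the right-hand side of \eqref{eq:rhon}, viewed as a function of $\rho$, is continuous and strictly increasing: on $\{y\le y^*(\rho)\}$ the integrand $e^{\rho y}-1$ is obviously increasing in $\rho$, and on $\{y>y^*(\rho)\}$ implicit differentiation of \eqref{eq:Rc} gives $\partial\Rn^\rho/\partial\rho<0$, so $e^{\rho\Rn^\rho}=(1+\tetn)+\eta(y-\Rn^\rho)$ also increases. The right-hand side tends to $0$ as $\rho\to 0^+$ (since $y^*(\rho)\to\infty$ forces $\Rn^\rho(y)=y$ eventually and $e^{\rho y}-1\to 0$) and to $\la\tet\E Y+\tfrac{\la\eta}{2}\E Y^2$ as $\rho\to\infty$ (since $y^*(\rho)\to 0$ and $R_c(y)\to 0$, whence $e^{\rho R_c}\to 1+\tetn+\eta y$, dominated by the integrable $\tetn+\eta y$). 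Assumption \eqref{eq:c_not_huge} is precisely $c-\la\E Y<\la\tet\E Y+\tfrac{\la\eta}{2}\E Y^2$, so the intermediate value theorem delivers a unique $\rho_n>0$. The main obstacle is the bookkeeping in the second paragraph: substituting the two-piece $\Rn^\rho$ into the Lundberg equation, applying integration by parts, and invoking the pointwise first-order condition so that the contributions from $\{y\le y^*\}$ and $\{y>y^*\}$ combine into \eqref{eq:rhon} rather than an equivalent uglier form. The pointwise optimization and the monotonicity-with-limits argument for uniqueness are structural and routine by comparison.
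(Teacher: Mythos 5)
Your proposal is correct and follows essentially the same route as the paper: pointwise maximization of the integrand via strict concavity to get the two‑piece retention function, then substitution into the Lundberg equation with integration by parts and cancellation via the first‑order condition to reach \eqref{eq:rhon}, and finally strict monotonicity of the right side in $\rho$ together with the limits $0$ (as $\rho\to 0^+$) and $\la\tet\E Y+\tfrac{\la\eta}{2}\E Y^2$ (as $\rho\to\infty$) to invoke \eqref{eq:c_not_huge} for existence and uniqueness. The only cosmetic difference is that you work in $\Rn$‑variables while the paper maximizes the unscaled $j(R)$, and you write out the integration‑by‑parts bookkeeping that the paper compresses into ``solve for $\rhon$.''
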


\begin{proof}
Consider the $R$-dependent terms in \eqref{eq:rhon_HJB}; rewrite them as follows:
\[
\int_0^\infty \left[(\sqrtn + \tet) R(y) + \eta y R(y) - \dfrac{\eta}{2} \, R^2(y) - \dfrac{n}{\rho} \, e^{\frac{\rho}{\sqrtn} R(y)} \right] d\Fy(y).
\]
If we maximize the integrand $y$-by-$y$, subject to $0 \le R(y) \le y$, then the integral itself is maximized.  To that end, for a fixed value of $\rho$, define the function $j$ by
\begin{equation}\label{eq:j}
j(R) = (\sqrtn + \tet) R + \eta y R - \dfrac{\eta}{2} \, R^2 - \dfrac{n}{\rho} \, e^{\frac{\rho}{\sqrtn} R}.
\end{equation}
Then,
\[
j'(R) = (\sqrtn + \tet) + \eta (y - R) - \sqrtn \, e^{\frac{\rho}{\sqrtn} R},
\]
and
\[
j''(R) = - \eta - \rho e^{\frac{\rho}{\sqrtn} R} < 0.
\]
Thus, because $j$ is strictly concave with respect to $R$, the critical value (truncated on the left by $0$ and on the right by $y$) maximizes $j$.  As $R$ increases from $0$ to $y$, $j'(R)$ decreases from $\tet + \eta y \ge 0$ to
\begin{equation}\label{eq:j'(y)}
\sqrtn(1 - e^{\rho y/\sqrtn}) + \tet.
\end{equation}
If the expression in \eqref{eq:j'(y)} is negative, then the maximizing $R$ lies in $[0, y)$.  If it is non-negative, then the maximizing $R$ equals $y$.  Let $\check R(\cdot; \rho)$ denote the maximizing retention function for a given $\rho > 0$; then,
\begin{equation}\label{eq:Rrho}
\check R(y; \rho) =
\begin{cases}
y, &\quad 0 \le y \le \dfrac{\sqrtn}{\rho} \, \ln \bigg(1 + \dfrac{\tet}{\sqrtn} \bigg), \\
\check R_c(y; \rho), &\quad y > \dfrac{\sqrtn}{\rho} \, \ln \bigg(1 + \dfrac{\tet}{\sqrtn} \bigg),
\end{cases}
\end{equation}
in which $\check R_c(y; \rho) \in [0, y)$ for $y > \frac{\sqrtn}{\rho} \ln \big(1 + \frac{\tet}{\sqrtn} \big)$ uniquely solves
\begin{equation}\label{eq:checkRc}
(\sqrtn + \tet) + \eta (y - R) = \sqrtn \, e^{\frac{\rho}{\sqrtn} R}.
\end{equation}
As an aside, Liang, Liang, and Young \cite{LLY2020} show that $\lim_{n \to \infty} \check R(y; \rhon) = \Rd(y)$.

Next, substitute $\check R(\cdot; \rho)$ from \eqref{eq:Rrho} into \eqref{eq:rhon_HJB}, or equivalently,
\[
c - \la \E Y = \la \left( \tet \E(Y - R) + \dfrac{\eta}{2} \, \E((Y - R)^2) \right) + \dfrac{n \la}{\rho} \, \E \bigg( e^{\frac{\rho}{\sqrtn} R} - 1 - \dfrac{\rho R}{\sqrtn} \bigg),
\]
and solve for $\rhon$ to obtain
\begin{equation}\label{eq:rhon_sub}
c - \la \E Y = \sqrtn \la \int_0^\infty \left( \exp\bigg( \rho \, \dfrac{\check R(t; \rho)}{\sqrtn} \bigg) - 1 \right) \Sy(t) dt.
\end{equation}
By substituting $t = \sqrtn y$ in \eqref{eq:rhon_sub}, and by reversing the assignment in \eqref{eq:R_Rn}, we obtain $\Rn^\rho(y) = \frac{1}{\sqrtn} \check R(\sqrtn y; \rhon)$ in \eqref{eq:Rn_rho} with $\rhon$ solving \eqref{eq:rhon}.

It remains to show that \eqref{eq:rhon} has a unique positive solution $\rhon$.  To that end, consider the exponent in the integrand of \eqref{eq:rhon}, namely, $\rho \Rn(y; \rho)$, in which we define $\Rn(y; \rho)$ by replacing $\rhon$ in  \eqref{eq:Rn_rho} with a generic $\rho > 0$.  When $0 < y < \frac{1}{\rho} \ln(1 + \tetn)$,
\[
\dfrac{\partial}{\partial\rho} \big(\rho \Rn^\rho(y)\big) = \dfrac{\partial}{\partial \rho} (\rho y) = y > 0.
\]
When $y > \frac{1}{\rho} \ln(1 + \tetn)$,
\begin{align*}
\dfrac{\partial}{\partial\rho} \big(\rho \Rn^\rho(y)\big) &= \dfrac{\partial}{\partial \rho} \big(\rho R_c(y)\big) = R_c(y) + \rho \dfrac{\partial R_c(y)}{\partial \rho} \\
&= R_c(y) - \rho \, \dfrac{R_c(y) e^{\rho R_c(y)}}{\eta + \rho e^{\rho R_c(y)}} =  \dfrac{\eta R_c(y)}{\eta + \rho e^{\rho R_c(y)}} > 0.
\end{align*}
Thus, the right side of \eqref{eq:rhon} increases with respect to $\rho$.  As $\rho \to 0^+$, the right side of \eqref{eq:rhon} approaches $0$, which is less than the left side because we assume $c > \la \E Y$.  As $\rho \to \infty$, $\Rn(y; \rho)$ approaches $0$ in such a way that
\[
\lim_{\rho \to \infty} e^{\rho \Rn(y; \rho)} = (1 + \tetn) + \eta y.
\]
Thus, the right side of \eqref{eq:rhon} approaches
\begin{align*}
n \la \int_0^\infty \left( \tetn + \eta y \right) S_{\Yn}(y) dy = \la \int_0^\infty (\tet + \eta t) \Sy(t) dt = \la \left( \tet \E Y + \dfrac{\eta}{2} \, \E(Y^2) \right),
\end{align*}
which is greater than $c - \la \E Y$, from the assumption in \eqref{eq:c_not_huge}.  It follows that \eqref{eq:rhon} has a unique positive solution $\rhon$.
\end{proof}

\subsection{Diffusion approximation}\label{sec:adj_coeff_diff}

The analog of \eqref{eq:rhon_HJB} for the maximum adjustment coefficient for the diffusion approximation is
\begin{equation}\label{eq:rhod_HJB}
-\kap + \la \sup_{R} \left[ \tet \E R + \eta \E(Y R) - \frac{\rho + \eta}{2} \, \E(R^2) \right] = 0,
\end{equation}
in which we maximize over admissible retention functions $R$.  In the following proposition, we show that the maximum adjustment coefficient for the diffusion approximation equals $\rhod$ from Theorem \ref{thm:psid}, with $\Rd$ the optimal retention function.

\begin{proposition}\label{prop:rhod}
The maximum adjustment coefficient $\rhod > 0$ for the diffusion approximation in \eqref{eq:Xd} uniquely solves \eqref{eq:rhod},  and the corresponding optimal retention function equals $\Rd$ in \eqref{eq:Rd}.
\end{proposition}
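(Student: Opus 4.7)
The plan is to reduce the supremum in \eqref{eq:rhod_HJB} to a pointwise optimization, carry out that maximization in closed form, and then match the resulting equation in $\rho$ to the definition \eqref{eq:rhod} of $\rhod$.

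First, for each fixed $\rho > 0$, the supremum in \eqref{eq:rhod_HJB} can be taken inside the expectations, so that for every $y \ge 0$ I maximize $j(R) := (\tet + \eta y) R - \frac{\rho+\eta}{2}\, R^2$ over $R \in [0, y]$. Since $j$ is strictly concave with $j'(0) = \tet + \eta y \ge 0$ and $j'(R) = \tet + \eta y - (\rho+\eta) R$, the maximizer is the interior critical point $(\tet + \eta y)/(\rho+\eta)$, truncated above by $y$. Writing $\check R(\cdot;\rho)$ for this pointwise maximizer,
\begin{equation*}
\check R(y;\rho) = \frac{\tet + \eta y}{\rho + \eta} \wedge y,
\end{equation*}
which agrees with $\Rd$ of \eqref{eq:Rd} when $\rho = \rhod$.

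Next, I substitute $\check R(\cdot;\rho)$ back into \eqref{eq:rhod_HJB} and use $\kap = (1+\tet)\la \E Y + \frac{\eta}{2}\la \E(Y^2) - c$ to rearrange the identity into
\begin{equation*}
c - \la \E Y = \la\, \E\!\left[\tet\,(Y-\check R) + \frac{\eta}{2}(Y-\check R)^2 + \frac{\rho}{2}\,\check R^{\,2}\right].
\end{equation*}
Writing each expectation as an integral against $\Sy$ via integration by parts (boundary terms vanishing thanks to the exponential tail of $\Sy$ in Remark \ref{rem:mgf} and $\check R(0;\rho)=0$), and splitting the integrals at the crossover $y = \tet/\rho$ according to which branch of $\check R$ is active, the right-hand side collapses to $\la \rho \int_0^\infty \check R(y;\rho)\,\Sy(y)\,dy$. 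This is exactly \eqref{eq:rhod}, so $\rho$ coincides with $\rhod$ and the corresponding pointwise maximizer is $\check R(\cdot;\rhod) = \Rd$.

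For existence and uniqueness of $\rhod$, set $h(\rho,y) = \rho\left(\frac{\tet+\eta y}{\rho+\eta} \wedge y\right)$. On each branch ($h = \rho y$ for $y \le \tet/\rho$, $h = \rho(\tet+\eta y)/(\rho+\eta)$ for $y > \tet/\rho$) the function $h$ is strictly increasing in $\rho$, and the two branches match continuously at $y = \tet/\rho$, so the right-hand side of \eqref{eq:rhod} is strictly increasing in $\rho$. It vanishes as $\rho \to 0^+$, while as $\rho \to \infty$, $h(\rho,y) \uparrow \tet + \eta y$ and the right-hand side tends to $\la\!\left(\tet\,\E Y + \frac{\eta}{2}\E(Y^2)\right)$, which strictly exceeds $c - \la \E Y$ by assumption \eqref{eq:c_not_huge}. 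The intermediate value theorem then yields a unique positive $\rhod$. The main obstacle is the integration-by-parts manipulation in the second paragraph: one must split the integral at $y = \tet/\rho$ on each branch and verify that the boundary contributions cancel. This is a direct but somewhat tedious analogue of the corresponding calculation in Proposition \ref{prop:rhon}, with the moment-generating-function term $n\gn(\rho;R)$ replaced by its quadratic Taylor expansion $\frac{\rho}{2}\E(R^2)$.
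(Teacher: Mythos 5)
Your proposal is correct and follows the same route as the paper: for each $\rho>0$, the concave pointwise maximization gives $\check R(y;\rho)=\frac{\tet+\eta y}{\rho+\eta}\wedge y$, substitution into \eqref{eq:rhod_HJB} (via the identity $c-\la\E Y = \la\E[\tet(Y-\check R)+\frac{\eta}{2}(Y-\check R)^2+\frac{\rho}{2}\check R^2]$ and integration by parts against $\Sy$, which one can verify collapses the integrand to $\rho\check R(y;\rho)$ on each branch with continuous matching at $y=\tet/\rho$) reproduces \eqref{eq:rhod}, and monotonicity in $\rho$ together with the limits $0$ and $\la(\tet\E Y+\frac{\eta}{2}\E(Y^2))>c-\la\E Y$ give existence and uniqueness. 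The paper proves this proposition by deferring to the argument of Proposition \ref{prop:rhon} and omitting the details; you have supplied exactly those omitted details, correctly.
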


\begin{proof}
By following the argument in the proof of Proposition \ref{prop:rhon}, for a given value of $\rho > 0$, the retention function maximizing the expression in \eqref{eq:rhod_HJB} equals
\[
R(y; \rho) = \dfrac{\tet + \eta y}{\rho + \eta} \wedge y.
\]
By substituting this expression into \eqref{eq:rhod_HJB}, we obtain equation \eqref{eq:rhod} for $\rhod$.  The remainder of the proof is similar to, but simpler than, the proof of Proposition \ref{prop:rhon}, so we omit the details.
\end{proof}

As we expect from the maximization problems in \eqref{eq:rhon_HJB} and \eqref{eq:rhod_HJB}, the maximum adjustment coefficients $\rhon$ and $\rhod$ are related, which we prove in the next section.

\subsection{Relationship between $\rhon$ and $\rhod$}\label{sec:relation}

In this section, we prove two lemmas that relate $\rhon$ and $\rhod$.  We use those lemmas in Section \ref{sec:bounds} to modify $\psid$ by function of order $\mO(n^{-1/2})$ to bound $\psin$.

In the first lemma, we prove $\rhon < \rhod$.

\begin{lemma}\label{lem:rhon_le_rhod}
The maximum adjustment coefficient for the $n$-scaled risk process is less than the maximum adjustment coefficient for the diffusion approximation, that is, $\rhon < \rhod$.
\end{lemma}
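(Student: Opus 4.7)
The plan is to reduce $\rhon < \rhod$ to the strict inequality $F_d(\rhon) > 0$, where
\[
F_d(\rho) := -\kap + \la \sup_{R} \left[ \tet \, \E R + \eta \, \E(YR) - \tfrac{\rho + \eta}{2}\,\E(R^2) \right]
\]
is the left-hand side of \eqref{eq:rhod_HJB} viewed as a function of $\rho$. By Proposition \ref{prop:rhod}, $F_d(\rhod) = 0$, and $F_d$ is strictly decreasing on $(0,\infty)$: from the explicit diffusion maximizer $\Rd$ in \eqref{eq:Rd}, which is strictly positive for $y>0$, one sees that for $\rho_1 < \rho_2$ the integrand in the sup strictly decreases in $\rho$ at the maximizer, hence $F_d(\rho_2) < F_d(\rho_1)$. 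So once $F_d(\rhon) > 0 = F_d(\rhod)$ is established, monotonicity yields $\rhon < \rhod$.

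The engine is the pointwise Taylor inequality $e^u > 1 + u + \tfrac{u^2}{2}$ for $u > 0$. Applied with $u = \rho R(y)/\sqrtn$ and integrated against $F_Y$, this gives
\[
n\, g_n(\rho; R) \;=\; \frac{n}{\rho}\, \E\!\left(e^{\rho R/\sqrtn} - 1 - \frac{\rho R}{\sqrtn}\right) \;>\; \frac{\rho}{2}\, \E(R^2)
\]
whenever $R > 0$ on a set of positive $F_Y$-measure. I apply this with $R^* := \check R(\,\cdot\,; \rhon)$ from \eqref{eq:Rrho}. By Proposition \ref{prop:rhon}, $R^*$ is admissible and attains the supremum in \eqref{eq:rhon_HJB}, so
\[
\tet\, \E R^* + \eta \, \E(Y R^*) - \frac{\eta}{2}\,\E((R^*)^2) - n \, g_n(\rhon; R^*) \;=\; \kap/\la.
\]
Moreover, $R^*(y) > 0$ for every $y > 0$: on the lower branch $R^* = y$, while on the upper branch \eqref{eq:checkRc} combined with $\tet > 0$ forces $\check R_c > 0$. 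Since $\E Y > 0$, the random variable $R^*$ is nontrivial, so the strict Taylor bound applies. Adding $n \, g_n(\rhon; R^*) > \tfrac{\rhon}{2}\,\E((R^*)^2)$ and rearranging gives
\[
\tet\, \E R^* + \eta \, \E(Y R^*) - \frac{\rhon + \eta}{2}\,\E((R^*)^2) \;>\; \kap/\la.
\]
Because admissibility of retention functions does not depend on whether one is in the scaled CL or the diffusion setting, $R^*$ is a feasible candidate in \eqref{eq:rhod_HJB}, so the supremum there at $\rho = \rhon$ strictly exceeds $\kap/\la$; equivalently, $F_d(\rhon) > 0$, completing the argument.

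The main obstacle is securing strictness in the Taylor comparison: the inequality $e^u - 1 - u > u^2/2$ is strict only for $u > 0$, so one needs $R^*$ to be positive on a set of positive $F_Y$-measure, which is why I invoke the explicit form of $R^*$ in \eqref{eq:Rrho} rather than working with an arbitrary admissible $R$. A secondary technicality is the strict monotonicity of $F_d$, handled by the sensitivity argument sketched above, which is of the same flavor as the uniqueness argument at the end of the proofs of Propositions \ref{prop:rhon} and \ref{prop:rhod}.
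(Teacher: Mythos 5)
Your argument is essentially the paper's: both reduce the claim to the strict Taylor bound $e^u > 1 + u + u^2/2$ applied to the $n$-model maximizer, which shows that $\rho \mapsto -\kap + \la\bigl[\tet\,\E R + \eta\,\E(YR) - \tfrac{\rho+\eta}{2}\E(R^2)\bigr]$ is still positive at $\rho = \rhon$ and hence vanishes only at some larger $\rho$. The paper packages this as $\rhon(R) < \rhod(R)$ for every admissible $R$ before specializing to $\check R$, whereas you work directly with the optimized quantity $F_d$, but the core inequality and its use are the same; your one small inaccuracy is attributing the strict positivity of $\check R_c$ to $\tet > 0$ alone, when it follows from \eqref{eq:checkRc} as long as $\tet > 0$ or $\eta > 0$ (and at least one must be positive since $\kap>0$).
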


\begin{proof}
For a given admissible retention function $R$, let $\rhon(R)$ and $\rhod(R)$ denote the solutions of \eqref{eq:rhon_HJB} and \eqref{eq:rhod_HJB}, respectively, with the $\sup_R$ removed.  We begin by showing that $\rhon(R) < \rhod(R)$.  (As an aside, the left side of both \eqref{eq:rhon_HJB} and \eqref{eq:rhod_HJB} decrease from a positive number to negative infinity as $\rho$ increases from $0$ to infinity.  Therefore, positive solutions $\rhon(R)$ and $\rhod(R)$ exist.)

$\rhod(R) > 0$ solves
\begin{equation}\label{eq:rhod_R}
-\kap + \la \left[ \tet \E R + \eta \E(Y R) - \frac{\rho + \eta}{2} \, \E(R^2) \right] = 0.
\end{equation}
Because the left side of this equation decreases with respect to $\rho$, then $\rhon(R) < \rhod(R)$ if and only if
\[
-\kap + \la \left[ \tet \E R + \eta \E(Y R) - \frac{\rhon(R) + \eta}{2} \, \E(R^2) \right] > 0.
\]
By using \eqref{eq:rhon_R} and canceling a factor of $\la > 0$, this inequality becomes
\[
n \, \gn(\rhon(R); R) - \dfrac{\rhon(R)}{2} \, \E(R^2) > 0,
\]
or equivalently,
\[
\dfrac{n}{\rhon(R)} \, \E \bigg( e^{\frac{\rhon(R) \cdot R}{\sqrtn}}  - 1 - \frac{\rhon(R) \cdot R}{\sqrtn} - \frac{\rhon^2(R) \cdot R^2}{n} \bigg) > 0,
\]
which is true because $e^x > 1 + x + x^2$ for $x > 0$.

Thus, we have shown $\rhon(R) < \rhod(R)$ for all admissible retention function $R$.  Now, let $R = \check R$ from \eqref{eq:Rrho}; then, we have
\[
\rhon = \rhon(\check R) < \rhod(\check R) \le \rhod,
\]
and we have proved this lemma.
\end{proof}

In the following lemma, we show that we can modify $\rhod$ by a constant of order $\mO(n^{-1/2})$ to get a lower bound of $\rhon$.

\begin{lemma}\label{lem:rhod_le_rhon}
Choose $C$ so that
\begin{equation}\label{eq:C}
C > \dfrac{\E(\Rd^3)}{3 \, \E(\Rd^2)} \, \rhod^2.
\end{equation}
Then, there exists $N > 0$ such that, for all $n \ge N$,
\begin{equation}\label{eq:rhod_le_rhon}
0 < \rhod - \dfrac{C}{\sqrtn} < \rhon < \rhod,
\end{equation}
from which it follows that
\begin{equation}\label{eq:rhon_lim}
\lim \limits_{n \to \infty} \rhon = \rhod,
\end{equation}
with rate of convergence of order $\mO(n^{-1/2})$.
\end{lemma}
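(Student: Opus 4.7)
The upper bound $\rhon < \rhod$ is already provided by Lemma \ref{lem:rhon_le_rhod}, so my task is to establish $\rhon > \rhod - C/\sqrtn$ for $n$ large. The plan is to use $\Rd$ from Theorem \ref{thm:psid} as a test retention function: because $\rhon = \sup_R \rhon(R)$ by the definition given before \eqref{eq:rhon_HJB_0}, it suffices to bound $\rhon(\Rd)$ from below. First, I plug $R = \Rd$ into the defining equation \eqref{eq:rhon_R} for $\rhon(\Rd)$ and into the equation \eqref{eq:rhod_R} (which holds at $R = \Rd$ since $\Rd$ is the diffusion-optimal retention function by Proposition \ref{prop:rhod}), then subtract to obtain the key identity
\[
n\, \gn(\rhon(\Rd);\Rd) \;=\; \tfrac{\rhod}{2}\,\E(\Rd^2).
\]

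Next, I Taylor-expand $\gn$ via its definition \eqref{eq:gn} to write
\[
n\, \gn(\rho;\Rd) \;=\; \tfrac{\rho}{2}\,\E(\Rd^2) \;+\; \tfrac{\rho^2}{6\sqrtn}\,\E(\Rd^3) \;+\; r_n(\rho),
\]
where $r_n(\rho) \ge 0$ collects the terms of order $\ge 4$. Substituting $\rho = \rhon(\Rd) = \rhod - \delta_n$ (with $\delta_n > 0$ by Lemma \ref{lem:rhon_le_rhod}) into the key identity and cancelling yields
\[
\tfrac{1}{2}\,\E(\Rd^2)\,\delta_n \;=\; \tfrac{(\rhod - \delta_n)^2}{6\sqrtn}\,\E(\Rd^3) \;+\; r_n(\rhod - \delta_n) \;\le\; \tfrac{\rhod^2}{6\sqrtn}\,\E(\Rd^3) \;+\; \mO(1/n).
\]
Solving for $\delta_n$ gives $\delta_n \le \frac{\rhod^2\,\E(\Rd^3)}{3\sqrtn\,\E(\Rd^2)} + \mO(1/n)$, so for any $C$ strictly exceeding $\frac{\rhod^2\,\E(\Rd^3)}{3\,\E(\Rd^2)}$ there exists $N$ such that $\delta_n < C/\sqrtn$ for all $n \ge N$. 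Combined with $\rhon \ge \rhon(\Rd)$ and Lemma \ref{lem:rhon_le_rhod}, this produces the two-sided bound \eqref{eq:rhod_le_rhon}, from which \eqref{eq:rhon_lim} with rate $\mO(n^{-1/2})$ is immediate.

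The main obstacle is controlling the remainder $r_n$ uniformly in $n$ on a bounded interval $\rho \in [0,\rhod]$. I would use the elementary inequality $e^t - 1 - t - t^2/2 - t^3/6 \le t^4 e^t/24$ for $t \ge 0$ to obtain $r_n(\rho) \le \frac{\rho^3}{24 n}\,\E\!\big(\Rd^4\, e^{\rho\Rd/\sqrtn}\big)$. Since $\Rd \le Y$ and $\Sy$ has an exponentially decreasing right tail by Remark \ref{rem:mgf}, all moments $\E(\Rd^k)$ are finite, and for $n$ large enough that $\rhod/\sqrtn < t_0$ the expectation $\E(\Rd^4 e^{\rho\Rd/\sqrtn})$ is uniformly bounded in $\rho \in [0,\rhod]$, which gives $r_n(\rho) = \mO(1/n)$ as required. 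With this remainder estimate in hand, the rest of the argument is elementary algebra.
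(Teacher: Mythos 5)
Your proposal is correct and takes essentially the same approach as the paper: both use $\Rd$ as a test retention function, reduce the problem to the scalar equation $n\gn(\rhon(\Rd);\Rd) = \tfrac{\rhod}{2}\E(\Rd^2)$, and Taylor-expand $\gn$ to cubic order with a fourth-order remainder bounded via the exponential tail from Remark \ref{rem:mgf}. The only cosmetic difference is that you solve for the gap $\delta_n = \rhod - \rhon(\Rd)$ directly, whereas the paper verifies that the candidate $\rho = \rhod - C/\sqrtn$ makes the left side of \eqref{eq:lem32_1} positive; these are equivalent.
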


\begin{proof}
The limit in \eqref{eq:rhon_lim} follows directly from the bounds in \eqref{eq:rhod_le_rhon}.  The third inequality in \eqref{eq:rhod_le_rhon} follows from Lemma \ref{lem:rhon_le_rhod}; thus, it remains to show the first and second inequalities in \eqref{eq:rhod_le_rhon}.  To that end, consider the admissible retention strategy $\Rd$ in \eqref{eq:Rd}.  Then, from \eqref{eq:rhon_R} and \eqref{eq:gn}, we know $\rhon(\Rd)$ solves
\begin{equation}\label{eq:lem32_0}
- \kap + \la \left[ \tet \E \Rd + \eta \E(Y \Rd) - \dfrac{\eta}{2} \, \E(\Rd^2) - \dfrac{n}{\rho} \left( \E e^{\frac{\rho}{\sqrtn} \Rd} - 1 - \frac{\rho}{\sqrtn} \, \E \Rd \right) \right] = 0,
\end{equation}
or equivalently, from the expression for $\rhod$ in \eqref{eq:rhod_HJB}, $\rhon(\Rd)$ solves
\begin{equation}\label{eq:lem32_1}
\dfrac{\E(\Rd^2)}{2} \, \rhod - n \gn(\rho; \Rd) = 0 
\end{equation}
The left side of this equation decreases with respect to $\rho$; thus, to show that
\[
\rhod - \dfrac{C}{\sqrtn} < \rhon(\Rd)
\]
for some value of $C$, it is enough to show that the left side of \eqref{eq:lem32_1} is positive when we set $\rho = \rhod - \frac{C}{\sqrtn}$.  That is, we want to show there exist $C$ and $N$ such that
\[
\dfrac{\E(\Rd^2)}{2} \, \rhod > n \gn(\rhod - C/\sqrtn; \Rd),
\]
for all $n \ge N$, with $\rhod - \frac{C}{\sqrtn} > 0$, or equivalently,
\begin{equation}\label{eq:lem32_2}
\dfrac{\E(\Rd^2)}{2} \, \rhod > \dfrac{n}{\rhod - C/\sqrtn} \left( \E e^{\frac{\rhod - C/\sqrtn}{\sqrtn} \Rd} - 1 - \frac{\rhod - C/\sqrtn}{\sqrtn} \, \E \Rd \right).
\end{equation}

To simplify the right side of inequality \eqref{eq:lem32_2}, we use the following identity:
\[
e^x - 1 - x = \dfrac{x^2}{2} + \dfrac{x^3}{6} + \dfrac{x^4}{6} \int_0^1 (1 - \ome)^3 e^{\ome x} d\ome.
\]
Then, inequality \eqref{eq:lem32_2} is equivalent to
\begin{align*}
&\dfrac{\E(\Rd^2)}{2} \, \rhod \left( \rhod - \frac{C}{\sqrtn} \right) \\
&> \E \bigg( \dfrac{(\rhod - C/\sqrtn)^2 \Rd^2}{2} + \dfrac{(\rhod - C/\sqrtn)^3 \Rd^3}{6\sqrtn} + \dfrac{(\rhod - C/\sqrtn)^4 \Rd^4}{6n} \int_0^1 (1 - \ome)^3 e^{\ome \frac{\rhod - C/\sqrtn}{\sqrtn} \Rd} d \ome \bigg),
\end{align*}
or
\[
\E(\Rd^2) \, \dfrac{C}{\sqrtn} > \E \bigg( \dfrac{(\rhod - C/\sqrtn)^2 \Rd^3}{3 \sqrtn} + \dfrac{(\rhod - C/\sqrtn)^3 \Rd^4}{3n} \int_0^1 (1 - \ome)^3 e^{\ome \frac{\rhod - C/\sqrtn}{\sqrtn} \Rd} d \ome \bigg),
\]
which holds if the following stronger inequality holds:
\[
\E(\Rd^2) \, C > \E \bigg( \dfrac{\rhod^2 \Rd^3}{3} + \dfrac{\rhod^3 \Rd^4}{3 \sqrtn} \, e^{\frac{\rhod}{\sqrtn} \Rd} \bigg),
\]
or
\begin{equation}\label{eq:lem32_3}
\E(\Rd^2) \, C - \rhod^2 \, \dfrac{\E(\Rd^3)}{3} > \dfrac{\rhod^3}{3 \sqrtn} \, \E \Big(\Rd^4 \, e^{\frac{\rhod}{\sqrtn} \Rd} \Big).
\end{equation}
If we choose $C$ according to \eqref{eq:C}, then there exists $N > 0$ such that $\rhod - C/\sqrt{N} > 0$ and such that inequality \eqref{eq:lem32_3} holds at $n = N$.  Because the right side of \eqref{eq:lem32_3} decreases with $n$, it follows that inequality \eqref{eq:lem32_3} holds for all $n \ge N$.

We have, thereby, shown that
\begin{equation}\label{eq:rhod_le_rhon_Rd}
0 < \rhod - \dfrac{C}{\sqrtn} < \rhon(\Rd),
\end{equation}
for all $n \ge N$, and we know that $\rhon(\Rd) \le \rhon$ because $\rhon$ is maximal.  Thus, we have proven the first two inequalities in \eqref{eq:rhod_le_rhon}.
\end{proof}

\section{Justifying the diffusion approximation}\label{sec:diff_approx}

\subsection{Bounds for $\psin$}\label{sec:bounds}

In this section, we modify $\psid$ by functions of order $\mO(n^{-1/2})$ to obtain upper and lower bounds of $\psin$.  In the process of finding an upper bound of $\psin$, we obtain a type of Lundberg bound for $\psin$.  To that end, by analogy with the expression for $\psid$ in \eqref{eq:psid}, define $\opsin$ for $x \le m$ and $m \ge 0$ as follows:
\begin{equation}\label{eq:opsin}
\opsin(x, m) =
\begin{cases}
1 - \hn(m) \left( 1 - e^{-\rhon(x - \alp m)} \right), &\quad (x, m) \in \mD,\\
1, &\quad x < \alp m,
\end{cases}
\end{equation}
in which $\hn$ is defined by
\begin{equation}\label{eq:hn}
\hn(m) = \left( 1 - e^{-\rhon(1 - \alp)m} \right)^{\frac{\alp}{1 -\alp}},
\end{equation}
for all $m \ge 0$.  In the following lemma, we use Theorem \ref{thm:super} to prove that $\opsin$ is an upper bound of $\psin$.

\begin{lemma}\label{lem:opsin_upper}
For all $(x, m) \in \mD$,
\begin{equation}\label{eq:opsin_upper}
\psin(x, m) \le \opsin(x, m),
\end{equation}
in which $\psin$ is the minimum probability of drawdown for the $n$-scaled model, and $\opsin$ is defined in \eqref{eq:opsin}.
\end{lemma}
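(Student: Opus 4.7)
The plan is to apply the scaled version of Theorem \ref{thm:super} (with $\mLn$ replacing $\mL$) to $v = \opsin$ with the candidate constant retention strategy $\hR = \Rn^\rho$ furnished by Proposition \ref{prop:rhon}; the lemma then follows immediately. Boundedness of $\opsin$ on $\R^2$ and its $\mC^{1,1}$-regularity on $\mD$ are immediate from \eqref{eq:opsin}--\eqref{eq:hn}, and condition (ii) is built into the piecewise definition. For condition (i), a direct computation gives $\opsin(m,m) = 1 - \bigl(1 - e^{-\rhon(1-\alp)m}\bigr)^{1/(1-\alp)}$, which tends to $0$ as $m \to \infty$. For condition (iii), differentiating \eqref{eq:opsin} and \eqref{eq:hn} with respect to $m$ and then using the identity $\tfrac{2\alp-1}{1-\alp}+1 = \tfrac{\alp}{1-\alp}$ produces an exact cancellation showing $\partial_m \opsin(m,m) = 0$; indeed, $\hn$ is designed precisely so that this derivative vanishes at $x = m$, paralleling the construction of $\hd$ in \eqref{eq:hd}.

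The main step is condition (iv), $\mLn^{\Rn^\rho}\opsin(x,m) \le 0$ on $\mD$. The key trick is to introduce the unclamped extension
\[
u(x,m) := 1 - \hn(m)\bigl(1 - e^{-\rhon(x-\alp m)}\bigr) = \bigl(1 - \hn(m)\bigr) + \hn(m)\,e^{\rhon\alp m}\,e^{-\rhon x},
\]
defined for all $x \in \R$. On $\mD$ we have $u(x,m) = \opsin(x,m)$, while for $x < \alp m$ the exponent is positive and hence $u(x,m) > 1 = \opsin(x,m)$; consequently $\En\opsin(x - \Rn^\rho, m) \le \En u(x - \Rn^\rho, m)$, which yields $\mLn^{\Rn^\rho}\opsin(x,m) \le \mLn^{\Rn^\rho} u(x,m)$ on $\mD$, where the latter operator is evaluated using the unclamped formula. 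The second expression for $u$ shows it is affine in $e^{-\rhon x}$ for fixed $m$; since $\mLn$ annihilates constants, $\mLn^{\Rn^\rho} u(x,m)$ factors as $\hn(m)\,e^{-\rhon(x-\alp m)}$ times exactly the bracketed expression of \eqref{eq:rhon_HJB_0} evaluated at $(\rho,\Rn) = (\rhon, \Rn^\rho)$, which vanishes by the very definition of $\rhon$ as the maximum adjustment coefficient.

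The principal obstacle is the bookkeeping around the clamping: arranging the inequality $\opsin \le u$ so that it points in the direction required for a supersolution argument (as above). A side check along the way is the $n$-admissibility of $\Rn^\rho$ itself, which I would verify by combining \eqref{eq:rhon_HJB_0} with the strict convexity bound $M_{\Rn^\rho}(\rhon) - 1 > \rhon\,\En\Rn^\rho$, valid because $\Rn^\rho(\Yn) > 0$ almost surely; a short rearrangement then produces condition (iii$'$) in \eqref{eq:iii_prime}.
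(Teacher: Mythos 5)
Your proof is correct and takes essentially the same approach as the paper: apply the $n$-scaled supersolution theorem (Theorem~\ref{thm:super}) with the constant retention strategy built from $\Rn^\rho$, and verify conditions (i)--(iv). The only difference is organizational: for condition (iv) the paper expands $\mLn^{\Rn^\rho}\opsin$ directly and lands on the nonpositive term $-\lan\hn(m)\,\En\big((e^{\rhon(\alp m - (x - \Rn^\rho))} - 1)\mathds{1}_{\{x - \Rn^\rho < \alp m\}}\big)$, whereas you compare $\opsin$ with the unclamped Lundberg-type function $u$ satisfying $\mLn^{\Rn^\rho}u = 0$ by the definition of $\rhon$ — the paper's residual term is exactly $\mLn^{\Rn^\rho}\opsin - \mLn^{\Rn^\rho}u$, so the two computations are the same, and your phrasing is arguably cleaner because it isolates why the clamping to $1$ for $x<\alp m$ can only help the supersolution inequality.
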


\begin{proof}
We prove this lemma via Theorem \ref{thm:super} modified to account for the $n$-scaled model, which essentially replaces $\mL^{\hR}$ in condition (iv) with $\mLn^{\hR_n}$, given in \eqref{eq:mLn}.  First, note that $\opsin \in \mC^{1,1}(\mD)$ by its definition.  Next, we go through each condition in Theorem \ref{thm:super} in turn.

\medskip

\noindent {\it Condition} (i):
\begin{align*}
\lim_{m \to \infty} \opsin(m, m) &= \lim_{m \to \infty} \left( 1 - \hn(m) \left( 1 - e^{-\rhon(1 - \alp)m} \right) \right) \\
&= 1 - \lim_{m \to \infty} \left( 1 - e^{-\rhon(1 - \alp)m} \right)^{\frac{\alp}{1 -\alp} + 1} = 1 - 1 = 0,
\end{align*}
so condition (i) is satisfied with equality.

\medskip

\noindent {\it Condition} (ii): This condition, namely, that $\opsin(x, m)$ is defined for all $x \le m$ and $m \ge 0$, with $\opsin(x, m) = 1$ for all $x < \alp m$, is satisfied by the definition of $\opsin$ in \eqref{eq:opsin}.

\medskip

\noindent {\it Condition} (iii):  For $m > 0$, differentiate $\opsin$ with respect to $m$ and simplify the expression to obtain
\begin{align*}
(\opsin)_m(x, m) = \dfrac{\alp \rhon \hn(m)}{1 - e^{-\rhon(1 - \alp)m}} \left(e^{-\rhon(x - \alp m)} - e^{-\rhon(1 - \alp)m}  \right),
\end{align*}
which equals $0$ when $x = m$.  Therefore, condition (iii) is satisfied with equality.

\medskip

\noindent {\it Condition} (iv):  Let $\hR_n$ be the $n$-admissible retention function $\Rn^\rho$ that maximizes the adjustment coefficient.  Then, by \eqref{eq:rhon_HJB_0}, $\Rn^\rho$ and $\rhon$ satisfy
\begin{align*}
\lan \left( \En e^{\rhon \Rn^\rho} - 1 \right) = \left[- \kapn + \lan \left( (1 + \tetn) \En \Rn^\rho + \eta \En(\Yn \Rn^\rho) - \frac{\eta}{2} \, \En((\Rn^\rho)^2) \right) \right] \rhon.
\end{align*}
Then, for $(x, m) \in \mD$, the expression in \eqref{eq:mLn} gives us
\begin{align*}
\mLn^{\Rn^\rho} \opsin(x, m) &= \left[- \kapn + \lan \left( (1 + \tetn) \En \Rn^\rho + \eta \En(\Yn \Rn^\rho) - \frac{\eta}{2} \, \En((\Rn^\rho)^2) \right) \right]  (\opsin)_x(x, m) \\
&\quad + \lan\big(  \En \opsin(x - \Rn^\rho, m) - \opsin(x, m) \big) \\
&= \left[- \kapn + \lan \left( (1 + \tetn) \En \Rn^\rho + \eta \En(\Yn \Rn^\rho) - \frac{\eta}{2} \, \En((\Rn^\rho)^2) \right) \right] \big(- \rhon \hn(m) e^{-\rhon(x - \alp m)} \big) \\
&\quad + \lan \left[ \En \Big( \left(1 - \hn(m) \left( 1 - e^{-\rhon(x - \Rn^\rho - \alp m)} \right)\right) \mathds{1}_{\{ x - \Rn^\rho \ge \alp m\}} \Big) \right] \\
&\quad + \lan \left[ \En \big( \mathds{1}_{\{ x - \Rn^\rho < \alp m\}} \big) - \left(1 - \hn(m) \left( 1 - e^{-\rhon(x - \alp m)} \right) \right) \right] \\
&= - \lan \hn(m) e^{-\rhon(x - \alp m)} \left( \En e^{\rhon \Rn^\rho} - 1 \right) + \lan \hn(m) \left( 1 - \En\big( \mathds{1}_{\{ x - \Rn^\rho \ge \alp m\}} \big) \right) \\
&\quad + \lan \hn(m) e^{-\rhon(x - \alp m)} \left( \En \big( e^{\rhon \Rn^\rho} \mathds{1}_{\{ x - \Rn^\rho \ge \alp m\}} \big) - 1 \right) \\
&= \lan \hn(m) \left\{ - \En \big( e^{-\rhon(x - \Rn^\rho - \alp m)} \mathds{1}_{\{ x - \Rn^\rho < \alp m\}} \big) + \En\big( \mathds{1}_{\{ x - \Rn^\rho < \alp m\}} \big) \right\} \\
&= - \lan \hn(m) \En \Big( \big( e^{\rhon(\alp m - (x - \Rn^\rho))} - 1 \big) \mathds{1}_{\{ x - \Rn^\rho < \alp m\}} \Big) \\
&\le 0.
\end{align*}
Thus, we have proved condition (iv).  Theorem \ref{thm:super}, then, implies inequality \eqref{eq:opsin_upper} on $\mD$.
\end{proof}

We obtain the following proposition from Lemmas \ref{lem:rhod_le_rhon} and \ref{lem:opsin_upper}, in which we modify $\psid$ in \eqref{eq:psid} by a function of order $\mO(n^{-1/2})$ to obtain an upper bound of $\psin$.

\begin{proposition}\label{prop:psin_upper}
Let $C$ and $N$ be as in the statement of Lemma {\rm \ref{lem:rhod_le_rhon}}, and for $n \in \N$, define $\un$ by
\begin{equation}\label{eq:un}
\un(x, m) =
\begin{cases}
1 - \kn(m) \left( 1 - e^{-(\rhod - C/\sqrtn)(x - \alp m)} \right), &\quad (x, m) \in \mD,\\
1, &\quad x < \alp m,
\end{cases}
\end{equation}
in which $\kn$ is defined by
\begin{equation}\label{eq:kn}
\kn(m) = \left( 1 - e^{-(\rhod - C/\sqrtn)(1 - \alp)m} \right)^{\frac{\alp}{1 -\alp}},
\end{equation}
Then, for $n \ge N$,
\begin{equation}\label{eq:psin_upper}
\psin \le \un,
\end{equation}
on $\mD$.
\end{proposition}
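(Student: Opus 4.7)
The plan is to obtain this proposition as a straightforward consequence of Lemmas \ref{lem:rhod_le_rhon} and \ref{lem:opsin_upper}, together with a simple monotonicity observation in the exponential rate parameter. The key point is that $u_n$ and $\opsin$ have identical functional forms in $(x,m)$, differing only in which positive constant plays the role of the adjustment rate: $\opsin$ uses $\rhon$, while $u_n$ uses $\rhod - C/\sqrtn$. Since Lemma \ref{lem:rhod_le_rhon} gives $0 < \rhod - C/\sqrtn < \rhon$ for $n \ge N$, it will suffice to show that the function is decreasing in the rate parameter.

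Concretely, I would proceed as follows. First, introduce the auxiliary function
\begin{equation*}
\var(\rho; x, m) = 1 - \left(1 - e^{-\rho(1-\alp)m}\right)^{\frac{\alp}{1-\alp}} \left(1 - e^{-\rho(x - \alp m)}\right)
\end{equation*}
for $\rho > 0$ and $(x,m) \in \mD$ with $m > 0$. By construction $\opsin(x,m) = \var(\rhon; x, m)$ and $u_n(x,m) = \var(\rhod - C/\sqrtn; x, m)$ on $\mD$ (and both equal $1$ when $x < \alp m$ or $m = 0$). Then verify that $\rho \mapsto \var(\rho; x, m)$ is nonincreasing: for each fixed $t > 0$, the map $\rho \mapsto 1 - e^{-\rho t}$ is nonnegative and nondecreasing in $\rho$, so applied with $t = (1-\alp)m$ and $t = x - \alp m \ge 0$, both factors in the product defining $\var$ are nonnegative and nondecreasing in $\rho$, making their product nondecreasing in $\rho$ and hence $\var$ nonincreasing in $\rho$.

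Now combine the pieces. By Lemma \ref{lem:rhod_le_rhon}, for $n \ge N$ we have $\rhod - C/\sqrtn < \rhon$, and by the monotonicity just established this gives $u_n(x,m) = \var(\rhod - C/\sqrtn; x, m) \ge \var(\rhon; x, m) = \opsin(x,m)$ for all $(x,m) \in \mD$ with $m > 0$. Lemma \ref{lem:opsin_upper} then yields $\psin(x,m) \le \opsin(x,m) \le u_n(x,m)$ on that set. The boundary cases are immediate: at $m = 0$ we have $x = 0$ (since $(x,m) \in \mD$) and both $\opsin$ and $u_n$ collapse to $1$; for $x < \alp m$ both are defined to equal $1$.

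There is no real obstacle here — the only item requiring any verification is the monotonicity of $\var$ in $\rho$, which reduces to the elementary fact that $\rho \mapsto 1 - e^{-\rho t}$ is nonnegative and nondecreasing. The proof is essentially a bookkeeping exercise wrapped around the two earlier lemmas, which is precisely why the constant $C$ in Lemma \ref{lem:rhod_le_rhon} was selected to give a lower bound on $\rhon$ of exactly the form $\rhod - C/\sqrtn$.
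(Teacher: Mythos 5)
Your proof is correct and takes essentially the same approach as the paper: the paper simply asserts that $\opsin$ decreases with respect to the rate parameter and invokes Lemmas \ref{lem:rhod_le_rhon} and \ref{lem:opsin_upper} to conclude $\psin \le \opsin \le \un$; you spell out the monotonicity verification (via the auxiliary function $\var$ and the elementary observation that $\rho \mapsto 1 - e^{-\rho t}$ is nonnegative and nondecreasing for $t \ge 0$) and handle the boundary cases, which the paper leaves implicit.
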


\begin{proof}
It is straightforward to show that $\opsin$ in \eqref{eq:opsin} decreases with respect to $\rhon > 0$ on $\mD$.  Thus, if we replace $\rhon$ in $\opsin$'s definition with a (positive) parameter less than $\rhon$, then we get a function that is an upper bound of $\opsin$.  That is exactly how we defined $\un$ in \eqref{eq:un} because, from Lemma \ref{lem:rhod_le_rhon}, we know $0 < \rhod - C/\sqrtn < \rhon$ for all $n \ge N$; thus, we have, from Lemma \ref{lem:opsin_upper},
\[
\psin \le \opsin \le \un,
\]
on $\mD$.
\end{proof}

In the following proposition, we modify $\psid$ to obtain a lower bound of $\psin$.

\begin{proposition}\label{prop:psin_lower}
Formally, define the random variable $\Zd = (Y - d) \big| (Y > d)$ for $d \ge 0$, and suppose $\varsigma$ exists such that $\My \big(\rhod/\sqrt{\varsigma} \, \big) < \infty$, with
\begin{equation}\label{eq:Zd}
\sup \limits_{d \ge 0} \E \Big( e^{\frac{\rhod}{\sqrt{\varsigma}} \Zd} \Big) < \infty.
\end{equation}
Choose $\eps > 0$, and define $\del$ by
\begin{equation}\label{eq:del}
\del = \sup \limits_{d \ge 0} \big( \rhod \E \Zd + \eps \big),
\end{equation}
and choose $N > \max \big( \del^2, 4 \varsigma \big)$ such that\footnote{Condition \eqref{eq:Zd} implies that we can find such an $N$.  Indeed, for $z$ large enough, we have
\[
z^2 < \dfrac{e^{\frac{\rhod z}{\sqrt{\varsigma}}}}{e^{\frac{\rhod z}{2\sqrt{\varsigma}}}} = e^{\frac{\rhod z}{2\sqrt{\varsigma}}},
\]
which implies there exists $M > 0$ such that
\[
\E\Big( \Zd^2 \, e^{\frac{\rhod}{2\sqrt{\varsigma}} \Zd} \Big) \le \E\Big( e^{\frac{\rhod}{\sqrt{\varsigma}} \Zd}\mathds{1}_{\{\Zd>M\}}  \Big)
+ \E\Big( M^2 e^{\frac{\rhod}{\sqrt{\varsigma}} M}\mathds{1}_{\{\Zd \le M\}}  \Big).
\]}
\begin{equation}\label{eq:N}
\sup \limits_{d \ge 0} \dfrac{\rhod^2}{\sqrt{N}} \, \E\Big( \Zd^2 \, e^{\frac{\rhod}{\sqrt{N}} \Zd} \Big) \le \eps.
\end{equation}
For $n \in \N$, define $\elln$ by
\begin{equation}\label{eq:elln}
\elln(x, m)=
\begin{cases}
\left(1 - \dfrac{\del}{\sqrtn} \right) \psid(x, m), &\quad (x, m) \in \mD,\\
1, &\quad x < \alp m.
\end{cases}
\end{equation}
Then, for all $n \ge N$,
\begin{equation}\label{eq:psio_scale}
\elln \le \psin,
\end{equation}
on $\mD$.
\end{proposition}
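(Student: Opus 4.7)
\medskip
\noindent\textbf{Proof plan.} The natural approach is to apply Theorem~\ref{thm:sub} (in its $n$-scaled version, with $\mL^R$ replaced by $\mLn^{\Rn}$) to the candidate subsolution $u=\elln$. Since $\elln$ is clearly in $\mC^{1,1}(\mD)$, extended by $1$ below $\alp m$, I would verify the four hypotheses in turn. Conditions (i) and (ii) are immediate: $\elln(m,m)=(1-\del/\sqrtn)\psid(m,m)\to 0$ as $m\to\infty$ because $\psid(m,m)\to 0$ from \eqref{eq:psid} (indeed $\hd(m)\to 1$), and the extension in (ii) is built into the definition of $\elln$. Condition (iii) also reduces to a property of $\psid$: a short differentiation of $\psid$ using the explicit formula for $\hd$ shows $(\psid)_m(m,m)=0$, hence $(\elln)_m(m,m)=0\ge 0$.

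\medskip
\noindent The main work is condition (iv): $\mLn^{\Rn}\elln(x,m)\ge 0$ for every $n$-admissible $\Rn$ and every $(x,m)\in\mD$. Using $R(y)=\sqrtn\,\Rn(y/\sqrtn)$ and $d=\sqrtn(x-\alp m)$, I would exploit the additive structure $\elln=(1-\del/\sqrtn)\psid+(\del/\sqrtn)\mathds{1}_{\{x<\alp m\}}$ to write
\[
\mLn^{\Rn}\elln(x,m)=(1-\del/\sqrtn)\,\mLn^{\Rn}\psid(x,m)+(\del/\sqrtn)\,\lan\,\pr(R>d),
\]
so the problem reduces to lower-bounding $\mLn^{\Rn}\psid$. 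Plugging the explicit form of $\psid$ into $\mLn^{\Rn}$, splitting the jump integral by $\{R\le d\}$ and $\{R>d\}$, and Taylor-expanding $e^{\rhod R/\sqrtn}$ gives the key identity
\[
\mLn^{\Rn}\psid=\mathcal{A}^R\psid+n\la\,\hd\,A\,r-n\la\,\hd\,\En\!\big[(AB-1)\mathds{1}_{\{R>d\}}\big],
\]
where $A=e^{-\rhod(x-\alp m)}$, $B=e^{\rhod R/\sqrtn}$, $\mathcal{A}^R$ is the generator of the diffusion \eqref{eq:Xd}, and $r=\En[B-1-\rhod R/\sqrtn-\rhod^2 R^2/(2n)]\ge 0$. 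The first term is nonnegative by the diffusion HJB (Theorem~\ref{thm:psid} and Proposition~\ref{prop:rhod}), the second by convexity of the exponential; the cancellation of the $\sqrtn$-scale drift comes from matching the first-order Taylor term against the extra $\sqrtn\la\E R$ piece in $\text{drift}_n$.

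\medskip
\noindent The crux is therefore to dominate the negative term $n\la\,\hd\,\En[(AB-1)\mathds{1}_{\{R>d\}}]$ by the boundary correction $(\del/\sqrtn)\lan\pr(R>d)$. Here I would exploit $R\le Y$ and $\{R>d\}\subseteq\{Y>d\}$ together with the overshoot structure: on $\{R>d\}$ one has $AB-1=e^{\rhod(R-d)/\sqrtn}-1\le e^{\rhod(Y-d)/\sqrtn}-1$, and since the latter is nonnegative on $\{Y>d\}$,
\[
\En\!\big[(AB-1)\mathds{1}_{\{R>d\}}\big]\le \Sy(d)\,\E\!\big[e^{\rhod \Zd/\sqrtn}-1\big].
\]
The inequality $e^x-1\le x+\tfrac{x^2}{2}e^x$ applied to $x=\rhod\Zd/\sqrtn\ge 0$, together with the definition of $\del$ in \eqref{eq:del} and the choice of $N$ in \eqref{eq:N}, gives the uniform bound $\E[e^{\rhod \Zd/\sqrtn}-1]\le(\del-\eps/2)/\sqrtn$ for $n\ge N$. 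This is the point at which all three conditions ($\sup_d\rhod\E\Zd+\eps\le\del$, finiteness of $\sup_d\E e^{\rhod \Zd/\sqrt{\varsigma}}$, and the smallness condition on $\rhod^2\E[\Zd^2 e^{\rhod\Zd/\sqrt{N}}]$) enter, with $\varsigma$ ensuring the required exponential moment is controlled uniformly in $d$.

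\medskip
\noindent I expect the hardest step to be closing the final comparison between the negative term (which my bound expresses in terms of $\Sy(d)$) and the boundary correction (which is proportional to $\pr(R>d)$). I would handle this by combining the Taylor-remainder term $(1-\del/\sqrtn)n\la\,\hd\,A\,r$, the positive contribution $(1-\del/\sqrtn)\mathcal{A}^R\psid$ (which is strictly positive precisely when $R$ is far from $\Rd$, i.e., precisely when $\pr(R>d)$ can fall below $\Sy(d)$ under admissibility), and the $(\del/\sqrtn)\lan\pr(R>d)$ boundary term; the identity $A\,\En[(B-1)\mathds{1}_{\{R>d\}}]-(1-A)\pr(R>d)=\En[(AB-1)\mathds{1}_{\{R>d\}}]$ is then used to rearrange into a form where each contribution can be sign-checked. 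The net result should be $\mLn^{\Rn}\elln\ge 0$ for $n\ge N$, and Theorem~\ref{thm:sub} then delivers the inequality $\elln\le\psin$ on $\mD$ for $n\ge N$, completing the proof of \eqref{eq:psio_scale}.
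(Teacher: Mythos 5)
Your overall strategy is the same as the paper's: apply the $n$-scaled version of Theorem~\ref{thm:sub} to $u=\elln$ and verify conditions (i)--(iv). Conditions (i)--(iii) you handle exactly as the paper does. For condition (iv), your linear decomposition $\elln=(1-\del/\sqrtn)\psid+(\del/\sqrtn)\mathds{1}_{\{x<\alp m\}}$ and the further split
\[
\mLn^{\Rn}\psid=\mathcal{A}^R\psid+n\la\,\hd\,A\,r-n\la\,\hd\,\En\big[(AB-1)\mathds{1}_{\{R>d\}}\big]
\]
are algebraically correct (I checked: the identity $A\,\En[(B-1)\mathds{1}_{\{R>d\}}]-(1-A)\pr(R>d)=\En[(AB-1)\mathds{1}_{\{R>d\}}]$ is right, $\mathcal{A}^R\psid\ge 0$ by \eqref{eq:rhod_HJB}, and $r\ge 0$ since $e^x\ge 1+x+x^2/2$ for $x\ge 0$). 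After you discard the two nonnegative pieces and collect the boundary terms you arrive at precisely the paper's target inequality \eqref{ieq:whR}:
\[
\E\Big[\big(e^{\rhod(R-d)/\sqrtn}-K-1\big)\mathds{1}_{\{R>d\}}\Big]\le 0,\qquad K=\frac{\del/\sqrtn}{\hd(m)(1-\del/\sqrtn)}.
\]
So up to this point the two arguments are the same computation organized differently.

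The genuine gap is in the step you yourself flag as the crux. You split the last inequality into two separate estimates, bounding the overshoot piece by $\Sy(d)\,\E[e^{\rhod \Zd/\sqrtn}-1]$ while leaving the boundary-correction piece proportional to $\pr(R>d)$, and then you must reconcile $\Sy(d)$ with $\pr(R>d)\le\Sy(d)$. The scheme you sketch — covering the deficit with the $O(1)$ term $\mathcal{A}^R\psid$ and the Taylor remainder $n\la\hd A r$ — is not quantitative: the deficit is of order $\sqrtn\,\Sy(d)$ while $\mathcal{A}^R\psid$ is $O(1)$ and $n\la\hd A r$ is $O(n^{-1/2})$, so they cannot close it in general. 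The idea you are missing is the one the paper uses: do not bound the two pieces of \eqref{ieq:whR} separately. Keep them together inside the single integrand $\big(e^{\rhod(R(y)-d)/\sqrtn}-K-1\big)\mathds{1}_{\{R(y)>d\}}$ and pass to $R(y)=y$ before estimating. Then both $\pr(R>d)$ and the overshoot term become $\Sy(d)$-weighted; the $\Sy(d)$ factor cancels and the condition reduces to $\E[e^{\rhod\Zd/\sqrtn}-1]\le K$, which is exactly what the choices of $\del$ in \eqref{eq:del} and $N$ in \eqref{eq:N} are engineered to deliver, uniformly in $d\ge 0$. Without this "set $R(y)=y$ inside the combined integrand" device (or an equivalent one) your plan stalls. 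When you fill in the details, do check the domination carefully: the integrand is not monotone in $R$ — it drops by $K$ at $R=d^+$ — so the claim that $R(y)=y$ is worst-case at every $y$ requires an argument, not merely the observation that $e^{\rhod(R-d)/\sqrtn}$ is increasing in $R$.
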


\begin{proof}
We prove this lemma via Theorem \ref{thm:sub} modified to account for the $n$-scaled model, which essentially replaces $\mL^{\hR}$ in condition (iv) with $\mLn^{\hR_n}$, given in \eqref{eq:mLn}.  First, note that $\elln \in \mC^{1,1}(\mD)$ by its definition.  Next, we go through each condition in Theorem \ref{thm:sub} in turn.

Without loss of generality, assume $n > \del^2$.

\medskip

\noindent {\it Condition} (i):
\begin{align*}
\lim_{m \to \infty} \elln(m, m) &= \left(1 - \dfrac{\del}{\sqrtn} \right) \lim_{m \to \infty} \psid(m, m) = 0,
\end{align*}
so condition (i) is satisfied with equality.

\medskip

\noindent {\it Condition} (ii):  This condition, namely, that $\elln(x, m)$ is defined for all $x \le m$ and $m \ge 0$, with $\elln(x, m) = 1$ for all $x < \alp m$, is satisfied by the definition of $\elln$ in \eqref{eq:elln}.

\medskip

\noindent {\it Condition} (iii):  For $m > 0$, differentiate $\elln$ with respect to $m$ and simplify the expression to obtain
\begin{align*}
(\elln)_m(x, m) = \left(1 - \dfrac{\del}{\sqrtn} \right) \dfrac{\alp \rhod \hd(m)}{1 - e^{-\rhod(1 - \alp)m}} \left(e^{-\rhod(x - \alp m)} - e^{-\rhod(1 - \alp)m}  \right),
\end{align*}
which equals $0$ when $x = m$.  Therefore, condition (iii) is satisfied with equality.

\medskip

\noindent {\it Condition} (iv):  Let $\Rn$ be any $n$-admissible retention function. We wish to show that $\mLn^{\Rn} \elln(x, m) \ge 0$ for all $(x, m) \in \mD$.
\begin{align}
\mLn^{\Rn} \elln(x, m) &= \left[- \kapn + \lan \left( (1 + \tetn) \En \Rn + \eta \En(\Yn \Rn) - \frac{\eta}{2} \, \En(\Rn^2) \right) \right]  (\elln)_x(x, m) \nonumber \\
&\quad + \lan\big(  \En \elln(x - \Rn, m) - \elln(x, m) \big). \label{eq:operLn}
\end{align}
We compute
\begin{align}\label{eq:lnx}
(\elln)_x(x, m) = - \left( 1 - \dfrac{\del}{\sqrtn} \right) \rhod \hd(m) e^{-\rhod(x - \alp m)},
\end{align}
and
\begin{align}\label{eq:dfln}
\En \elln(x - \Rn, m) - \elln(x, m) &= \En \bigg( \bigg( 1 - \dfrac{\del}{\sqrtn} \bigg) \Big( 1 - h_D(m)\Big(1 - e^{- \rho_D (x - \Rn - \alp m)}\Big) \Big) \mathds{1}_{\{x-\Rn \ge \alp m\}} \nonumber \\
& \qquad \quad + \mathds{1}_{\{ x-\Rn < \alp m\}} - \bigg( 1 - \dfrac{\del}{\sqrtn} \bigg)\Big( 1 - h_D(m)\Big(1 - e^{- \rho_D (x - \alp m)}\Big) \Big)\bigg).
\end{align}
Now, by substituting \eqref{eq:lnx} and \eqref{eq:dfln} into \eqref{eq:operLn} and by rearranging terms, we obtain
\begin{align*}
\mLn^{\Rn} \elln(x, m) & \propto \kap \rho_D - \la \left( (\sqrtn + \tet) \E R + \eta \E(Y R) - \frac{\eta}{2} \, \E(R^2) \right) \rho_D \\
&\quad + \frac{n \la e^{\rhod (x - \alp m)}}{h_D(m)} \, \E\Big(\Big(1 - h_D(m)\Big(1 - e^{- \rho_D (x - R/\sqrtn - \alp m)}\Big) \Big) \mathds{1}_{\{x - R/\sqrtn \ge \alp m \}} \Big) \\
&\quad + \frac{n \la e^{\rhod (x - \alp m)}}{h_D(m)} \, \dfrac{\E \big(\mathds{1}_{\{ x - R/\sqrtn < \alp m\}}\big)}{1 - {\del}/{\sqrtn} }
 - \frac{n \la e^{\rhod (x - \alp m)}}{h_D(m)}  \E\left(1 - h_D(m)\left(1 - e^{- \rho_D (x - \alp m)}\right) \right),
\end{align*}
in which $\propto$ denotes ``{\it positively} proportional to,'' and in which $R$ and $\Rn$ are related via \eqref{eq:R_Rn}.
Recall that $\rhod$ solves
\[
 \kap \rhod = \la \sup \limits_R \left\{ \left( \tet \E R + \eta \E \big( YR \big) - \dfrac{\eta}{2} \, \E \big( R^2 \big) \right) \rhod - \dfrac{1}{2} \, \E \big(R^2 \big) \rhod^2 \right\},
\]
that is,
\[
 \kap \rhod - \la  \left\{ \left( \tet \E R + \eta \E \big( YR \big) - \dfrac{\eta}{2} \, \E \big( R^2 \big) \right) \rhod - \dfrac{1}{2} \, \E \big(R^2 \big) \rhod^2 \right\} \ge 0,
\]
for any admissible retention function $R$.  Thus, to prove $\mLn^{\Rn} \elln(x, m) \ge 0$ for all $(x, m) \in \mD$ and for all $n$-admissible retention functions $\Rn$, it is enough to prove
\begin{align}\label{eq:Fn_elln_pos2}
& \la \left\{ \left( \tet \E R + \eta \E \big( Y R \big) - \dfrac{\eta}{2} \, \E \big( R^2 \big) \right) \rhod - \dfrac{1}{2} \, \E \big( R^2 \big) \rhod^2 \right\} \\
&\ge \la \left [\left( (\sqrtn + \tet) \E R + \eta \E(Y R) - \frac{\eta}{2} \, \E(R^2) \right) \right] \rho_D \notag \\
& \quad - \frac{n \la e^{\rhod (x - \alp m)}}{h_D(m)}  \E\Big( \Big(1 - h_D(m)\Big(1 - e^{- \rho_D (x - R/\sqrtn - \alp m)}\Big) \Big) \mathds{1}_{\{x - R/\sqrtn \ge \alp m\}}\Big) \notag\\
& \quad - \frac{n \la e^{\rhod (x - \alp m)}}{h_D(m)}  \dfrac{\E \big(\mathds{1}_{\{ x - R/\sqrtn < \alp m\}}\big)}{1 - {\del}/{\sqrtn} }
 + \frac{n \la e^{\rhod (x - \alp m)}}{h_D(m)}  \E\left(1 - h_D(m)\left(1 - e^{- \rho_D (x - \alp m)}\right) \right), \notag
\end{align}
for all admissible retention functions $R$, or equivalently,
\begin{align}\label{eq:Fn_elln_pos3}
&\dfrac{1}{2n} \, \E \big(R^2 \big) \rhod^2  \notag \\
&\le - 1 - \dfrac{\rhod \E R}{\sqrtn} + e^{\rhod (x - \alp m)} \left(\frac{\del /\sqrtn}{h_D(m)(1 - \del/\sqrtn)} + 1 \right)\E \big(\mathds{1}_{\{ x - R/\sqrtn < \alp m\}} \big) + \E \Big(e^{ \frac{\rhod R}{\sqrtn}}\mathds{1}_{\{ x - R/\sqrtn \ge \alp m\}}\Big) \notag\\
&=  \int_0^\infty  \left( e^{\frac{\rhod R(y)}{\sqrtn}} - 1 - \dfrac{\rhod R(y)}{\sqrtn} \right) d\Fy(y)  \notag \\
&\quad +  \int_{0}^\infty \left( e^{\rhod (x-\alp m)}\left(\frac{\del /\sqrtn}{h_D(m)(1 - \del/\sqrtn)} + 1 \right) - e^{\frac{\rhod R(y)}{\sqrtn}} \right)\mathds{1}_{\{ x - R/\sqrtn < \alp m\}} d\Fy(y).
\end{align}
From $e^x > 1 - x - x^2/2$ for all $x > 0$, we deduce
\[
\dfrac{1}{2n} \, \E \big(R^2 \big) \rhod^2 <  \int_0^\infty  \left( e^{\frac{\rhod R(y)}{\sqrtn}} - 1 - \dfrac{\rhod R(y)}{\sqrtn} \right) d\Fy(y).
\]
Thus, to prove \eqref{eq:Fn_elln_pos3}, it is enough to prove the stronger inequality
\begin{align}\label{ieq:whR}
\int_{0}^\infty \left(e^{\frac{\rhod}{\sqrtn}\left(R(y) - \sqrtn (x - \alp m)\right)} - \frac{\del /\sqrtn}{h_D(m)(1 - \del/\sqrtn)} - 1 \right) \mathds{1}_{\{ x - R(y)/\sqrtn < \alp m\}} d\Fy(y) \le 0,
\end{align}
and we wish to find values of $\del$ and $N > \del^2$ for which inequality \eqref{ieq:whR} holds for all $n > N$ and $x > 0$.  Note that the maximal retention function of the integrand in \eqref{ieq:whR} is $R(y) = y$; thus, to prove \eqref{ieq:whR}, it is sufficient to prove
\begin{align*}
\int_{0}^\infty \left(e^{\frac{\rhod}{\sqrtn}\left(y - \sqrtn (x - \alp m)\right)} - \frac{\del /\sqrtn}{h_D(m)(1 - \del/\sqrtn)} - 1 \right) \mathds{1}_{\{ x - y/\sqrtn < \alp m\}} d\Fy(y) \le 0,
\end{align*}
that is,
\begin{align*}
\int_{\sqrtn (x - \alp m)}^\infty \left(e^{\frac{\rhod}{\sqrtn}\left(y - \sqrtn (x - \alp m)\right)} - \frac{\del /\sqrtn}{h_D(m)(1 - \del/\sqrtn)} - 1 \right) d\Fy(y) \le 0.
\end{align*}
Let $d$ denote $\sqrtn (x - \alp m)$.  If $\Sy(d) = 0$, then the left side is identically $0$, so suppose $\Sy(d) > 0$.  After replacing $\sqrtn (x - \alp m)$ by $d$ and dividing by $\Sy(d)$, the above inequality becomes
\[
\int_{d}^\infty \left(e^{\frac{\rhod}{\sqrtn}\left(y - d\right)} - \frac{\del /\sqrtn}{h_D(m)(1 - \del/\sqrtn)} - 1 \right) \dfrac{d\Fy(y)}{\Sy(d)} \le 0,
\]
for $d \ge 0$, or equivalently,
\begin{align}\label{ineq:ySF}
\int_d^\infty \left( e^{ \frac{\rhod}{\sqrtn}(y - d)} - 1 \right) \dfrac{d\Fy(y)}{\Sy(d)} \le \frac{\del /\sqrtn}{h_D(m)(1 - \del/\sqrtn)} \, .
\end{align}
Formally, define $\Zd = (Y - d) \big| (Y > d)$; then, inequality \eqref{ineq:ySF} becomes
\[
\int_0^\infty \left(  e^{ \frac{\rhod z}{\sqrtn}} - 1 \right) dF_{\Zd}(z) \le \frac{\del /\sqrtn}{h_D(m)(1 - \del/\sqrtn)} \, .
\]
Note that $0<h_D(m)<1$; thus, if we find $\del$ to satisfy the following even stronger inequality, then the above sequence of inequalities holds:
\begin{equation}\label{ineq:del1}
\int_0^\infty \left(  e^{ \frac{\rhod z}{\sqrtn}} - 1 \right) dF_{\Zd}(z) \le \dfrac{\del}{\sqrtn} \, .
\end{equation}
Rewrite the integrand from the left side of inequality \eqref{ineq:del1} as follows:
\[
e^{ \frac{\rhod z}{\sqrtn}} - 1 = \dfrac{\rhod z}{\sqrtn} + \dfrac{\rhod^2 z^2}{n} \int_0^1 (1 - \ome) e^{\frac{\rhod z}{\sqrtn} \, \ome} d\ome.
\]
Thus, inequality \eqref{ineq:del1} is equivalent to
\[
\int_0^\infty \left(  \dfrac{\rhod z}{\sqrtn} + \dfrac{\rhod^2 z^2}{n} \int_0^1 (1 - \ome) e^{\frac{\rhod z}{\sqrtn} \, \ome} d \ome \right) dF_{\Zd}(z) \le \dfrac{\del}{\sqrtn} \, ,
\]
or, after multiplying both side by $\sqrtn$ and switching the order of integration,
\begin{equation*}
\rhod \E \Zd + \dfrac{\rhod^2}{\sqrtn} \int_0^1 (1 - \ome) \, \E \Big( \Zd^2 \, e^{\frac{\rhod \ome}{\sqrt{n}} \Zd} \Big) d \ome \le \del,
\end{equation*}
or more strongly,
\begin{equation}\label{ineq:del2}
\rhod \E \Zd + \dfrac{\rhod^2}{\sqrtn} \, \E \Big( \Zd^2 \, e^{\frac{\rhod}{\sqrt{n}} \Zd} \Big) \le \del.
\end{equation}
Note that the left side of \eqref{ineq:del2} decreases with increasing $n$.  Define $\del$ and $N$ as in \eqref{eq:del} and \eqref{eq:N}, respectively; then, inequality \eqref{ineq:del2} holds for all $d \ge 0$ and all $n > N$, which implies that $\mLn^{\Rn} \elln(x, m) \ge 0$ for all $(x, m) \in \mD$ and all $n > N$.  The conclusion in \eqref{eq:psio_scale}, then, follows from Theorem \ref{thm:sub}.
\end{proof}

In the following two propositions, we show that the condition in \eqref{eq:Zd} includes most of the usual claim distributions with light tails.

\begin{proposition}\label{prop:bounded}
Suppose $Y$ has bounded support in $\R_+$, then \eqref{eq:Zd} holds.
\end{proposition}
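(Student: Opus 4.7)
The plan is to observe that if $Y$ has bounded support, then $Z_d$ is essentially bounded uniformly in $d$, which makes the moment generating function in \eqref{eq:Zd} trivially bounded for every choice of $\varsigma > 0$.

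More concretely, I would first let $K = \inf\{y \ge 0 : \Sy(y) = 0\}$ denote the essential supremum of $Y$; by the bounded-support assumption, $K < \infty$. Pick any $\varsigma > 0$ (so that in particular $\My(\rhod/\sqrt{\varsigma}) \le e^{\rhod K/\sqrt{\varsigma}} < \infty$, confirming the side condition of Proposition \ref{prop:psin_lower} is met). For any $d \ge 0$ with $\Sy(d) > 0$, the conditional law of $Z_d = (Y - d) \mid (Y > d)$ is supported on $(0, K - d]$; hence $Z_d \le K - d \le K$ almost surely, and therefore
\[
\E\Big( e^{\frac{\rhod}{\sqrt{\varsigma}} Z_d} \Big) \le e^{\frac{\rhod K}{\sqrt{\varsigma}}}.
\]

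For the remaining case $d \ge K$, the conditioning event $\{Y > d\}$ is null, so $Z_d$ is not genuinely random; the natural convention (consistent with the formal definition the paper makes) is to set the expectation equal to $0$, or alternatively restrict the supremum in \eqref{eq:Zd} to those $d$ for which $\Sy(d) > 0$. Either way the bound $e^{\rhod K/\sqrt{\varsigma}}$ controls the supremum. Taking the supremum over $d \ge 0$ then gives
\[
\sup_{d \ge 0} \E\Big( e^{\frac{\rhod}{\sqrt{\varsigma}} Z_d} \Big) \le e^{\frac{\rhod K}{\sqrt{\varsigma}}} < \infty,
\]
which is exactly \eqref{eq:Zd}. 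There is no genuine obstacle here; the only mild subtlety is stating a convention for $d$ beyond the support of $Y$, which amounts to a one-line remark rather than any real work.
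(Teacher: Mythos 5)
Your argument is correct and is essentially the same as the paper's: the paper takes $b$ with $\Fy(b) = 1$, writes $G(d) = \E\big(e^{a Z_d}\big) = \mathds{1}_{\{d < b\}}\int_d^b e^{a(y-d)}\,d\Fy(y)/\Sy(d)$ with $a = \rhod/\sqrt{\varsigma}$, and bounds the integrand by $e^{a(b-d)}$, which is the same uniform bound $Z_d \le b - d$ you exploit directly. The only cosmetic difference is that you phrase the bound via the essential supremum $K$ and a pointwise bound on $Z_d$, while the paper writes out the conditional integral and carries the indicator $\mathds{1}_{\{d < b\}}$ to handle $d$ beyond the support; both yield the same conclusion.
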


\begin{proof}
Let $b > 0$ be such that $\Fy(b) = 1$.  Let $a = \rhod/\sqrt{\varsigma}$, and define the function $G$ by
\begin{equation}\label{eq:G}
G(d) = \E\big(e^{a \Zd}\big),
\end{equation}
which implies
\[
G(d) = \dfrac{\int_d^b e^{a(y - d)} d\Fy(y)}{\Sy(d)} \, \mathds{1}_{\{ d < b\}}.
\]
Because $e^{a(y - d)} \le e^{a(b - d)}$ for all $0 \le y \le b$, we have
\[
G(d) \le e^{a(b - d)},
\]
so its supremum over $\R_+$ is finite.
\end{proof}

\begin{proposition}\label{prop:unbounded}
Suppose $Y$ has a probability density function $\fy$ with unbounded support on $\R_+$, and let $\hy$ denote the hazard rate function of $Y$, that is,
\begin{equation}
\hy(y) = \dfrac{\fy(y)}{\Sy(y)}, 
\end{equation}
for $y \ge 0$.  Furthermore, suppose 
\begin{equation}
\lim_{y \to \infty} \hy(y) = \ell.
\end{equation}
If $\ell = 0$, then $\My(y) = \infty$ for all $y > 0$, which contradicts our assumption concerning $Y$'s moment generating function. On the other hand, if $\ell > 0$, then \eqref{eq:Zd} holds.
\end{proposition}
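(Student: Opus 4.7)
The plan is to translate the hazard-rate hypothesis into tail bounds on $\Sy$ via the identity $\Sy(y) = \exp\!\bigl(-\int_0^y \hy(s)\,ds\bigr)$, which also yields
\[
\frac{\Sy(d+z)}{\Sy(d)} = \exp\!\left(-\int_d^{d+z} \hy(s)\,ds\right).
\]
These two expressions control $\My$ and the conditional moment generating function $\E(e^{a \Zd})$, respectively, in a uniform way, reducing both claims to elementary estimates.

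For the first claim (the case $\ell = 0$), I would fix an arbitrary $t > 0$ and aim to show $\My(t) = \infty$.  Since $\hy(y) \to 0$, pick $\eps \in (0, t)$ and $y_0$ such that $\hy(y) < \eps$ for all $y > y_0$; the hazard representation then gives $\Sy(y) \ge \Sy(y_0)\, e^{-\eps(y - y_0)}$ for $y \ge y_0$.  Using the integration-by-parts identity $\My(t) = 1 + t \int_0^\infty e^{ty} \Sy(y)\,dy$, the integrand on $[y_0, \infty)$ is bounded below by a positive multiple of $e^{(t - \eps) y}$, whose integral diverges because $t - \eps > 0$.  Hence $\My(t) = \infty$ for every $t > 0$, contradicting the standing hypothesis that $\My$ is finite in a neighborhood of $0$.

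For the second claim (the case $\ell > 0$), I would set $a = \rhod/\sqrt{\varsigma}$ and start from the tail representation
\[
\E\!\bigl(e^{a \Zd}\bigr) = 1 + a \int_0^\infty e^{az}\, \frac{\Sy(d+z)}{\Sy(d)}\,dz.
\]
By Remark \ref{rem:mgf}, there exists $t_0 > 0$ for which $\My$ is finite on $[0, t_0)$; choose $\varsigma$ large enough that $a < \min(\ell, t_0)$.  Since $\hy(y) \to \ell > a$, pick $y_0$ so that $\hy(y) > (a + \ell)/2$ for $y \ge y_0$.  For $d \ge y_0$ the conditional ratio is bounded by $e^{-(a + \ell)z/2}$, so the integral is at most $2/(\ell - a)$, uniformly in $d$.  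For $d \in [0, y_0]$, monotonicity of $\Sy$ gives $\Sy(d+z) \le \Sy(z)$ and $\Sy(d) \ge \Sy(y_0) > 0$, so the integral is at most $\Sy(y_0)^{-1} \int_0^\infty e^{az} \Sy(z)\,dz$, which is finite because $a < t_0$ forces $\My(a) < \infty$.  Combining the two bounds yields \eqref{eq:Zd}.

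The main obstacle is balancing the two constraints on $a$: we need $a$ strictly below $\ell$ to make the tail integrable for $d \ge y_0$, and simultaneously below $t_0$ to invoke the standing moment-generating-function hypothesis for the bounded regime $d \in [0, y_0]$.  Once $\varsigma$ is chosen so that $\rhod/\sqrt{\varsigma}$ clears both thresholds, the split of $[0, \infty)$ into these two regimes is natural and the remaining estimates are routine applications of the hazard-rate representation of $\Sy$.
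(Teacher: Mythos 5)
Your proof is correct, and for both halves it departs genuinely from the paper's argument. For the $\ell = 0$ case, the paper simply cites Theorem 2.5.1 of Rolski et al.\ \cite{RSST1999}; you instead give a self-contained argument from the hazard-rate representation $\Sy(y) = \exp\bigl(-\int_0^y \hy(s)\,ds\bigr)$ together with the identity $\My(t) = 1 + t\int_0^\infty e^{ty}\Sy(y)\,dy$, which makes the divergence of $\My(t)$ transparent. For the $\ell > 0$ case, the paper defines $G(d) = \E(e^{a\Zd})$, applies L'H\^opital's rule to compute $\lim_{d\to\infty} G(d) = \ell/(\ell-a)$, and then invokes continuity of $G$ on $\R_+$ to conclude that $\sup_d G(d) < \infty$; your argument instead splits the range of $d$ at a threshold $y_0$, bounds the conditional tail $\Sy(d+z)/\Sy(d)$ by $e^{-(a+\ell)z/2}$ for $d \ge y_0$ via the hazard-rate representation, and handles $d \in [0,y_0]$ by monotonicity of $\Sy$ and the standing assumption $\My(a) < \infty$. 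The paper's route is shorter once you grant that $G$ is finite and continuous on all of $[0,\infty)$, whereas yours yields an explicit uniform constant, namely $\max\bigl(1 + 2a/(\ell - a),\, 1 + a\Sy(y_0)^{-1}\int_0^\infty e^{az}\Sy(z)\,dz\bigr)$, and does not rely on differentiability of the numerator and denominator needed for L'H\^opital. One small cosmetic point: your choice $a < \ell$ is slightly weaker than the paper's $a < \ell/2$; both suffice since either keeps $\ell - a > 0$.
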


\begin{proof}
The first conclusion in the statement of this proposition follows from Theorem 2.5.1 in Rolski et al.\ \cite{RSST1999}, so suppose $\ell > 0$.  Let $\varsigma$ be large enough so that $a = \rhod/\sqrt{\varsigma} < \ell/2$ with $\My(a) < \infty$.  As in the proof of Proposition \ref{prop:bounded}, define the function $G$ by \eqref{eq:G}.  By applying L'H\^opital's rule, we obtain
\begin{align*}
\lim_{d \to \infty} G(d) &\; = \lim_{d \to \infty} \dfrac{\int_d^\infty e^{ay} \fy(y) dy}{e^{ad} \Sy(d)} \\
&\overset {L\hbox{'}H} {=} \lim_{d \to \infty} \dfrac{- e^{ad} \fy(d)}{a e^{ad} \Sy(d) - e^{ad} \fy(d)} \\
&\; = \lim_{d \to \infty} \dfrac{\hy(d)}{-a + \hy(d)} = \dfrac{\ell}{\ell - a} > 0.
\end{align*}
Because $G$ is continuous on $\R_+$, it follows that $\sup_{d \ge 0} G(d) < \infty$.
\end{proof}

\subsection{Convergence of $\psin$ to $\psid$}\label{sec:psin_lim}

In this section we prove an important result, namely, that as $n \to \infty$, $\psin$ converges to $\psid$ uniformly on $\mD$, with rate of convergence of order $\mO(n^{-1/2})$.  In the following theorem, we combine the results of Propositions \ref{prop:psin_upper} and \ref{prop:psin_lower}.

\begin{theorem}\label{thm:psin_lim}
If \eqref{eq:Zd} holds,
then there exist $C' > 0$ and $N' > 0$ such that, for all $n \ge N'$ and $(x, m) \in \mD$,
\begin{align}\label{eq:psin_lim}
\big| \psin(x, m) - \psid(x, m) \big| \le \dfrac{C'}{\sqrtn} \, .
\end{align}
\end{theorem}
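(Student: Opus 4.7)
The plan is to leverage the two-sided bound $\elln \le \psin \le \un$ from Propositions \ref{prop:psin_lower} and \ref{prop:psin_upper} and to show that both $\un$ and $\elln$ lie within $\mO(n^{-1/2})$ of $\psid$ uniformly on $\mD$; the triangle inequality will then yield \eqref{eq:psin_lim}, with $C'$ the larger of the two constants produced and $N'$ the larger of the two thresholds supplied by those propositions.

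The lower-side estimate is immediate from the definition \eqref{eq:elln}: on $\mD$ we have $\psid - \elln = (\del/\sqrtn)\psid$, and since $0 \le \psid \le 1$,
\[
|\psid(x, m) - \elln(x, m)| \le \dfrac{\del}{\sqrtn}, \qquad (x, m) \in \mD.
\]
The real work goes into the upper-side estimate. The key observation is that $\un(x, m)$ and $\psid(x, m)$ share the same parametric form
\[
F(\rho; x, m) := 1 - \big( 1 - e^{-\rho(1 - \alp)m} \big)^{\alp/(1-\alp)} \big( 1 - e^{-\rho(x - \alp m)} \big),
\]
evaluated at $\rho = \rhod - C/\sqrtn$ for $\un$ and at $\rho = \rhod$ for $\psid$. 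Applying the mean value theorem in $\rho$ produces, for some $\xi_n = \xi_n(x, m) \in [\rhod - C/\sqrtn,\, \rhod]$,
\[
\un(x, m) - \psid(x, m) = -\partial_\rho F(\xi_n; x, m) \cdot \dfrac{C}{\sqrtn}.
\]

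The main obstacle is to show that $|\partial_\rho F|$ is uniformly bounded on $\mD \times [\rhod/2,\, \rhod]$, an interval containing every $\xi_n$ for $n$ large. A direct differentiation gives
\begin{align*}
-\partial_\rho F(\rho; x, m) &= \alp m\, e^{-\rho(1-\alp)m} \big(1 - e^{-\rho(1-\alp)m}\big)^{(2\alp-1)/(1-\alp)} \big(1 - e^{-\rho(x - \alp m)}\big) \\
&\quad + (x - \alp m)\, e^{-\rho(x - \alp m)} \big(1 - e^{-\rho(1 - \alp)m}\big)^{\alp/(1 - \alp)}.
\end{align*}
The second summand is dominated by $\sup_{z \ge 0} z\, e^{-\rho z} = 1/(e\rho)$, which is uniformly bounded for $\rho \ge \rhod/2$. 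For the first summand, the substitution $t = \rho(1-\alp)m$ reduces it to a constant multiple of $t\, e^{-t}(1 - e^{-t})^{(2\alp-1)/(1-\alp)}$, a continuous function on $(0, \infty)$ that tends to $0$ at both endpoints — near $t = 0^+$ it is asymptotic to $t^{\alp/(1 - \alp)}$, and as $t \to \infty$ the factor $t\, e^{-t}$ forces it to zero — so it is uniformly bounded. Combining the two bounds yields $|\un - \psid| \le KC/\sqrtn$ on $\mD$ for some finite $K$ independent of $n$, once $n$ is sufficiently large.

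Finally, choosing $N'$ to exceed both thresholds from Propositions \ref{prop:psin_upper} and \ref{prop:psin_lower} and setting $C' := \max(KC,\, \del)$, the sandwich $\elln \le \psin \le \un$ gives $\psin(x, m) - \psid(x, m) \in \big[\elln(x, m) - \psid(x, m),\, \un(x, m) - \psid(x, m)\big]$, hence
\[
|\psin(x, m) - \psid(x, m)| \le \max\big(|\un(x, m) - \psid(x, m)|,\, |\elln(x, m) - \psid(x, m)|\big) \le \dfrac{C'}{\sqrtn},
\]
which is exactly \eqref{eq:psin_lim}.
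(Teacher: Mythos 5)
Your proposal is correct, and it takes a genuinely different route to bound $\un - \psid$ than the paper does. The paper subtracts and adds the cross term
\[
\left( 1 - e^{-\rhod(x - \alp m)} \right)\left( 1 - e^{-(\rhod - C/\sqrtn)(1 - \alp)m} \right)^{\alp/(1-\alp)},
\]
splitting $\un - \psid$ into two summands, then bounds one piece via an L'H\^{o}pital argument combined with the inequality $1 - (1-x)^b \le \max(b,1)\,x$, and the other via an explicit exponential calculus bound of the form $(1 - C/(\rhod\sqrtn))^{\rhod\sqrtn/C}\cdot(C/\sqrtn)/(\rhod - C/\sqrtn)$. You instead observe that $\un$ and $\psid$ are a single function $F(\rho; x, m)$ evaluated at two nearby values of $\rho$, invoke the mean value theorem in $\rho$, and reduce the whole upper-side estimate to showing that $\partial_\rho F$ is uniformly bounded over $\mD \times [\rhod/2, \rhod]$ — which you do by dominating the second summand by $1/(e\rho)$ and the first by the one-variable bound $\sup_{t>0} t e^{-t}(1-e^{-t})^{(2\alp-1)/(1-\alp)} < \infty$ after the substitution $t = \rho(1-\alp)m$. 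This is cleaner conceptually: it cleanly factors the estimate into a mesh-size term $C/\sqrtn$ and a uniform sensitivity bound, and it requires only one derivative computation instead of two tailored inequalities. The paper's approach is more explicit about the constants. One small book-keeping point to fold in: your $N'$ must also exceed $(2C/\rhod)^2$ so that $\xi_n \in [\rhod - C/\sqrtn, \rhod] \subseteq [\rhod/2, \rhod]$, a condition you acknowledge (``for $n$ large'') but should make part of the definition of $N'$ alongside the thresholds from Propositions~\ref{prop:psin_upper} and~\ref{prop:psin_lower}. Your asymptotic check on the first summand at $t \to 0^+$ (it behaves like $t^{\alp/(1-\alp)}$) and at $t \to \infty$ is exactly right and covers the delicate case $\alp < 1/2$, where the exponent $(2\alp - 1)/(1-\alp)$ is negative.
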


\begin{proof}
From Propositions \ref{prop:psin_upper} and \ref{prop:psin_lower} it follows that
\[
\left(1 - \dfrac{\del}{\sqrtn}\right) \psid(x, m) < \psin(x, m) < \un(x, m).
\]
Subtracting $\psid(x, m)$ from each side yields
\begin{align}\label{ineq:lim1}
- \, \dfrac{\del}{\sqrtn} \,  \psid(x, m) < \psin(x, m) -  \psid(x, m) < \un(x, m) -  \psid(x, m).
\end{align}
Clearly, the left side is bounded below by $-\del/\sqrtn$.  From \eqref{eq:psid}, \eqref{eq:hd}, \eqref{eq:un}, and \eqref{eq:kn}, we deduce that the right side is positive and equals
\begin{align}\label{ineq:lim2}
&\left( 1 - e^{-\rhod(1 - \alp)m} \right)^{\frac{\alp}{1 -\alp}} \left( 1 - e^{-\rhod(x - \alp m)} \right) - \left( 1 - e^{-(\rhod - C/\sqrtn)(1 - \alp)m} \right)^{\frac{\alp}{1 -\alp}} \left( 1 - e^{-(\rhod - C/\sqrtn)(x - \alp m)} \right) \notag \\
&= \left( 1 - e^{-\rhod(x - \alp m)} \right) \left[ \left( 1 - e^{-\rhod(1 - \alp)m} \right)^{\frac{\alp}{1 -\alp}} - \left( 1 - e^{-(\rhod - C/\sqrtn)(1 - \alp)m} \right)^{\frac{\alp}{1 -\alp}} \right] \notag \\
&\quad + \left( 1 - e^{-(\rhod - C/\sqrtn)(1 - \alp)m} \right)^{\frac{\alp}{1 -\alp}} \left[ \left( 1 - e^{-\rhod(x - \alp m)} \right) - \left( 1 - e^{-(\rhod - C/\sqrtn)(x - \alp m)} \right) \right].
\end{align}

First, analyze the next-to-the-last line in \eqref{ineq:lim2}:  for $n > (C/\rhod)^2$, we have
\begin{align*}
0 &\le \left( 1 - e^{-\rhod(x - \alp m)} \right) \left[ \left( 1 - e^{-\rhod(1 - \alp)m} \right)^{\frac{\alp}{1 -\alp}} - \left( 1 - e^{-(\rhod - C/\sqrtn)(1 - \alp)m} \right)^{\frac{\alp}{1 -\alp}} \right] \\
&\le \left[ \left( 1 - e^{-\rhod(1 - \alp)m} \right)^{\frac{\alp}{1 -\alp}} - \left( 1 - e^{-(\rhod - C/\sqrtn)(1 - \alp)m} \right)^{\frac{\alp}{1 -\alp}} \right] \\
&= \left( 1 - e^{-\rhod(1 - \alp)m} \right)^{\frac{\alp}{1 -\alp}} \left[ 1 - \left( \dfrac{1 - e^{-(\rhod - C/\sqrtn)(1 - \alp)m}}{1 - e^{-\rhod(1 - \alp)m}} \right)^{\frac{\alp}{1 -\alp}} \right] \\
&\le 1 - \left( \dfrac{1 - e^{-(\rhod - C/\sqrtn)(1 - \alp)m}}{1 - e^{-\rhod(1 - \alp)m}} \right)^{\frac{\alp}{1 -\alp}}.
\end{align*}
It is straightforward to show that the last line of the above expression decreases with respect to $m$; therefore, by applying L'H\^opital's rule to take the limit as $m$ goes to $0$ of the expression in parentheses, we obtain
\begin{align*}
0 &\le \left( 1 - e^{-\rhod(x - \alp m)} \right) \left[ \left( 1 - e^{-\rhod(1 - \alp)m} \right)^{\frac{\alp}{1 -\alp}} - \left( 1 - e^{-(\rhod - C/\sqrtn)(1 - \alp)m} \right)^{\frac{\alp}{1 -\alp}} \right] \\
&\le 1 - \left( 1 - \dfrac{C}{\rhod \sqrtn} \right)^{\frac{\alp}{1 - \alp}}.
\end{align*}
If $b \ge 1$, then $1 - (1 - x)^b$ is concave on $[0, 1]$, so lies below its tangent line at $x = 0$, which implies $1 - (1 - x)^b \le bx$ for all $0 \le x \le 1$.  If $0 < b < 1$, then $1 - (1 - x)^b$ is convex on $[0, 1]$, so lies below its secant line between $(0, 0)$ and $(1, 1)$, which implies $1 - (1 - x)^b \le x$ for all $0 \le x \le 1$.  Thus, by choosing any $M > (C/\rhod)^2$, we conclude that $n \ge M$ implies
\begin{align}\label{ineq:lim2_1}
0 &\le \left( 1 - e^{-\rhod(x - \alp m)} \right) \left[ \left( 1 - e^{-\rhod(1 - \alp)m} \right)^{\frac{\alp}{1 -\alp}} - \left( 1 - e^{-(\rhod - C/\sqrtn)(1 - \alp)m} \right)^{\frac{\alp}{1 -\alp}} \right] \notag \\
&\le \max\left( \frac{\alp}{1 - \alp}, \, 1 \right) \cdot \dfrac{C}{\rhod \sqrtn}.
\end{align}

Next, analyze the last line of \eqref{ineq:lim2}:  for $n > (C/\rhod)^2$, calculus shows that
\begin{equation}\label{ineq:lim3}
\left( 1 - e^{-\rhod(x - \alp m)} \right) - \left( 1 - e^{-(\rhod - C/\sqrtn)(x - \alp m)} \right) \le \left(1 - \frac{C}{\rhod \sqrtn} \right)^{\frac{\rhod \sqrtn}{C}} \left(\frac{\frac{C}{\sqrtn}}{{\rhod - \frac{C}{\sqrtn}}} \right).
\end{equation}
The first factor on the right side of \eqref{ineq:lim3} converges to $e^{-1}$; thus, there exists $M' > (C/\rhod)^2$ such that
\[
\left(1 - \frac{C}{\rhod \sqrtn} \right)^{\frac{\rhod \sqrtn}{C}} \le 2 e^{-1},
\]
for all $n \ge M'$, which implies that the last line in \eqref{ineq:lim2} satisfies
\begin{align}\label{ineq:lim4}
0 &\le \left( 1 - e^{-(\rhod - C/\sqrtn)(1 - \alp)m} \right)^{\frac{\alp}{1 -\alp}} \left[ \left( 1 - e^{-\rhod(x - \alp m)} \right) - \left( 1 - e^{-(\rhod - C/\sqrtn)(x - \alp m)} \right) \right] \notag \\
&\le 2 e^{-1} \, \dfrac{C}{\rhod - \frac{C}{\sqrt{M'}}} \cdot \dfrac{1}{\sqrtn},
\end{align}
for all $n \ge M'$.

By combining \eqref{ineq:lim2_1} and \eqref{ineq:lim4}, by setting $N' = \max(N, M, M')$, and by setting
\[
C' = \max\left\{ \del, \; \max\left( \frac{\alp}{1 - \alp}, \, 1 \right) \cdot \dfrac{C}{\rhod} + 2 e^{-1} \, \dfrac{C}{\rhod - \frac{C}{\sqrt{M'}}} \right\},
\]
we obtain \eqref{eq:psin_lim}.
\end{proof}


\subsection{$\mO(n^{-1/2})$-optimality of retaining $\Rdn$ in the $n$-scaled model}\label{sec:Rd_opt}

We end this paper by showing that, if an insurer follows the optimal retention strategy for the diffusion model but surplus follows the $n$-scaled model, then the resulting probability of drawdown is $\mO(n^{-1/2})$-optimal.  To that end, let $\psidn$ denote the probability of drawdown when the insurer retains $\Rdn$ in \eqref{eq:Rdn} when surplus follows the $n$-scaled model.  Then, we have the following theorem whose proof is similar to the proof of Theorem \ref{thm:super}, so we omit it.

\begin{theorem}\label{thm:super_Rdn}
Suppose $v \in \mC^{1,1}(\mD)$ is a bounded function that satisfies the following conditions:
\begin{enumerate}
\item[$(i)$] $\lim \limits_{m \to \infty} v(m, m) \ge 0$. 
\item[$(ii)$] $v(x, m)$ is defined for all $x \le m$ and $m \ge 0$, with $v(x, m) = 1$ for all $x < \alp m$.
\item[$(iii)$] $v_m(m, m) \le 0$ for all $m \ge 0$.
\item[$(iv)$] $\mLn^{\Rdn} v(x, m) \le 0$ for all $(x, m) \in \mD$.
\end{enumerate}

\noindent
Then, $\psidn \le v$ on $\mD$.  \qed
\end{theorem}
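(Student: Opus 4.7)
The proof plan is to adapt the argument of Theorem \ref{thm:super} essentially verbatim to the scaled setting, with the simplification that we are no longer taking an infimum over strategies: we are analyzing the probability of drawdown under the single, constant retention strategy $\mRn = \{\Rdn\}_{t \ge 0}$. So the only ``candidate'' supersolution strategy in condition (iv) is $\Rdn$ itself, and $\psidn = \pr^{x,m}(\tau_\alp^{\mRn} < \infty)$ is computed directly rather than as an infimum.

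First, I would fix $(x, m) \in \mD$ and $b > m$, define $s_b = \inf\{t \ge 0 : \Xn^{\mRn}_t \ge b\}$ and $\tauab = \tau_\alp^{\mRn} \wedge s_b$, and invoke the scaled-model analog of Lemma \ref{lem:tauab_fin} to get $\pr^{x,m}(\tauab < \infty) = 1$ (the Azcue--Muler dichotomy applies equally to the scaled classical risk model). Then I would apply It\^o's formula to $v(\Xn^{\mRn}_t, M^{\mRn}_t)$ on $[0, \tauab]$ using the dynamics in \eqref{eq:Xn}. Exactly as in \eqref{eq:super1}, the output decomposes into four pieces: $v(x,m)$, the Lebesgue integral of $\mLn^{\Rdn} v$ along the path, a compensated-Poisson martingale integral, and an integral against $dM^{\mRn}_t$.

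Next, I would bound each piece. The Lebesgue integral is non-positive by condition (iv). The compensated-Poisson integral is a true martingale because $v$ is bounded (and $\Rdn \le y$ so the integrand $\En v(\cdot - \Rdn, \cdot) - v(\cdot, \cdot)$ is bounded), so its expectation vanishes. The $dM^{\mRn}$-integral is non-positive almost surely because $dM^{\mRn}_t$ is supported on $\{X^{\mRn}_t = M^{\mRn}_t\}$ where $v_m \le 0$ by condition (iii); as in the original proof, the cross-variation of the finite-variation process $M^{\mRn}$ and the jump process $\Xn^{\mRn}$ is zero. Taking expectations yields the analog of \eqref{eq:super2}, namely $\E^{x,m} v(\Xn^{\mRn}_{\tauab}, M^{\mRn}_{\tauab}) \le v(x, m)$.

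Finally, I would split according to whether drawdown or upcrossing of $b$ occurs first, using condition (ii) to identify $v = 1$ on $\{\tau_\alp < s_b\}$, obtaining the analog of \eqref{eq:super3}:
\begin{equation*}
v(x, m) \ge \pr^{x,m}(\tau_\alp^{\mRn} < s_b) + \pr^{x,m}(s_b < \tau_\alp^{\mRn}) \cdot v(b, b).
\end{equation*}
Repeating the monotone-set argument showing $\{\tau_\alp^{\mRn} < s_b\} \uparrow \{\tau_\alp^{\mRn} < \infty\}$ as $b \to \infty$, and invoking condition (i) to handle $v(b, b)$, dominated convergence gives $v(x, m) \ge \pr^{x,m}(\tau_\alp^{\mRn} < \infty) = \psidn(x, m)$, which is the desired conclusion. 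The only step that differs even mildly from the original proof is verifying the scaled-model version of Lemma \ref{lem:tauab_fin}, but this requires only recognizing that the $n$-scaled Cram\'er--Lundberg process still satisfies the Azcue--Muler zero-or-infinity dichotomy; everything else is a direct transcription with $\mL^{\hR}$ replaced by $\mLn^{\Rdn}$.
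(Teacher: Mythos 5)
Your proposal is correct and is essentially identical to what the paper intends: the paper omits the proof, stating only that it "is similar to the proof of Theorem \ref{thm:super}," and your verbatim adaptation with $\mL^{\hR}$ replaced by $\mLn^{\Rdn}$ and the Azcue--Muler dichotomy applied to the scaled process is exactly that argument.
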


By analogy with the expressions for $\psid$ and $\opsin$ in \eqref{eq:psid} and \eqref{eq:opsin}, respectively, define $\opsidn$ for $x \le m$ and $m \ge 0$ as follows:
\begin{equation}\label{eq:opsidn}
\opsidn(x, m) =
\begin{cases}
1 - \hdn(m) \left( 1 - e^{-\rhon(\Rd)(x - \alp m)} \right), &\quad (x, m) \in \mD,\\
1, &\quad x < \alp m,
\end{cases}
\end{equation}
in which $\hdn$ is defined by
\begin{equation}\label{eq:hdn}
\hdn(m) = \left( 1 - e^{-\rhon(\Rd)(1 - \alp)m} \right)^{\frac{\alp}{1 -\alp}},
\end{equation}
for all $m \ge 0$.  In \eqref{eq:opsidn} and \eqref{eq:hdn}, $\rhon(\Rd) > 0$ uniquely solves \eqref{eq:rhon_HJB} with the $\sup_R$ removed, as defined at the beginning of the proof of Lemma \ref{lem:rhon_le_rhod}.

In the following lemma, we use Theorem \ref{thm:super_Rdn} to prove that $\opsidn$ is an upper bound of $\psidn$.

\begin{lemma}\label{lem:opsidn_upper}
For all $(x, m) \in \mD$,
\begin{equation}\label{eq:opsidn_upper}
\psidn(x, m) \le \opsidn(x, m).
\end{equation}
\end{lemma}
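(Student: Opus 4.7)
The plan is to invoke Theorem \ref{thm:super_Rdn} with $v := \opsidn$. The argument runs exactly parallel to the proof of Lemma \ref{lem:opsin_upper}, with two changes: the maximum adjustment coefficient $\rhon$ is replaced throughout by $\rhon(\Rd)$ (equivalently $\rhon(\Rdn)$, where $\Rd$ and $\Rdn$ correspond via \eqref{eq:R_Rn}), and the optimal $n$-admissible retention function $\Rn^\rho$ is replaced by the fixed retention function $\Rdn$. Since $\opsidn \in \mC^{1,1}(\mD)$ is evident from its piecewise-exponential definition, the work consists entirely in verifying the four conditions of the theorem.

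Conditions (i)--(iii) are verbatim transcriptions of the corresponding verifications in Lemma \ref{lem:opsin_upper}, with $\rhon > 0$ replaced by $\rhon(\Rd) > 0$ and $\hn$ replaced by $\hdn$. Specifically, $\hdn(m) \to 1$ and $1 - e^{-\rhon(\Rd)(1-\alp)m} \to 1$ as $m \to \infty$, yielding $\opsidn(m,m) \to 0$; condition (ii) holds by construction; and direct differentiation in $m$ followed by evaluation at $x = m$ gives $(\opsidn)_m(m,m) = 0$.

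The substantive step is condition (iv). I would compute $\mLn^{\Rdn} \opsidn(x, m)$ from the definition \eqref{eq:mLn}, substituting
\[
(\opsidn)_x(x, m) = -\rhon(\Rd) \, \hdn(m) \, e^{-\rhon(\Rd)(x - \alp m)}
\]
and, by splitting the expectation on the events $\{x - \Rdn \ge \alp m\}$ and $\{x - \Rdn < \alp m\}$,
\[
\En \opsidn(x - \Rdn, m) - \opsidn(x, m) = \hdn(m)\, \En\bigl[\mathds{1}_{\{x - \Rdn < \alp m\}}\bigr] + \hdn(m)\, e^{-\rhon(\Rd)(x - \alp m)} \bigl( \En\bigl[e^{\rhon(\Rd) \Rdn} \mathds{1}_{\{x - \Rdn \ge \alp m\}}\bigr] - 1 \bigr).
\]
Then I would invoke the defining adjustment coefficient relation for the pair $(\Rdn, \rhon(\Rd))$, namely
\[
\lan \bigl(\En e^{\rhon(\Rd) \Rdn} - 1\bigr) = \left[-\kapn + \lan \left( (1 + \tetn) \En \Rdn + \eta \En(\Yn \Rdn) - \tfrac{\eta}{2} \En(\Rdn^2) \right)\right] \rhon(\Rd),
\]
to cancel the bracketed coefficient in front of $(\opsidn)_x$. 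After the cancellation, the surviving terms collapse to
\[
\mLn^{\Rdn} \opsidn(x, m) = - \lan \, \hdn(m) \, \En \Bigl[ \bigl(e^{\rhon(\Rd)(\alp m - (x - \Rdn))} - 1 \bigr) \mathds{1}_{\{x - \Rdn < \alp m\}} \Bigr] \le 0,
\]
since on $\{x - \Rdn < \alp m\}$ the exponent $\rhon(\Rd)(\alp m - (x - \Rdn))$ is strictly positive.

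No serious obstacle is expected: the whole point of the adjustment coefficient $\rhon(\Rd)$ is precisely to make the ``non-drawdown'' part of $\mLn^{\Rdn}$ applied to the exponential ansatz vanish, so that only the manifestly non-positive drawdown correction remains. With all four conditions of Theorem \ref{thm:super_Rdn} verified, the theorem delivers $\psidn \le \opsidn$ on $\mD$, which is the desired inequality \eqref{eq:opsidn_upper}.
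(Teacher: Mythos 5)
Your proposal is correct and follows essentially the same approach as the paper: invoke Theorem~\ref{thm:super_Rdn} with $v=\opsidn$, verify conditions (i)--(iii) by transcription from Lemma~\ref{lem:opsin_upper} with $\rhon$ replaced by $\rhon(\Rd)$, and for condition (iv) use the adjustment coefficient relation for the pair $(\Rdn,\rhon(\Rd))$ to cancel the drift term and leave only the manifestly non-positive drawdown correction. The only cosmetic difference is that you keep the computation entirely in scaled variables ($\En$, $\Rdn$, $\lan$) whereas the paper unscales to $\E$, $\Rd$, $n\la$ via \eqref{eq:R_Rn}; these are identical by the one-to-one correspondence the paper establishes.
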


\begin{proof}
We prove this lemma via Theorem \ref{thm:super_Rdn}.  First, note that $\opsidn \in \mC^{1,1}(\mD)$ by its definition.  Next, we go through each condition in Theorem \ref{thm:super_Rdn} in turn.

\medskip

\noindent {\it Condition} (i):
\begin{align*}
\lim_{m \to \infty} \opsidn(m, m) &= \lim_{m \to \infty} \left( 1 - \hdn(m) \left( 1 - e^{-\rhon(\Rd)(1 - \alp)m} \right) \right) \\
&= 1 - \lim_{m \to \infty} \left( 1 - e^{-\rhon(\Rd)(1 - \alp)m} \right)^{\frac{\alp}{1 -\alp} + 1} = 1 - 1 = 0,
\end{align*}
so condition (i) is satisfied with equality.

\medskip

\noindent {\it Condition} (ii): This condition, namely, that $\opsidn(x, m)$ is defined for all $x \le m$ and $m \ge 0$, with $\opsidn(x, m) = 1$ for all $x < \alp m$, is satisfied by the definition of $\opsidn$ in \eqref{eq:opsidn}.

\medskip

\noindent {\it Condition} (iii):  For $m > 0$, differentiate $\opsidn$ with respect to $m$ and simplify the expression to obtain
\begin{align*}
(\opsidn)_m(x, m) = \dfrac{\alp \rhon(\Rd) \hdn(m)}{1 - e^{-\rhon(\Rd)(1 - \alp)m}} \left(e^{-\rhon(\Rd)(x - \alp m)} - e^{-\rhon(\Rd)(1 - \alp)m}  \right),
\end{align*}
which equals $0$ when $x = m$.  Therefore, condition (iii) is satisfied with equality.

\medskip

\noindent {\it Condition} (iv):  From \eqref{eq:lem32_0}, we know $\rhon(\Rd)$ satisfies
\[
- \kap + \la \left[ (\sqrtn + \tet) \E \Rd + \eta \E(Y \Rd) - \dfrac{\eta}{2} \, \E(\Rd^2) \right] = \dfrac{n\la}{\rhon(\Rd)} \left( \E e^{\rhon(\Rd)\Rd/\sqrtn} - 1 \right).
\]
Then, for $(x, m) \in \mD$, the expression in \eqref{eq:mLn} gives us
\begin{align*}
\mLn^{\Rdn} \opsidn(x, m) &= \left[- \kapn + \lan \left( (1 + \tetn) \En \Rdn + \eta \En(\Yn \Rdn) - \frac{\eta}{2} \, \En((\Rdn)^2) \right) \right]  (\opsidn)_x(x, m) \\
&\quad + \lan\big(  \En \opsidn(x - \Rdn, m) - \opsidn(x, m) \big) \\
&=  \left[- \kap + \la \left( (\sqrtn + \tet) \E\Rd + \eta \E(Y\Rd) - \frac{\eta}{2} \, \E(\Rd^2) \right) \right]  (\opsidn)_x(x, m) \\
&\quad + n\la \big(  \E\opsidn(x - \Rd/\sqrtn, m) - \opsidn(x, m) \big) \\
&= \dfrac{n\la}{\rhon(\Rd)} \left( \E e^{\rhon(\Rd)\Rd/\sqrtn} - 1 \right) \big(- \rhon(\Rd) \hdn(m) e^{-\rhon(\Rd)(x - \alp m)} \big) \\
&\quad + n\la \left[ \E \Big( \left(1 - \hdn(m) \left( 1 - e^{-\rhon(\Rd)(x - \Rd/\sqrtn - \alp m)} \right)\right) \mathds{1}_{\{ x - \Rd/\sqrtn \ge \alp m\}} \Big) \right] \\
&\quad + n\la \left[ \E \big( \mathds{1}_{\{ x - \Rd/\sqrtn < \alp m\}} \big) - \left(1 - \hdn(m) \left( 1 - e^{-\rhon(\Rd)(x - \alp m)} \right) \right) \right] \\
&= - n \la \hdn(m) \E \Big( \big( e^{\rhon(\Rd)(\alp m - (x - \Rd/\sqrtn))} - 1 \big) \mathds{1}_{\{ x - \Rd/\sqrtn < \alp m\}} \Big) \\
&\le 0.
\end{align*}
Thus, we have proved condition (iv).  Theorem \ref{thm:super_Rdn}, then, implies inequality \eqref{eq:opsidn_upper} on $\mD$.
\end{proof}

We obtain the following proposition from Lemma \ref{lem:opsidn_upper}, in which we show that $\un$ in \eqref{eq:un} is an upper bound of $\psidn$.

\begin{proposition}\label{prop:psidn_upper}
Let $C$ and $N$ be as in the statement of Lemma {\rm \ref{lem:rhod_le_rhon}}; then, for $n \ge N$,
\begin{equation}\label{eq:psidn_upper}
\psidn \le \un,
\end{equation}
on $\mD$, in which $\un$ is defined in \eqref{eq:un}.
\end{proposition}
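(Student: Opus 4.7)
The plan is to chain Lemma \ref{lem:opsidn_upper} with a simple monotonicity observation and the inequality $\rhod - C/\sqrtn < \rhon(\Rd)$ that was already extracted, with full proof, inside the argument for Lemma \ref{lem:rhod_le_rhon}. From Lemma \ref{lem:opsidn_upper} we already have $\psidn \le \opsidn$ on $\mD$, so it suffices to establish the pointwise comparison $\opsidn \le \un$ on $\mD$, after which the conclusion is immediate by transitivity.

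The key observation is that both $\opsidn$ and $\un$ are instances of the same parametric family of functions
\begin{equation*}
F(x,m;\rho) = 1 - \left(1 - e^{-\rho(1-\alp)m}\right)^{\frac{\alp}{1-\alp}}\left(1 - e^{-\rho(x - \alp m)}\right),
\end{equation*}
with $\opsidn(x,m) = F(x,m;\rhon(\Rd))$ and $\un(x,m) = F(x,m;\rhod - C/\sqrtn)$ on $\mD$. I would then verify that $F(x,m;\cdot)$ is decreasing in $\rho > 0$ on $\mD$: on $\mD$ we have $x - \alp m \ge 0$ and $(1-\alp)m \ge 0$, so each factor $1 - e^{-\rho a}$ is nonnegative and increasing in $\rho$; raising the first factor to the positive exponent $\alp/(1-\alp)$ and then taking the product with the second preserves monotonicity, so the product increases in $\rho$ and hence $F$ decreases in $\rho$. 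This is the same monotonicity that was invoked at the start of the proof of Proposition \ref{prop:psin_upper}.

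With this in hand, I would invoke inequality \eqref{eq:rhod_le_rhon_Rd} from the proof of Lemma \ref{lem:rhod_le_rhon}, which states that $0 < \rhod - C/\sqrtn < \rhon(\Rd)$ for all $n \ge N$. Combined with the monotonicity of $F$ in $\rho$, this yields $\opsidn(x,m) = F(x,m;\rhon(\Rd)) \le F(x,m;\rhod - C/\sqrtn) = \un(x,m)$ pointwise on $\mD$, and chaining with Lemma \ref{lem:opsidn_upper} delivers $\psidn \le \opsidn \le \un$. There is no genuine obstacle here: all of the substantive work was done in Lemmas \ref{lem:opsidn_upper} and \ref{lem:rhod_le_rhon}, and the remaining step is purely the $\rho$-monotonicity of $F$ on $\mD$, which is routine.
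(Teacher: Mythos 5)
Your proof is correct and follows essentially the same route as the paper: chain Lemma \ref{lem:opsidn_upper} with the $\rho$-monotonicity of the common parametric family and the bound $0 < \rhod - C/\sqrtn < \rhon(\Rd)$ from \eqref{eq:rhod_le_rhon_Rd}. The only difference is that you spell out the monotonicity check that the paper declares ``straightforward,'' which is a harmless elaboration.
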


\begin{proof}
It is straightforward to show that $\opsidn$ in \eqref{eq:opsidn} decreases with respect to $\rhon(\Rd) > 0$ on $\mD$.  Thus, if we replace $\rhon(\Rd)$ in $\opsidn$'s definition with a (positive) parameter less than $\rhon(\Rd)$, then we get a function that is an upper bound of $\opsidn$.  That is exactly how we defined $\un$ in \eqref{eq:un} because, from inequality \eqref{eq:rhod_le_rhon_Rd} in the proof of Lemma \ref{lem:rhod_le_rhon}, we know $ 0 < \rhod - C/\sqrtn < \rhon(\Rd)$ for all $n \ge N$; thus, we have, from Lemma \ref{lem:opsidn_upper},
\[
\psidn \le \opsidn \le \un,
\]
on $\mD$.
\end{proof}

The following theorem is the main result of this paper, and it fully justifies using the optimal retention function for the diffusion approximation in the classical Cram\'er-Lundberg model when minimizing the probability of drawdown.

\begin{theorem}\label{thm:Rd_opt}
Suppose \eqref{eq:Zd} holds, and let $C' > 0$ and $N' > 0$ be as in Theorem {\rm \ref{thm:psin_lim}}.  Then, for all $n \ge N'$ and $(x, m) \in \mD$,
\begin{equation}\label{eq:Rd_opt}
\big| \psin(x, m) - \psidn(x, m) \big| \le \dfrac{2C'}{\sqrtn}.
\end{equation}
\end{theorem}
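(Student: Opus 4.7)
The plan is to observe that the difficult half of the inequality is trivial: since $\psin$ is defined as an infimum over $n$-admissible strategies and the constant strategy $\{\Rdn\}_{t\ge 0}$ is one such strategy, we have $\psin \le \psidn$ on $\mD$. Consequently $|\psin - \psidn| = \psidn - \psin$, and only the upper bound on $\psidn - \psin$ requires work.

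For the upper bound, I would chain together the previously established estimates. Proposition \ref{prop:psidn_upper} gives $\psidn \le \un$ for $n \ge N$, so
\[
0 \le \psidn - \psin \le \un - \psin = (\un - \psid) + (\psid - \psin).
\]
The second summand is bounded by $C'/\sqrtn$ for $n \ge N'$ as an immediate consequence of Theorem \ref{thm:psin_lim}. For the first summand, I would recall that $\un - \psid$ is exactly the right-hand side of \eqref{ineq:lim1} whose size is controlled by the two estimates \eqref{ineq:lim2_1} and \eqref{ineq:lim4} in the proof of Theorem \ref{thm:psin_lim}; summing those two bounds produces precisely the quantity $\max\bigl(\frac{\alp}{1-\alp},1\bigr)\cdot\frac{C}{\rhod\sqrtn} + 2e^{-1}\cdot\frac{C}{(\rhod - C/\sqrt{M'})\sqrtn}$, which is $\le C'/\sqrtn$ by the very definition of $C'$ in the proof of Theorem \ref{thm:psin_lim}. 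Adding the two $C'/\sqrtn$ contributions yields \eqref{eq:Rd_opt}.

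There is essentially no obstacle: all the hard analytic work (establishing the adjustment-coefficient bound $\rhod - C/\sqrtn < \rhon(\Rd)$, verifying the sub- and supersolution conditions, and bounding the exponential differences) has already been performed in Lemmas \ref{lem:rhon_le_rhod}--\ref{lem:rhod_le_rhon}, Propositions \ref{prop:psin_lower}--\ref{prop:psin_upper}, Proposition \ref{prop:psidn_upper}, and Theorem \ref{thm:psin_lim}. The only bookkeeping item is noticing that the same $\un$ produced in Proposition \ref{prop:psin_upper} also dominates $\psidn$ by Proposition \ref{prop:psidn_upper}, a fact that is natural because both upper bounds ultimately rely on $\rhod - C/\sqrtn$ being less than the relevant adjustment coefficient (either $\rhon$ or $\rhon(\Rd)$, both of which exceed $\rhod - C/\sqrtn$ by Lemma \ref{lem:rhod_le_rhon}). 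Hence the factor of $2$ in front of $C'/\sqrtn$ arises simply from the triangle-inequality splitting $\un - \psin = (\un - \psid) + (\psid - \psin)$, with each piece contributing one copy of $C'/\sqrtn$.
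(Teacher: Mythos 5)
Your proposal is correct and follows essentially the same route as the paper's proof: both use the suboptimality inequality $\psin \le \psidn$, the domination $\psidn \le \un$ from Proposition \ref{prop:psidn_upper}, the bound $\un - \psid \le C'/\sqrtn$ from the proof of Theorem \ref{thm:psin_lim}, and a triangle-inequality split through $\psid$ to produce the factor $2$. The only cosmetic difference is bookkeeping: you write $\psidn - \psin \le (\un - \psid) + (\psid - \psin)$ directly, whereas the paper first isolates $|\psid - \psidn| \le C'/\sqrtn$ as an intermediate estimate before combining with $|\psin - \psid| \le C'/\sqrtn$.
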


\begin{proof}
From the suboptimality of using $\Rdn$, we have $\psin \le \psidn$ on $\mD$.  From Proposition \ref{prop:psidn_upper}, we have $\psidn \le \un$ for $n \ge N$.  Thus, from the proof of Theorem \ref{thm:psin_lim}, we deduce
\[
- \, \dfrac{C'}{\sqrtn} \le \psin - \psid \le \psidn - \psid \le \un - \psid \le \dfrac{C'}{\sqrtn},
\]
on $\mD$ for $n \ge N'$; recall $N' \ge N$.  Thus,
\[
\big| \psid - \psidn \big| \le \frac{C'}{\sqrtn}
\]
on $\mD$ for $n \ge N'$.  This inequality, together with \eqref{eq:psin_lim} and the triangle inequality, proves \eqref{eq:Rd_opt}.
\end{proof}


\begin{thebibliography}{99}


\setlength{\itemsep}{0.0mm}

\bibitem{ABY16a} Angoshtari, Bahman, Erhan Bayraktar, and Virginia R. Young (2016a). Optimal investment to minimize the probability of drawdown. {\it Stochastics}, 88(6): 946-958.

\bibitem{ABY16b} Angoshtari, Bahman, Erhan Bayraktar, and Virginia R. Young (2016b).  Minimizing the probability of lifetime drawdown under constant consumption. {\it Insurance: Mathematics and Economics}, 69: 210-223.

\bibitem{A1984} Asmussen, S\o ren (1984). Approximation for the probability of ruin within finite time. {\it Scandinavian Actuarial Journal}, 1984(1): 31-57.

\bibitem{AM2014} Azcue, Pablo and Nora Muler (2014). {\it Stochastic Optimization in Insurance: A Dynamic Programming Approach}. Springer. New York, New York, USA.

\bibitem{B2004} B\"auerle, Nicole (2004). Approximation of optimal reinsurance and dividend pay-out policies. {\it Mathematical Finance}, 14(1): 99-113.

\bibitem{CLLL15} Chen, Xinfu, David Landriault, Bin Li, and Dongchen Li (2015). On minimizing drawdown risks of lifetime investments. {\it Insurance: Mathematics and Economics}, 65: 46-54.

\bibitem{CY2020} Cohen, Asaf and Virginia R. Young (2020). Rate of convergence of the probability of ruin in the Cram\'er-Lundberg model to its diffusion approximation. {\it Insurance: Mathematics and Economics}, 93: 333-340.

\bibitem{CY2021} Cohen, Asaf and Virginia R. Young (2021). Optimal dividend problem: asymptotic analysis. {\it SIAM Journal of Financial Mathematics}, 12(1): 29-46.

\bibitem{CK1995} Cvitani\'c, Jaksa and Ioannis Karatzas (1995). On portfolio optimization under ``drawdown'' constraints. {\it IMA Lecture Notes in Mathematical Applications} 65: 77-88.

\bibitem{ET2008} Elie, Romuald, and Nizar Touzi. (2008). Optimal lifetime consumption and investment under a drawdown constraint. {\it Finance and Stochastics} 12(3), 299-330.

\bibitem{G1977} Grandell, Jan (1977). A class of approximations of ruin probabilities. {\it Scandinavian Actuarial Journal}, 1977(1): 37-52.

\bibitem{GZ1993} Grossman Sanford J. and Zhou Zhongquan. (1993). Optimal investment strategies for controlling drawdowns. {\it Mathematical Finance}, 3(3), 241-276.

\bibitem{HS2004}  Hald, Morten and Hanspeter Schmidli (2004). On the maximization of the adjustment coefficient under proportional reinsurance. {\it ASTIN Bulletin}, 34(1): 75-83.

\bibitem{HLY2020} Han, Xia, Zhibin Liang, and Virginia R. Young (2020). Optimal reinsurance to minimize the probability of drawdown under the mean-variance premium principle. {\it Scandinavian Actuarial Journal}, 2020(10): 879-903.

\bibitem{I1969} Iglehart, Donald L. (1969). Diffusion approximations in collective risk theory. {\it Journal of Applied Probability}, 6(2): 285-292.

\bibitem{LLY2020} Liang, Xiaoqing, Zhibin Liang, and Virginia R. Young (2020).  Optimal reinsurance under the mean-variance premium principle to minimize the probability of ruin.  {\it Insurance: Mathematics and Economics}, 92: 128-146.

\bibitem{LY2021} Liang, Xiaoqing and Virginia R. Young (2021). Discounted probability of exponential Parisian ruin: diffusion approximation, to appear, {\it Journal of Applied Probability}.

\bibitem{LG2007} Liang, Zhibin and Junyi Guo (2007). Optimal proportional reinsurance and ruin probability. {\it Stochastic Models}, 23(2): 333-350.

\bibitem {RSST1999} Rolski, Tomasz, Hanspeter Schmidli, Volker Schmidt, and Jozef L. Teugels (1999). {\it Stochastic Process for Insurance and Finance}, John Wiley \& Sons. Chichester, England.

\bibitem{S2017} Schmidli, Hanspeter (2017). {\it Risk Theory}. Springer Actuarial Notes. Springer. Cham, Switzerland.

\end{thebibliography}
\end{document}